\documentclass[11pt,a4paper]{article}
\usepackage[T1]{fontenc}
\usepackage[utf8]{inputenc}
\usepackage{amsmath,amssymb,amsfonts,amsthm,mathtools}
\usepackage[margin=3cm]{geometry}
\usepackage[colorlinks, citecolor=Green,linkcolor=blue, urlcolor = blue]{hyperref}
\usepackage{enumitem}
\usepackage[dvipsnames]{xcolor}
\usepackage{esint}
\usepackage{setspace}

\usepackage[colorinlistoftodos,prependcaption]{todonotes}

\newtheorem{trm}{Theorem}
\newtheorem{prop}[trm]{Proposition}
\newtheorem{lemma}[trm]{Lemma}

\theoremstyle{definition}
\newtheorem{defin}[trm]{Definition}
\newtheorem{rmk}[trm]{Remark}

\newcommand{\p}{\partial}
\newcommand{\eps}{\varepsilon}
\newcommand{\R}{\mathbb R}
\newcommand{\N}{\mathbb N}

\newcommand{\dV}{\,dV_g}

\makeatletter
\def\Xint#1{\mathchoice
  {\XXint\displaystyle\textstyle{#1}}%
  {\XXint\textstyle\scriptstyle{#1}}%
  {\XXint\scriptstyle\scriptscriptstyle{#1}}%
  {\XXint\scriptscriptstyle\scriptscriptstyle{#1}}%
  \!\int}
\def\XXint#1#2#3{{\setbox0=\hbox{$#1{#2#3}{\int}$}
  \vcenter{\hbox{$#2#3$}}\kern-.5\wd0}}
\makeatother
\renewcommand{\fint}{\Xint-}

\newcommand{\loc}{\mathrm{loc}}
\newcommand{\diam}{\mathrm{diam}}
\newcommand{\dist}{\operatorname{dist}}

\newcommand{\inj}{\operatorname{inj}}
\newcommand{\Vol}{\operatorname{Vol}}

\usepackage[backend=biber,style=alphabetic, doi=false, url=false, giveninits=true, sorting=nyt, maxbibnames=99, maxcitenames=99]{biblatex}
\newcommand{\Addresses}{{
  \bigskip
  \footnotesize

  \textsc{Andrea Malchiodi}\par\nopagebreak
  Scuola Normale Superiore, Piazza dei Cavalieri 7, 56126 Pisa, Italy \par\nopagebreak
  \textit{E-mail address}: \texttt{andrea.malchiodi@sns.it}

  \medskip

  \textsc{Francesco Malizia}\par\nopagebreak
  Scuola Normale Superiore, Piazza dei Cavalieri 7, 56126 Pisa, Italy \par\nopagebreak
  \textit{E-mail address}: \texttt{francesco.malizia@sns.it}

\medskip

  \textsc{Luca Martinazzi}\par\nopagebreak
  Dipartimento di Matematica, Sapienza Università di Roma, Piazzale Aldo Moro 5 - 00185 Roma, Italy \par\nopagebreak
  \textit{E-mail address}: \texttt{luca.martinazzi@uniroma1.it}

}}

\author{Andrea Malchiodi, Francesco Malizia and Luca Martinazzi}

\title{Quantization for Palais--Smale sequences of the Liouville functional on closed surfaces}

\date{}
\begin{document}
\maketitle

\begin{abstract}
    In this paper we prove a quantization property for Palais-Smale 
    sequences of functionals related to Liouville equations on compact surfaces. 
    While these equations have been extensively studied over the past decades, 
    such a quantization property had not been established so far, and has been often bypassed by the Struwe monotonicity trick introduced in \cite{struwe-Acta-1988-monotonicitytrick}. Our result, which relies on careful $L^p$ estimates to obtain gradient bounds and  
    to perform neck analysis, allows to directly apply 
    variational methods to obtain existence of solutions in non-resonant regimes. 
    We also give an example of clustering of blow-up points and a stronger assumption that, on the contrary, guarantees that the blow-up points are isolated.

\vspace{3ex}
			
			\noindent{\it Key Words:} Liouville equations, Palais-Smale sequences, Blow-up, quantization.

			\noindent{{\bf MSC 2020: }35B44, 35J61, 35B33, 35R01. }

\end{abstract}

\begin{spacing}{0.75}
    {\tableofcontents}
\end{spacing}

\section{Introduction and statement of the main results}

Given a closed Riemannian surface $(M,g)$ which we assume, only for simplicity, to satisfy 
\[
        \Vol_g(M)=1, 
\]
a positive function $h\in C^1(M)$ and $\lambda > 0$, we consider the 
following mean field equation of Liouville type  
    \begin{equation}\label{Liouv}
 -\Delta_g u
 =\lambda \left(\frac{h e^{u}}{\int_M h e^{u}\dV}-1\right) 
 \qquad \hbox{ on } M.
\end{equation}
Problem \eqref{Liouv} has interest both in differential geometry and in mathematical physics. 

In geometry, it is motivated by the classical Kazdan-Warner problem of prescribing the scalar curvature of a manifold via 
conformal deformations of the metric \cite{kazdanwarner-Annals-1974}, \cite{kazdanwarner-Annals-1975}. In the particular case of 
$(S^2, g_{S^2})$, the Nirenberg problem regards the prescription 
of a given (non-constant) assigned Gaussian curvature on the sphere. 

In mathematical physics, \eqref{Liouv} arises as a
mean field equation of Euler flows as well as in the description of
self-dual condensates of some Chern-Simons-Higgs model: we refer for
brevity only to \cite{cagliotilionsmarchioropulvirenti1992}, \cite{cagliotilionsmarchioropulvirenti1995}, \cite{tarantellobook} and the references therein.

\medskip

Problem \eqref{Liouv} possesses an interesting variational structure: 
its solutions can indeed be found as critical points of the functional
\begin{equation}\label{defJlambda}
J_\lambda(u):=\frac{1}{2}\int_M |\nabla_g u|^2 dV_g +\lambda\int_M u\, dV_g - \lambda\log\left(\int_M he^u dV_g   \right).
\end{equation}
A natural functional space to work with is 
\[
H^1(M) := \left\{ u : M \to \R \; : \; u, \nabla_g u \in L^2 \right\}.  
\]
In fact the  Moser-Trudinger inequality, whose sharp form can be 
found in \cite{moser-mosertrudinger} (see also \cite{fontana-mosertrudinger-manifolds}), implies that for every $u \in H^1(M)$ 
the function $e^u$ is of class $L^p$ for all $p \geq 1$ 
and that the map $u \mapsto \int_M h e^u dV_g$ is of class $C^\infty$ 
from $H^1(M)$ to $\R$, as well as for $u \mapsto J_\lambda(u)$. 

The geometric structure of $J_\lambda$ strongly depends on   
the parameter $\lambda$ and changes in particular, when $\lambda$ increases from zero to infinity, as it crosses  multiples of $8 \pi$. A consequence of 
\cite{moser-mosertrudinger} is that $J_\lambda$, up to a normalization as in \eqref{normalization}, is coercive when $\lambda < 8 \pi$: being $J_\lambda$ also weakly lower semicontinuous, by the 
direct methods of the calculus of variations one can easily find 
the existence of a minimizer. In the borderline case $\lambda = 8 \pi$ 
the minimization problem is more delicate: it has been analyzed by studying the asymptotic behavior of a family of minimizers corresponding to a 
sequence $\lambda_n \nearrow 8 \pi$, whose convergence depends on the metric $g$ and the function $h$, see e.g. \cite{djlwasian-97} and \cite{nolasco-tarantello-1998-ARMA}.

When $\lambda > 8 \pi$ it can be proven that the topology of some sub-level sets of $J_\lambda$ is non trivial, see \cite{dingjostliwang-AIHP-1999}, \cite{djadli2008allgenus}, \cite{tarantellostruwe} as well as  \cite{djadlimalchiodi-Annals-2008qcurvature} for a related geometric problem concerning $Q$-curvature, a conformal higher-order version of Gaussian curvature. This fact allows to 
employ min-max theory to try deriving existence of solutions. Min-max schemes however produce in general {\em Palais-Smale sequences}, namely sequences of approximate solutions with 
converging Euler-Lagrange energy. Then, while the Palais-Smale (PS) condition, namely pre-compactness 
of Palais-Smale sequences, would guarantee existence of true critical points, it is also known that blowing-up Palais-Smale sequences exist for $\lambda\in 8\pi \mathbb{N}\setminus\{0\}$, see e.g. \cite{AhmedouBenAyed2023}, or Theorem \ref{trmclusterexample}. 
Surprisingly, for $\lambda\not\in 8\pi \mathbb{N}\setminus\{0\},$ the validity of the Palais-Smale condition for $J_\lambda$ was not known so far, with the 
exception of the recent paper \cite{malizia-2025-ACV} by the second author, where a {\em stability} condition analogous to a  Morse index bound is assumed.
It is 
the purpose of this paper to show that the PS condition does indeed hold in general when $\lambda$ is not a positive multiple of $8 \pi$.

We point out that, however, the lack of knowledge about the PS condition has been 
circumvented using a celebrated {\em monotonicity trick} from \cite{struwe-Acta-1988-monotonicitytrick}, which allows to find {\em bounded} PS sequences, and hence solutions, for almost every value of $\lambda$. The compactness results of \cite{lishafrir-1994} and \cite{li-1999-CMP-harnacktype} lead to uniform bounds on such solutions, provided that $\lambda$ lies in a 
compact set of $\R \setminus 8 \pi \N$. With such bounds at hand, following a 
suggestion by Nirenberg, it was possible 
to compute the Leray-Schauder degree of \eqref{Liouv} in \cite{li-1999-CMP-harnacktype} and \cite{chenlin-CPAM-2003-topdegree} via the refined blow-up analysis from 
\cite{chenlin-cpam02}. 

\medskip

We can now state our main result, for which we need some preliminary facts. 
Consider a Palais--Smale sequence $(\lambda_n,\tilde u_n)\subset [0,+\infty)\times H^1(M)$ for $J_{\lambda}$ in the following sense: $\lambda_n\to\lambda_\infty\in [0,+\infty)$ and there exists $R_n\to0$ in $H^{-1}(M)$ such that
\begin{equation}\label{PSbar}
 -\Delta_g\tilde u_n
 =\lambda_n\left(\frac{he^{\tilde u_n}}{\int_M he^{\tilde u_n}\dV}-1\right)+R_n.
\end{equation}
This is equivalent to asking $J'_{\lambda_n}(\tilde{u}_n)\to 0 $ in $H^{-1}(M)$. Usually in the definition of Palais--Smale sequences one additionally requires that $J_{\lambda_n}(u_n)\to c\in \R$, but we will not use this condition.
Using the invariance of $J_\lambda$ under addition of constants, we can define
\[u_n:=\tilde u_n-\log\int_M he^{\tilde u_n}\dV,\]
which satisfies the simpler equation 
\begin{equation}\label{PSeq}
 -\Delta_g u_n=\lambda_n(he^{u_n}-1)+R_n,
        \qquad R_n\to0\quad\hbox{in }H^{-1}(M),
\end{equation}
and the normalization
\begin{equation}\label{normalization}
\int_M he^{u_n}\dV=1.
\end{equation}

\begin{defin}
We say that a sequence $(u_n)\subset H^1(M)$ of solutions to \eqref{PSeq}-\eqref{normalization} {\em blows up} if it is not precompact in $H^1(M)$. 
\end{defin} 

It is well-known that every blowing-up normalized sequence of \eqref{PSeq} exhibits a phenomenon of mass concentration, see Lemma~\ref{initialconcentration} below.
Our main result show that such concentration is quantized.

\begin{trm}\label{trmquant}
Assume that a sequence $(u_n)$ of solutions to \eqref{PSeq}, satisfying the normalization \eqref{normalization}, 
blows up. Then, there exist points $a_1,\dots, a_N\in M$ and positive integers $m_1,\dots,m_N$ such that, up to a subsequence,
\begin{equation}\label{convdelta}
\lambda_nhe^{u_n}\dV \rightharpoonup \sum_{\alpha=1}^N 8\pi m_\alpha\delta_{a_\alpha}
        \qquad\hbox{weakly as measures on }M.
\end{equation}
In particular,
\[\lambda_n\to  \lambda_\infty= 8\pi \sum_{\alpha=1}^N m_\alpha.\]
Hence, $J_\lambda$ satisfies the Palais-Smale condition 
if $\lambda$ is not a positive integer multiple of $8 \pi$.
\end{trm}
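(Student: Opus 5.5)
Our strategy is the usual blow-up analysis. The difficulty is that the error $R_n$ is only small in $H^{-1}$, so pointwise Harnack-type arguments (Brezis--Merle, Li--Shafrir) do not apply directly. We replace them throughout with $L^p$ estimates.

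\textbf{Step 1: concentration and weak limit.} By Lemma~\ref{initialconcentration} and the normalization \eqref{normalization}, the measures $\mu_n:=\lambda_n h e^{u_n}\dV$ have bounded mass. Up to a subsequence they converge weakly to a measure $\mu$. The set $S=\{a:\mu(\{a\})\ge 4\pi\}$ is finite and nonempty.

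Away from $S$, a Brezis--Merle type estimate gives uniform $L^p_{\loc}$ bounds on $e^{u_n}$ for some $p>1$. Here we split $u_n=v_n+w_n$, where
\[
-\Delta_g w_n=R_n,
\]
so $w_n\to0$ in $H^1$, and by Moser--Trudinger $e^{|w_n|}$ is bounded in every $L^q$. Combined with the $H^{-1}$ smallness of $R_n$, these bounds imply $u_n$ is locally bounded in $H^1$ off $S$ after subtracting means. We also exclude the alternative $u_n\to-\infty$ locally uniformly away from $S$, and show that $\mu$ is purely atomic on $S$. Using the weak limit of \eqref{PSeq}, any absolutely continuous part of $\mu$ would force a limiting equation incompatible with total mass $\lambda_\infty$ and with the blow-up assumption. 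This is the standard alternative: either compactness, or $u_n\to-\infty$ off $S$ with $\mu=\sum_\alpha \beta_\alpha\delta_{a_\alpha}$.

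\textbf{Step 2: bubble extraction.} Fix $a\in S$ and rescale at a point $x_n$ of maximal concentration with scale $r_n$ chosen so that the rescaled mass in a unit ball is fixed. The rescaled remainder $\tilde R_n$ still tends to zero in $H^{-1}_{\loc}$, since the Dirichlet energy of $w_n$ is conformally invariant. The rescaled functions therefore converge to a finite-mass solution of
\[
-\Delta U=\lambda_\infty h(a)e^U \quad\text{on }\R^2.
\]
By Chen--Li classification this bubble carries mass exactly $8\pi$. Iterating with the usual bubble-tree induction (a finite number of bubbles, since each takes $8\pi$ and the total mass is bounded) gives a finite set of bubbles at $a$.

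\textbf{Step 3: neck analysis (main obstacle).} We must show that no mass is lost in the annular regions between bubbles, nor between the outermost bubble and a fixed scale. On a neck $A_n=B_{\delta}\setminus B_{Rr_n}$, we plan to use the radial/average decomposition and the Pohozaev identity.

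The key claim is a decay estimate: the tangential oscillation of $u_n$ on the neck is small, and the radial average $\bar u_n(r)$ satisfies
\[
\bar u_n(r)\le -(2+\eta)\log r + C
\]
once the enclosed mass exceeds $4\pi$ (in normalized units), which makes $\int_{A_n}e^{u_n}$ small. The error $R_n$ enters only through gradient terms in $L^2$. We control it by proving $\|\nabla u_n\|_{L^{2,\infty}}$ bounds, or $L^p$ bounds for $p<2$ on $\nabla u_n$, from the equation with bounded measure data plus the $H^{-1}$ term. We then run the Pohozaev identity on dyadic annuli, where the $R_n$ contribution is $o(1)$ after summing thanks to the $H^{-1}$ smallness.

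We expect this uniform gradient $L^p$ control on necks to be the hardest point. A failure there would allow fractional mass in necks. The Pohozaev balance shows the mass on each scale is either $0$ or $8\pi$ times an integer, which gives $\beta_\alpha=8\pi m_\alpha$.

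\textbf{Step 4: conclusion.} Testing \eqref{PSeq} with $1$ shows $\mu(M)=\lambda_\infty$, because $\langle R_n,1\rangle\to0$. Hence $\lambda_\infty=8\pi\sum_\alpha m_\alpha$. Finally, if $\lambda\notin 8\pi\N$ and a PS sequence for $J_\lambda$ is normalized as in \eqref{normalization}, it cannot blow up, hence it is precompact in $H^1$. This is the Palais--Smale condition.
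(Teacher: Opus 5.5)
Your overall architecture (concentration, bubble extraction, neck analysis, exclusion of diffuse mass) is the right one. However, Step~3, which is the heart of the theorem, does not work as planned.

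\textbf{Pohozaev cannot be used here.} A Pohozaev identity requires testing \eqref{PSeq} with $x\cdot\nabla u_n$. The pairing $\langle R_n, x\cdot\nabla u_n\rangle$ is not controlled by $\|R_n\|_{H^{-1}}$, because $x\cdot\nabla u_n$ is not bounded in $H^1$. It need not even lie in $H^1$, since $u_n-v_n$ is controlled only in $H^1$, and its bubble part has $H^1$-norm blowing up with the scale ratio. Passing to $v_n$ does not help: $\int\lambda_nhe^{u_n}\,x\cdot\nabla v_n$ integrates by parts only up to the defect $\int\lambda_nhe^{u_n}\,x\cdot\nabla w_n$, which you cannot control.

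This defect is genuinely not small. If Pohozaev held up to $o(1)$ on fixed small balls around a blow-up point, the classical computation would give $\sigma(\sigma-8\pi)=0$ for the local mass $\sigma$, i.e.\ $m_\alpha=1$. Yet Theorem~\ref{trmclusterexample} produces Palais--Smale sequences with local mass $16\pi$. So a ``Pohozaev balance'' cannot be what yields $8\pi m_\alpha$; in the paper it is used only in Section~\ref{sec:isolation}, under \eqref{strongresidual}.

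\textbf{The other neck claims also fail.} Small \emph{pointwise} tangential oscillation of $u_n$ on necks is exactly the information lost when $R_n$ is only small in $H^{-1}$. Decay of the circular mean $\bar u_n(r)$ alone does not make $\int_{A_n}e^{u_n}$ small, since Jensen goes the wrong way. A global $L^{2,\infty}$ or $L^q$ ($q<2$) bound on $\nabla u_n$ does not bound $\int_{A_{r,2r}}|\nabla u_n|^2$ on individual dyadic annuli; it is compatible with mass concentrating inside the neck.

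\textbf{What the paper uses instead.}
\begin{enumerate}
\item[(a)] It tests \eqref{PSeq} with logarithmic cut-offs $\varphi_{s,r}$: these equal $1$ on $B^g_s$ and $\log(r/d)/\log(r/s)$ on the annulus. Their Dirichlet energy is $\le C/\log(r/s)$, so $\langle R_n,\varphi_{s,r}\rangle=o(1)$ uniformly in the radii. This directly gives the flux inequality $\bar u_n(r)\le\bar u_n(s)-2\nu\log(r/s)$ with $\nu>1$, as soon as the enclosed mass exceeds $4\pi\nu_1>4\pi$ (Lemma~\ref{lemaverage}).
\item[(b)] It converts this into geometric decay of $\int_{A^g_{r,2r}}e^{u_n}$ via a reverse Jensen inequality, obtained from Moser--Trudinger on the rescaled annulus. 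That step needs a uniform bound on $\int_{A^g_{r,2r}}|\nabla_g u_n|^2$. This comes from the pointwise estimate $d_n|\nabla_g v_n|\le C$ of Proposition~\ref{propgradient}, plus $w_n\to0$ in $H^1$.
\item[(c)] The gradient estimate rests on the scale-invariant bound \eqref{nofurther} with $p>2$, which is precisely the stopping criterion of the bubble extraction. Your Step~2 iterates on mass but never states what holds away from the bubbles when it stops, so this input is missing.
\end{enumerate}
Clustered bubbles also require running the neck estimate at cluster scales, inductively along the cluster tree, as in Proposition~\ref{propneck}.

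\textbf{Smaller points.} The convergence statements in Steps~1--2 should be made for $v_n$ (or for $v_n+c_{i,n}$, where the normalizing constants may be unbounded). They should then be transferred to $e^{u_n}$ via Lemmas~\ref{exppqM}--\ref{exppqmean}. Step~1 as written also both excludes and asserts the alternative $u_n\to-\infty$.
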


Theorem \ref{trmquant} will follow from blow-up analysis, in particular extracting \emph{bubbles} which solve the Liouville equation
\begin{equation}\label{Liouville}
-\Delta V= \mu \, e^V\quad \text{in }\R^2,\quad \mu>0,    
\end{equation}
and proving that neither mass concentrates  in the necks, nor in the weak limit. The quantum $8\pi$ of concentration then follows from the well-known classification result:

\begin{prop}[\cite{liouville1853}, \cite{chen-li-Duke-1991-classification}]\label{classification}
All solutions to \eqref{Liouville} with integrable right-hand side have the form
\begin{equation}\label{solLiouville}
V(x)=\log \left(\frac{8\delta^2}{\mu(\delta^2+|x-x_{0}|^2)^2}\right),\quad \text{for some }x_0\in \R^2,\, \delta>0,
\end{equation}
and in particular satisfy
\begin{equation}\label{liouvol}
\int_{\R^2} \mu \, e^V dx=8\pi.
\end{equation}
\end{prop}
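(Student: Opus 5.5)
The statement to prove is Proposition~\ref{classification}, the classical Liouville/Chen--Li classification. I would follow the Chen--Li strategy. It has three steps: show that $V$ has logarithmic decay at infinity, use the method of moving planes to get radial symmetry, and then integrate the resulting ODE.

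\emph{Step 1: asymptotics.} Let $V$ solve $-\Delta V=\mu e^V$ in $\R^2$ with $\int_{\R^2} e^V<\infty$. By the Brezis--Merle inequality applied on balls, $V^+\in L^\infty(\R^2)$. Hence $e^V\in L^1\cap L^\infty$. Define
\[
w(x)=\frac{\mu}{2\pi}\int_{\R^2}\log\frac{|x-y|+1}{|y|+1}\, e^{V(y)}\,dy .
\]
Then $-\Delta (V+w)=0$, and $V+w\le C+C\log(|x|+2)$. Liouville's theorem for harmonic functions with logarithmic growth then gives $V+w\equiv \mathrm{const}$. Standard estimates on $w$ yield
\[
V(x)=-\frac{\beta}{2\pi}\log|x|+O(1)\quad\text{as }|x|\to\infty,\qquad \beta:=\int_{\R^2}\mu e^V .
\]
Integrability of $e^V$ forces $\beta\ge 4\pi$. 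A bootstrap then gives the sharper expansion
\[
V=-\frac{\beta}{2\pi}\log|x|+O(1),\qquad x\cdot\nabla V\to-\frac{\beta}{2\pi}.
\]

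\emph{Step 2: symmetry and mass.} With this decay, the moving plane method applies in every direction. Comparing $V$ with its reflection across $\{x_1=\lambda\}$ and using the maximum principle for $-\Delta-\mu e^{\xi}$, which holds at infinity thanks to the decay, shows that $V$ is radially symmetric about some $x_0$. Alternatively, the Pohozaev identity on $B_R(x_0)$ with $R\to\infty$, using the gradient asymptotics of Step 1, gives directly $\beta(\beta-8\pi)=0$, so $\beta=8\pi$. This proves \eqref{liouvol}.

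\emph{Step 3: explicit form.} For the radial solution, set $t=\log r$. The ODE becomes $\ddot\phi=-\mu e^{\phi}$ with $\phi=V+2t$. This has the first integral
\[
\tfrac12\dot\phi^2+\mu e^{\phi}=E .
\]
Integrating it, with regularity at $r=0$ requiring $\dot\phi\to 2$, gives exactly \eqref{solLiouville}. One can also check that the explicit functions in \eqref{solLiouville} solve the equation, and the ODE uniqueness ensures there are no others.

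The main obstacle is Step 1, specifically the sharp asymptotics needed to justify both the moving planes at infinity and the boundary terms in the Pohozaev identity. Here I would rely on the Brezis--Merle $L^\infty$ bound and on careful potential estimates of $w$.
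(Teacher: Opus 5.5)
Your outline is the Chen--Li proof, which is all the paper relies on: it states the proposition with a citation to Liouville and Chen--Li and gives no argument of its own. Steps 2 and 3 are fine. With $E=2$ the first integral gives $e^{\phi}=\frac{2}{\mu}\cosh^{-2}(t-t_0)$, which is \eqref{solLiouville} with $\delta=e^{t_0}$. Step 1, however, which you rightly single out as the crux, has two concrete defects.

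\emph{The kernel is wrong.} Since $\Delta\log(|z|+1)=\frac{1}{|z|(1+|z|)^2}$ is an integrable function of total integral $2\pi$, not $2\pi\delta_0$, the $w$ you define satisfies $\Delta w(x)=\frac{\mu}{2\pi}\int_{\R^2}\frac{e^{V(y)}\,dy}{|x-y|(1+|x-y|)^2}\neq\mu e^{V(x)}$. So $V+w$ is not harmonic and the Liouville step fails. You need $w(x)=\frac{\mu}{2\pi}\int_{\R^2}\log\frac{|x-y|}{|y|+1}\,e^{V(y)}\,dy$.

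\emph{The asymptotic argument is circular.} You assert $V=-\frac{\beta}{2\pi}\log|x|+O(1)$ before knowing anything about $\beta$, and then extract only $\beta\ge4\pi$. But an $O(1)$ remainder requires $\int_{\R^2}\log(1+|y|)\,e^{V(y)}\,dy<\infty$, hence $\beta>4\pi$ strictly; with $\beta\ge4\pi$ alone the bootstrap does not close. The missing observation is the one-sided bound $|x-y|\le(|x|+1)(|y|+1)$. It gives $w(x)\le\frac{\beta}{2\pi}\log(|x|+1)$, hence $V\ge-\frac{\beta}{2\pi}\log(|x|+1)-C$, so integrability of $e^V$ forces $\beta>4\pi$. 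After that, $V/\log|x|\to-\frac{\beta}{2\pi}$ gives $e^V\le C|x|^{-2-\varepsilon}$. From this you get the $O(1)$ expansion and the full gradient asymptotics $\nabla V=-\frac{\beta}{2\pi}\frac{x}{|x|^2}+o(|x|^{-1})$. You need the tangential component too, not only $x\cdot\nabla V$, to kill the Pohozaev boundary term. The same strict inequality gives $R^2e^{V}\to0$ on $\partial B_R$ in Pohozaev.

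It is simplest to derive $\beta=8\pi$ from Pohozaev before running the moving planes. Then $e^{\xi}=O(|x|^{-4})$, and the usual barrier (e.g.\ $\log(|x|-1)$) gives the maximum principle at infinity.
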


Since the functions $u_n$ cannot a priori satisfy pointwise bounds, due to the fact that  $R_n\to 0$ in $H^{-1}(M)$, we introduce $v_n\in H^1(M)$ to be the unique solution to
\begin{equation}\label{vn}
 -\Delta_gv_n=\lambda_n(he^{u_n}-1),
        \qquad \int_M(v_n-u_n)\dV=0.
\end{equation}
Then, the bubbles are obtained from rescaling $v_n$, rather than $u_n$, and quantization requires inferring information on $e^{u_n}$ from information on $e^{v_n}$.

We will prove the following result, from which Theorem \ref{trmquant} follows at once:

\begin{trm}\label{trmmain}
Assume that a sequence $(u_n)$ of solutions to \eqref{PSeq}, {normalized as in \eqref{normalization}}, blows up. Let $(v_n)$ be as in \eqref{vn}. Then, up to a subsequence, there exist an integer $\ell\ge1$, points $x_{i,n}\in M$ with $x_{i,n}\to x^{(i)}$, radii $r_{i,n}\downarrow0$, and constants $c_{i,n}\in\R$, $i=1,\dots,\ell$, such that the following properties hold.

\begin{enumerate}[label=\textup{(\roman*)}]
\item For each $i$, in geodesic normal coordinates centered at $x_{i,n}$,
\begin{equation}\label{bubbleprofile}
 \widetilde V_{i,n}:= v_n(\exp_{x_{i,n}}(r_{i,n}\cdot ))+2\log r_{i,n}+c_{i,n}
        \longrightarrow V_i
        \quad\hbox{in }C^{1,\alpha}_{\loc}(\R^2)
\end{equation}
for every $\alpha\in(0,1)$, where $V_i$ satisfies
\begin{equation}\label{limitbubble}
 -\Delta V_i=\mu_i e^{V_i}\quad\hbox{in }\R^2,
        \qquad \int_{\R^2}\mu_i e^{V_i}\,dy=8\pi,
\end{equation}
with $\mu_i=\lambda_\infty h(x^{(i)})>0$.

\item  For every $R>0$ there exists $n_0(R)$ such that
\begin{equation}\label{scale-sep}
 B^g_{Rr_{i,n}}(x_{i,n})\cap B^g_{Rr_{j,n}}(x_{j,n})=\emptyset,
        \qquad {\forall}\,i\ne j,\quad {\forall}\,n\ge n_0(R).
\end{equation}

\item For $1\le i\le \ell$
$$\lim_{R\to\infty}\lim_{n\to\infty}\int_{B^g_{Rr_{i,n}}(x_{i,n})}\lambda_n h e^{u_n}dV_g= \lim_{R\to\infty}\lim_{n\to\infty}\int_{B^g_{Rr_{i,n}}(x_{i,n})}\lambda_n h e^{v_n+c_{i,n}}dV_g=8\pi.$$

\item $$\lim_{R\to\infty}\lim_{n\to\infty}\int_{M\setminus \bigcup_{i=1}^\ell B^g_{Rr_{i,n}}(x_{i,n})}\lambda_n h e^{u_n}dV_g =0.$$
\end{enumerate}
\end{trm}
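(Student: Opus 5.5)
The argument is a blow-up analysis carried out on $v_n$ rather than on $u_n$. The bridge between the two is the difference $w_n:=u_n-v_n$, which solves $-\Delta_g w_n=R_n$ with $\int_M w_n\dV=0$. Hence $w_n\to0$ in $H^1(M)$, and by Moser--Trudinger $e^{|w_n|}$ is bounded in every $L^q$ with $\|e^{w_n}-1\|_{L^q}\to0$ for every $q<\infty$. The normalization \eqref{normalization} gives $\|\lambda_n(he^{u_n}-1)\|_{L^1}\le C$, so $v_n-\bar v_n$ is bounded in $W^{1,p}$ for every $p<2$. Moreover, Hölder gives
\[
\int_\Omega e^{v_n}\le \Bigl(\int_\Omega e^{u_n}\Bigr)^{1/p'}\Bigl(\int_\Omega e^{-p w_n}\Bigr)^{1/p}\cdot C,
\]
and symmetrically with $u_n,v_n$ exchanged, which lets us pass $L^1$ mass information between $e^{u_n}$ and $e^{v_n}$ on any set. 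Since $u_n$ blows up, Lemma~\ref{initialconcentration} yields concentration points. Away from them, a Brezis--Merle argument applied to $v_n$ (whose right-hand side is bounded in $L^1$ and small in $L^1$ on small balls) gives $e^{v_n}$ bounded in $L^q_{\loc}$ for some $q>1$, and hence $v_n$ locally bounded above.

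First I would extract the bubbles, by induction. At a concentration point, choose $x_{1,n}$ and $r_{1,n}$ so that $e^{v_n(x_{1,n})}r_{1,n}^2=1$ and $x_{1,n}$ is a maximum point of $v_n$ (or of a Schoen-type selection quantity $v_n+2\log\dist(x,\text{previous centres})$ in later steps). The rescaled $\widetilde V_{i,n}$ satisfy $-\Delta\widetilde V_{i,n}=\lambda_n h\, e^{\widetilde V_{i,n}-c_{i,n}}e^{w_n}\ldots$. More precisely, the right-hand side equals $\lambda_n h(\exp(r\cdot))e^{\widetilde V}$ up to the constant $c_{i,n}$, plus an $O(r_{i,n}^2)$ term. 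The rescaled right-hand side is bounded in $L^\infty_{\loc}$, so elliptic estimates give $C^{1,\alpha}_{\loc}$ convergence to $V_i$ solving \eqref{limitbubble} with finite mass. Proposition~\ref{classification} then gives mass $8\pi$.

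To get (iii) for $e^{u_n}$, I would note that the rescaled $w_n(\exp_{x_{i,n}}(r_{i,n}\cdot))$ is small in $H^1_{\loc}$ by conformal invariance of the Dirichlet energy. By Moser--Trudinger on $B_R$, $e^{\tilde w}\to1$ in $L^q(B_R)$, so the $u_n$-mass on $B_{Rr_{i,n}}$ converges to the same limit. I would iterate the extraction as long as there is a region where $\sup\bigl(v_n+2\log\dist(x,\{x_{j,n}\})\bigr)\to\infty$. Each bubble carries mass $8\pi$, so the process stops after finitely many steps. This yields (i) and (ii), and when the procedure stops it gives the weak Harnack-type bound
\[
v_n(x)+2\log\min_j\dist(x,x_{j,n})+c\le C.
\]

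The main obstacle is the neck estimate (iv): showing that there is no $u_n$-mass in annuli $B_{\delta}(x_{i,n})\setminus B_{Rr_{i,n}}(x_{i,n})$ (and in the more complicated clustered regions) and none in the weak limit. For smooth solutions one uses a Pohozaev identity, but here $R_n$ is only small in $H^{-1}$, so pointwise control is unavailable. My plan is to work on dyadic annuli $A_k=B_{2^{k+1}r}\setminus B_{2^kr}$ for $v_n$. The Harnack bound above, together with Green representation and the $W^{1,p}$ bounds, should show that $v_n$ is nearly radial on each annulus, with slope $-2\pi^{-1}\cdot(\text{enclosed mass})/2$. I would then run a Pohozaev-type identity for $v_n$ on annuli, integrated in the radius, to show that the enclosed mass stays close to $8\pi m$. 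Since $8\pi m>4\pi$, this forces the decay $e^{v_n}\lesssim |x|^{-2-\eps}$ in the neck, so the $v_n$-mass in the neck is summable and small. The $L^p$ estimates of $e^{|w_n|}$, applied on each annulus with constants uniform under scaling (by conformal invariance), then transfer the smallness to $e^{u_n}$ via the Hölder inequality above. In the weak limit, the same argument shows that the regular part has zero mass. Otherwise $u_0$ would solve $-\Delta u_0=\lambda_\infty(he^{u_0}-1)-\sum 8\pi m_\alpha\delta_{a_\alpha}+\ldots$, and the singularity $e^{u_0}\sim|x-a_\alpha|^{-2\cdot 8\pi m/(2\pi)\ldots}$ would be nonintegrable, a contradiction. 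The dichotomy between isolated and clustered bubbles is handled by the standard tree-structure induction on scales.
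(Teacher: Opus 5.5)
Your overall architecture (bubbles read off from $v_n$, dyadic neck decay driven by an enclosed mass $>4\pi$, induction over a cluster tree, Brezis--Merle exclusion of diffuse mass) matches the paper's. There is, however, a genuine gap already at the extraction step: you treat the rescaled equation for $v_n$ as if its nonlinearity were $e^{v_n}$ up to a harmless constant.

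\textbf{Extraction.} After rescaling, $-\Delta_{\hat g_n}\widetilde V_{i,n}=\lambda_nh(F_n)e^{U_n}-\lambda_nr_{i,n}^2$ with $U_n=u_n\circ F_n+2\log r_{i,n}$. Here $U_n-\widetilde V_{i,n}\to0$ only in $H^1_{\loc}$, never pointwise, so the right-hand side is not bounded in $L^\infty_{\loc}$. Worse, the offset $c_{i,n}$ (the average of $u_n-v_n$ at scale $r_{i,n}$) is in general unbounded. Normalizing at a maximum of $v_n$ with $e^{v_n(x_{1,n})}r_{1,n}^2=1$ therefore selects roughly $r_{1,n}\approx r_ne^{c_n/2}$ instead of the true concentration scale $r_n$ of $e^{u_n}$. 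If $c_n\to-\infty$, the rescaled right-hand side tends to $0$ in $L^1_{\loc}$ and the limit is a constant. If $c_n\to+\infty$, the $u_n$-mass collapses to a point of the rescaled picture and there is no compactness.

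The Schoen-type stopping rule based on $\sup(v_n+2\log d_n)$ has the same defect. Near a genuine bubble with $c_n\to-\infty$ one has $v_n+2\log d_n\approx C-c_n\to+\infty$ at distance $\sim r_n$, with no new $u_n$-mass there. So the $8\pi$ count that should stop the iteration breaks down, and any resulting sup-bound on $v_n$ gives no control of $e^{u_n}$.

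Your H\"older transfer is also incorrect as written. $\int_\Omega e^{v_n}=\int_\Omega e^{u_n}e^{-w_n}$ requires $\|e^{u_n}\|_{L^{p'}(\Omega)}$, not $(\int_\Omega e^{u_n})^{1/p'}$, and the global bound on $e^{-pw_n}$ is not scale-invariant on small sets.

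The missing idea is a normalization-free, scale-invariant selection expressed through $u_n$ with integrability above $2$. The paper takes the minimal $r$ with $r^2(\fint_{B^g_r(x)}e^{pu_n}\dV)^{1/p}=\rho_*$, for a fixed $p>2$. This gives uniform $L^p_{\loc}$ bounds on $e^{U_n}$, which in turn give:
\begin{itemize}
\item $W^{2,p}\hookrightarrow C^{1,\alpha}$ compactness, after a Harnack alternative for the harmonic part (with $c_{i,n}$ fixed by averaging $U_n-V_n$);
\item via Lemma~\ref{exppqmean}, the conversion of bounds on $e^{\widetilde V_n}$ into $e^{U_n}-e^{\widetilde V_n}\to0$ in $L^1_{\loc}$;
\item termination with \eqref{nofurther}, from which Green's formula yields $d_n|\nabla_gv_n|\le C$ (this needs $p>2$, so that $|\nabla_xG|\lesssim d_g^{-1}\in L^{p'}$).
\end{itemize}

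\textbf{Necks.} The neck step inherits the same problem. A Pohozaev identity for $v_n$ does not close: $e^{u_n}\nabla v_n=\nabla e^{u_n}-e^{u_n}\nabla w_n$, and the last term is not controlled uniformly along the neck when $w_n\to0$ only in $H^1$. It is also unnecessary, since it suffices that the enclosed mass exceed $4\pi\nu_1>4\pi$ at the inner radius, not that it stay close to $8\pi m$.

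More seriously, rescaled Moser--Trudinger on a dyadic annulus controls only $w_n$ minus its average there. Those averages are unbounded and drift along a neck made of $\sim\log(S_n/s_n)$ annuli, and the drift must be controlled uniformly in that number. Moser--Trudinger also needs $\|\nabla v_n\|_{L^2}$ bounded on each annulus, which is exactly what $d_n|\nabla_gv_n|\le C$ supplies.

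The paper handles both points by testing \eqref{PSeq} against the logarithmic cut-offs $\varphi_{s,r}$. These are bounded in $H^1$ uniformly in $s,r$, so the $H^{-1}$ error is $o(1)$ uniformly in the neck length. This gives $\bar u_n(r)\le\bar u_n(s)-2\nu\log(r/s)$ for the circular averages of $u_n$ itself. Then \eqref{MT2} on rescaled annuli yields geometric decay of $\int_{A^g_{r,2r}}\lambda_nhe^{u_n}\dV$. The clearing at intermediate cluster scales and the exclusion of diffuse mass also rest on \eqref{nofurther}.

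Finally, mind the sign in your last step. The limit of $v_n$ satisfies $-\Delta_gv_0=\mu-\lambda_\infty$ with $\mu(\{a_\alpha\})\ge8\pi m_\alpha$, a positive Dirac mass. Hence $e^{v_0}\gtrsim d_g(\cdot,a_\alpha)^{-4m_\alpha}$, which is what gives non-integrability.
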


Notice that \eqref{convdelta} follows from items (iii) and (iv) above, with
$$m_\alpha=\#\{1\le j\le \ell: x^{(j)}=a_\alpha\}.$$
In fact, the limit points $x^{(1)},\dots, x^{(\ell)}$ in Theorem \ref{trmmain} are not required to be distinct. This is not just a technical issue: differently from exact solutions, for which the Harnack-type inequality of
\cite{li-1999-CMP-harnacktype} forces the blow-up points to be at mutual distance
bounded from below, for Palais--Smale sequences clustering might occur. More precisely, we prove the following.

\begin{trm}\label{trmclusterexample}
Given $x_0\in M$ and $p\in (1,2)$, there exists a sequence $(u_n)\subset H^1(M)$ satisfying
\[
 -\Delta_gu_n=16\pi(he^{u_n}-1)+R_n \quad \text{on $M$},
 \qquad \int_{M}he^{u_n}\dV=1, 
\]
such that
$\|R_n\|_{L^p(M)}\to0$ and $(u_n)$ blows up at sequences $(x_{1,n})$ and $(x_{2,n})$ (in the notation of Theorem \ref{trmmain}) converging to the same point $x_0$. 
\end{trm}

In Theorem \ref{trmclusterexample}, $R_n\to 0$ in $L^p(M)$ for some $p\in(1,2)$ implies that $R_n\to 0$ also in $H^{-1}(M)$, so that $(u_n)$ satisfies the hypothesis of Theorems \ref{trmquant} and \ref{trmmain}. Theorem \ref{trmclusterexample} shows that even the stronger assumption $R_n\to 0$ in $L^p$ for some $p<2$, is not enough to prevent clustering at blow-up points.
We also remark that pushing $p$ to the whole range $[1,2)$ requires sharper estimates than the ones required to have $R_n\to 0$ in $H^{-1}(M)$, or even in $L^p(M)$ for $1\le p<{4/3}$. In this latter range, a more generic two-bubble ansatz suffices, while in the range $p\in [4/3,2)$ the balancing condition of Definition \ref{defbalance} will be used to produce a cancellation.

On the other hand, as already shown by \cite{ohtsukasuzuki-2003-Calcvar} in the case of a bounded domain $\Omega\subset\R^2$, with an even stronger assumption on the remainder term $R_n$, the clustering phenomenon disappears and the conclusion of \cite{li-1999-CMP-harnacktype}
is recovered:

\begin{prop}\label{propisolatedstrong}
Assume, in addition to the hypotheses of Theorem~\ref{trmmain}, that, for $w_n:=u_n-v_n$,
\begin{equation}\label{strongresidual}
        \|\nabla w_n\|_{L^\infty(M)}\leq C, \qquad \forall n\in \N,
\end{equation}
for some $C>0$.
Then the limiting centers are pairwise distinct:
\[
        x^{(i)}\ne x^{(j)}\qquad\hbox{for }i\ne j.
\]
\end{prop}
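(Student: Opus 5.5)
We argue by contradiction and reduce to the classical picture of exact solutions with a smooth, Lipschitz perturbation. Suppose that two centers coincide, say $x^{(1)}=x^{(2)}=:x_0$. Since $w_n=u_n-v_n$ has $\int_M w_n\dV=0$ by \eqref{vn} and $\|\nabla w_n\|_{L^\infty}\le C$, Arzel\`a--Ascoli gives $w_n\to w$ uniformly on $M$, with $w$ Lipschitz, after passing to a subsequence. Then $v_n$ solves the exact equation
\[
-\Delta_g v_n=\lambda_n\big(h_n e^{v_n}-1\big),\qquad h_n:=he^{w_n},
\]
where the weights $h_n$ are positive, bounded above and below uniformly, and uniformly Lipschitz. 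Moreover $\int_M h_n e^{v_n}\dV=1$ by \eqref{normalization}.

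The key step is to show that for such equations the blow-up points are isolated, following the strategy of \cite{li-1999-CMP-harnacktype} and of \cite{ohtsukasuzuki-2003-Calcvar} in the version with Lipschitz weights. Work in isothermal coordinates near $x_0$ and add a smooth function to absorb the constant $-\lambda_n$ and the conformal factor. Locally this gives $-\Delta \bar v_n=K_n e^{\bar v_n}$ with $K_n$ uniformly Lipschitz and bounded between positive constants, and with $\int K_ne^{\bar v_n}\le C$.

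The central ingredient is the Harnack-type inequality
\[
\max_{B_r} \bar v_n+\min_{B_{2r}}\bar v_n\le C,
\]
which holds under uniform Lipschitz bounds on $K_n$. For Liouville-type equations this requires only a Lipschitz bound on $K_n$, not $C^1$ regularity. Its proof uses the moving-plane / Pohozaev argument of Li and Chen--Lin applied to blow-up solutions. Because the Pohozaev identity only involves $x\cdot\nabla K_n$, it needs only $\|\nabla K_n\|_{L^\infty}$, and this is exactly what \eqref{strongresidual} supplies.

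The standard consequence is that each blow-up point carries mass exactly $8\pi$ and contains a single bubble. More precisely, with $x_{1,n}$ a local maximum point, the profile $\bar v_n(x)-\bar v_n(x_{1,n})+2\log(1+\cdots)$ is controlled up to a fixed radius $\delta$ by the $\sup+\inf$ estimate, so $\lambda_n he^{u_n}$ decays like $|x-x_{1,n}|^{-4}r_{1,n}^{2}$ in the annulus $B_\delta\setminus B_{Rr_{1,n}}$.

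To conclude, suppose a second bubble center $x_{2,n}\to x_0$ exists. By item (ii) of Theorem~\ref{trmmain}, $d_n:=|x_{1,n}-x_{2,n}|\gg r_{1,n}+r_{2,n}$. Note that $\widetilde V_{2,n}$ has bounded values near $0$, so $\max v_n$ near $x_{2,n}$ is of order $-2\log r_{2,n}$. Apply the $\sup+\inf$ inequality on the ball of radius comparable to $d_n$ around $x_{1,n}$. Equivalently, use the pointwise decay $v_n(x)\le -2\log r_{1,n}-4\log(|x-x_{1,n}|/r_{1,n})+C$ valid in $B_\delta(x_{1,n})$, obtained from the Harnack inequality in Li's form $u(x)+2\log|x-x_{1,n}|\le C$ on the relevant region. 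Evaluating this at $x_{2,n}$ shows that $e^{v_n}$ near $x_{2,n}$ has size $r_{1,n}^2d_n^{-4}\ll r_{2,n}^{-2}$ unless $d_n\lesssim \sqrt{r_{1,n}r_{2,n}}$, and the latter contradicts the scale separation. Hence no second bubble can form. The constants $c_{i,n}$ cause no trouble, because $v_n$ itself is normalized via $\int h_ne^{v_n}=1$, so one can compare the maxima of $v_n$ directly.

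The main obstacle is establishing the pointwise bound $v_n(x)+2\log \dist(x,x_{1,n})\le C$ near $x_0$ under only Lipschitz control of $h_n$. The plan is to do this by Li's selection process combined with the Chen--Lin/Li $\sup+\inf$ estimate for Lipschitz $K$. If needed, this can be rederived by a Pohozaev identity on $B_{d_n/2}(x_{1,n})$: the boundary terms there are controlled, while the error $\int (x\cdot\nabla K_n)e^{\bar v_n}=O(d_n)$ is small, and this forces the local mass to be exactly $8\pi$, which a second bubble inside the ball would violate.
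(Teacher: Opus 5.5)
Your reduction is correct, and it starts differently from the paper. Since $\int_M w_n\,dV_g=0$ and $\|\nabla w_n\|_{L^\infty}\le C$, $w_n$ is uniformly bounded and Lipschitz. So $v_n$ is an \emph{exact} solution of $-\Delta_g v_n=\lambda_n(he^{w_n}e^{v_n}-1)$, with a weight that is uniformly Lipschitz and bounded away from $0$ and $\infty$. (It is this bound on $w_n$, not the normalization $\int_M h_ne^{v_n}\,dV_g=1$, that makes the constants $c_{i,n}$ bounded, since they are local averages of $w_n$.)

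\textbf{The gap.} The decisive step is excluding a second bubble converging to $x_0$. You call it a ``standard consequence'' and later list it as the main obstacle, but none of the tools you propose yields it.
\begin{enumerate}
\item The $\sup+\inf$ inequality is only an upper bound, and a cluster is compatible with it. Take two bubbles at distance $d_n\gg r_{1,n}+r_{2,n}$ with profile $4\log\frac{1}{|y|}+4\log\frac{1}{|y-y_2|}$ at scale $d_n$. On every ball centred at $x_{1,n}$ the scale-normalized $\sup+\inf$ stays bounded above; the second bubble only lowers the infimum.
\item The decay $v_n(x)\le 2\log r_{1,n}-4\log|x-x_{1,n}|+C$ on a fixed ball is not ``equivalent'' to $\sup+\inf$. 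It is precisely the single-bubble statement you are trying to prove. A selection process only gives $v_n(x)+2\log d_n(x)\le C$, where $d_n$ is the distance to \emph{all} selected centers.
\item Your fallback Pohozaev identity on $B_{d_n/2}(x_{1,n})$ cannot produce a contradiction. That ball contains only the first bubble, and the dilation identity there is automatically satisfied by a single $8\pi$ bubble.
\end{enumerate}

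\textbf{How to close it.} Either of the following works.
\begin{enumerate}
\item Quote Li's Harnack-type theorem in the form that gives the single-bubble profile on a fixed ball. Its hypotheses are a uniformly Lipschitz weight, bounded mass, bounded oscillation of $v_n$ on $\partial B_r(x_0)$, and $v_n$ locally bounded above on $\overline{B_r(x_0)}\setminus\{x_0\}$. Check the last two with Propositions~\ref{propgradient} and~\ref{nodiffuse}.
\item Run Pohozaev on a ball containing the \emph{whole} cluster, which is short. Work in isothermal coordinates on a fixed $B_r(a_\alpha)$ with $K=e^{2\varphi}h$.
\begin{itemize}
\item Test the equation of $v_n$ with $x\cdot\nabla v_n$.
\item In the nonlinear term, split $x\cdot\nabla v_n=x\cdot\nabla u_n-x\cdot\nabla w_n$ and integrate $K\,x\cdot\nabla e^{u_n}$ by parts.
\item The $w_n$-term is $O(r)$ by \eqref{strongresidual}, and $e^{u_n}\to0$ uniformly on $\partial B_r$.
\item By Green's representation, Theorem~\ref{trmquant} and \eqref{nofurther}, $\nabla v_n\to\nabla\bigl(4m_\alpha\log\frac{1}{|x|}+\phi\bigr)$ uniformly on $\partial B_r$, with $\phi$ smooth.
\end{itemize}
This gives $16\pi m_\alpha^2=16\pi m_\alpha+O(r)$, hence $m_\alpha=1$.
\end{enumerate}

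\textbf{Comparison with the paper.} The paper works at the cluster scale instead. It rescales at the minimal inter-center distance $\delta_n$. Using the gradient and $L^q$ estimates, it proves $\widehat v_n-c_n\to P=4\sum_{j\in J}\log\frac{1}{|y-y_j|}+C$ in $C^{1,\gamma}_{\loc}$. It then applies the \emph{translation} Pohozaev identity at each $y_j$, which forces $(y_j)_{j\in J}$ to be a critical point of the logarithmic interaction energy. That energy has no critical points by Lemma~\ref{lem:nocriticallog}.
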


By elliptic estimates and Sobolev space embeddings, \eqref{strongresidual} is implied by
\begin{equation}\label{strongresidualbis}
        \|R_n\|_{L^p(M)}\leq C, \quad \text{for some }p>2\qquad \forall n\in \N,
\end{equation}
or by the even weaker
\begin{equation}\label{strongresidualter}
        \|R_n\|_{L^{2,1}(M)}\leq C, \qquad \forall n\in \N,
\end{equation}
where $L^{2,1}(M)\subsetneq L^2(M)$ is the Lorentz space (this uses the duality between $L^{2,1}(M)$ and $L^{2,\infty}(M)$, and the estimate in Lemma \ref{green}, which implies $\nabla_x G(x,\cdot)\in L^{2,\infty}(M)$). The borderline case, with remainder $R_n$ bounded, or even vanishing, in $L^2(M)$, remains open. Indeed, by elliptic estimates, $R_n\to 0$ in $L^2(M)$ implies that $w_n\to 0$ in $H^2(M)$, which barely fails to imply \eqref{strongresidual}.

\medskip

The proof of Proposition \ref{propisolatedstrong} is by contradiction, assuming that $x_{i,n}\to a$ for $i\in J$,  $|J|>1$, and it combines three ingredients: the gradient estimate
\eqref{gradestM}; the $L^p$-estimate \eqref{nofurther} with $p>2$, which, interpolated with
the vanishing of the $L^1$-norm of the nonlinearity away from the centers, yields
$C^{1,\alpha}$-compactness of the sequence $\hat v_n$ rescaled at the cluster scale
$\delta_n$; and a Pohozaev-type identity, which forces the limiting rescaled centers $(y_j)_{j\in J}$
to be a critical point of the logarithmic interaction energy
$\sum_{j<k}\log\frac{1}{|y_j-y_k|}$, which, on the other hand, is known to have no critical points (Lemma \ref{lem:nocriticallog}).

Let us further remark two things. The assumption of \cite{ohtsukasuzuki-2003-Calcvar} is slightly stronger than \eqref{strongresidual}, as the authors assume $\nabla w_n\to 0$ in $L^\infty$. From this, they prove the full quantization result of Theorem \ref{trmquant}, with $m_\alpha=1$. The main difficulty of our paper lies in the fact that the Palais-Smale assumption $R_n\to 0$ in $H^{-1}(M)$ only implies $w_n\to 0$ in $H^{1}(M)$, which leads to a complete loss of pointwise estimates for $w_n$ and, therefore, for $u_n$.

\medskip

With this in mind, we will next explain some of the main ideas for proving Theorem \ref{trmmain}, emphasizing the points at which the presence of the error term
$R_n$ forces arguments genuinely different from those available for exact solutions.

\medskip
\noindent\emph{Extraction of  bubbles and normalizing constants.}
A concentration--compactness argument produces
finitely many bubbling sequences $(x_{i,n},r_{i,n})$ which are \emph{simple}, i.e.
mutually separated at their own scale (see \eqref{separ}), and each of which carries exactly
$8\pi$ of conformal volume; since $\lambda_n$ is bounded, the procedure ends after finitely-many steps.
One caveat is that the bubbles are extracted from $v_n$, not
from $u_n$, and the rescaled profiles converge only after subtracting the constants
$c_{i,n}$, which are local averages of $u_n-v_n$ near $x_{i,n}$ at scales $r_{i,n}$. 
Although $\|u_n-v_n\|_{H^1(M)}\to0$ by Lemma~\ref{lemclose}, this forces only the
\emph{oscillation} of $u_n-v_n$ to vanish at the bubble scales, not its average: the
sequences $(c_{i,n})$ are in general unbounded, and no pointwise comparison between
$u_n$ and $v_n$ is available at any of the scales involved. Consequently every estimate
we prove is either normalization-free (as is \eqref{nofurther}, which involves
$e^{u_n}$ alone) or is stated for $v_n+c_{i,n}$, with a constant chosen  on each
region and  scale. The bridge between the latter two estimates is provided by
Lemmas~\ref{exppqM}--\ref{exppqmean}, which upgrade $H^1$-closeness to $L^p$ closeness of the 
exponentials (up to a shift by $c_{i,n}$).

\medskip
\noindent\emph{$L^p$-estimates at the bubble scales.}
In Proposition~\ref{propextraction} we prove an
intermediate, \emph{scale-invariant} estimate: for every fixed $p\in(2,+\infty)$,  $\rho_*>0$, there
exists $C_*>0$ such that
\[
r^2\left(\ \rlap{\bf --}\!\!\int_{B^g_r(x)}e^{pu_n}\,dV_g\right)^{1/p}\le\rho_*,
\qquad 0<r\le\frac{d_n(x)}{C_*},
\]
with $d_n$  the distance from the bubble centers. This interpolates between the case
$p=1$ of \cite{malizia-2025-ACV} and the case $p=+\infty$ of exact solutions: it is
the estimate that replaces pointwise control throughout the paper. The restriction $p>2$
is not technical: since $|\nabla_xG(x,y)|\lesssim d_g(x,y)^{-1}$, this is in $L^{p'}_{\rm loc}$ if and only if $p'<2$, so $p>2$ is
 what makes $e^{u_n}$ summable against the gradient of the Green kernel. Two
consequences follow, used repeatedly. First, the gradient estimate
$d_n(x)|\nabla_gv_n(x)|\le C$ of Proposition~\ref{propgradient},  our substitute
for the Harnack-type inequality of \cite{li-1999-CMP-harnacktype}. Second, the right-hand sides of  the
rescaled equations lie in $L^p$ with $p>2$, so the embedding $W^{2,p}\hookrightarrow C^{1,\alpha}$
provides the compactness needed in every blow-up alternative of the paper; with $L^1$
bounds only,  these arguments cannot be run.

\medskip
\noindent\emph{Cluster trees.}
Since the centers $x_{i,n}$ may converge to the same point (and they might genuinely do, by Theorem \ref{trmclusterexample}), the bubbles cannot be treated one at a time.
At each blow-up point $a_\alpha$ we organize the indices $I(\alpha)$ into a finite rooted
tree $\mathcal T(a_\alpha)$ whose nodes $A$ are clusters of bubbles, endowed with an inner
scale $s_{A,n}$ (the diameter of the cluster, the bubble scale for a leaf) and an outer
scale $S_{A,n}$ (half the distance to the closest clusters), and satisfying
$s_{A,n}=o(S_{A,n})$, see Lemma~\ref{lemclustertree}. The tree has finite height, and it
reduces to the classical picture of finitely many isolated simple bubbles precisely when
all  trees have height one. The quantization
statement is proved by induction.

\medskip
\noindent\emph{Neck analysis.}
One of the cores in the paper is Proposition~\ref{propneck}. We first test  the Palais--Smale
condition  
\eqref{PSeq} against radial logarithmic cut-offs supported in annuli
$B^g_{S_n}(z_n)\setminus B^g_{s_n}(z_n)$ centered at a cluster center (Lemma \ref{lemaverage}); since these
cut-offs are bounded in $H^1(M)$ \emph{uniformly in the two radii}, the error term
contributes only an $o(1)$, again uniformly. The resulting flux identity measures the
mean radial derivative of $u_n$ across thick annuli and yields a geometric decay of the conformal volume $\int\lambda_nhe^{u_n}$ on dyadic annuli. Two features are crucial:
the estimate is uniform in the thickness $S_n/s_n$ of the neck,  making the
dyadic summation possible; and its hypotheses are
scale-invariant, so that they can be verified both at the scale of a single bubble and at
the scale of a whole cluster. This is what makes the decay \emph{propagate} along the
tree of clusters, from the {\em leaves} to the {\em root}, see the terminology in 
Section \ref{sec:trees}.

\medskip

\noindent Proposition~\ref{propclusterquant} then proves, by induction on the height of the nodes,
that every node $A$ carries  mass $8\pi|A|$ on the ball of radius $Rs_{A,n}$ and no
mass on the surrounding neck; Proposition~\ref{nodiffuse} excludes residual mass in the
weak limit. Together, these give
$\lambda_nhe^{u_n}dV_g\rightharpoonup8\pi\sum_\alpha m_\alpha\delta_{a_\alpha}$ and
complete the proof of Theorem~\ref{trmmain}, hence of Theorem~\ref{trmquant}.
Let us also stress that we never assume $J_{\lambda_n}(u_n)$ to be bounded: only
$J'_{\lambda_n}(u_n)\to0$ in $H^{-1}$ is used.

\bigskip

It might be interesting to broaden our analysis to other situations, such as the case of singular Liouville equations, as treated in 
\cite{bartolucci-tarantello-2002-CMP} et al., 
or the above-mentioned $Q$-curvature equation, extending the result in \cite{malchiodi-2006-CRELLE}. While our approach might be flexible enough to treat  functionals with nonlinearity of the form $e^{u^p}$, $p\in (1,2)$ (for instance the ones studied in \cite{DeMarchisMalchiodiMartinazziThizy2022}), it fails in the case $p=2$. This is sharp, since Costa and Tintarev \cite{costa-tintarev-2014-JFA} 
constructed noncompact Palais--Smale sequences for the Moser-Trudinger functional violating quantization.

\medskip

We finally compare our results with two related variational problems. In \cite{Struwe1984GlobalCompactness} Struwe  studied bounded Palais-Smale sequences 
corresponding to functionals with critical growth in dimension larger than two. 
 He showed that, up to subsequences, in the limit the energy splits into that of the 
 weak limit and that of finitely many {\em bubbles}.  
Here the absence of energy loss in the necks is a direct consequence of the $H^1$ decomposition (known as \emph{Struwe decomposition}), which is based on the boundedness of Palais-Smale sequences in $W^{1,2}$. There is in general no obstruction on clustering, which can be sometimes avoided by the blow-up analysis for exact solutions  initiated by Schoen, see e.g. \cite{Druet-Y}. In our Theorem \ref{trmmain}, handling the nonlinearity is more difficult not only because our sequence $(u_n)$ is unbounded in $H^1$, but also because small perturbations in the $H^1$-norm need not have a negligible effect on the nonlinearity $e^u$ in the bubbling regions.

In the opposite direction we have the example by Parker \cite{parker-1996-JDG} related to harmonic maps from the sphere $\mathbb{S}^2$ into itself, corresponding to the conformally invariant Dirichlet energy. 
In spite of a similar scaling as our $J_\lambda$ {and a similar need to perform a neck analysis}, Palais-Smale sequences for the  Dirichlet energy  need not quantize. 
The additional amount of energy can be lost in annular necks around bubbles,  in sharp contrast with our Proposition \ref{propneck}. This is related to  geodesics, which can be seen as \emph{degenerate} harmonic maps that allow to accumulate energy inside the necks. Quantization can be recovered assuming suitable integral bounds on the {\em tension field}, see e.g. \cite{Qing-Tian}.

\paragraph{Notation} We will write
\[
\bar u:=\fint_M u\,dV_g:=\frac{1}{\Vol_g(M)}\int_M u\, dV_g,
\]
to denote the average of $H^1$ functions. We will also use the notation $\bar u_D$ to indicate the average over a specific subset $D$ of $M$ or of $\R^2$.

\medskip

The notation $x_n\to x$, $\lambda_n\to \lambda_\infty$, $o(1)\to 0$ is intended as $n\to \infty$, even if sometimes we do not explicitly write that.

\medskip

We will also use the notation $a_n\asymp b_n$ to denote positive sequences such that there exist $c,C>0$,  with
$c a_n\le b_n\le C a_n$ for any $n\in \mathbb{N}$.
\medskip
With $B_r(x)$ and $B^g_r(x)$ we will denote balls in $\R^2$ and in $(M,g)$. Similarly, $A_{r,s}(x):= B_s(x)\setminus B_r(x)$ and  $A^g_{r,s}(x):= B^g_s(x)\setminus B^g_r(x)$ will denote annuli in $\R^2$ and in $(M,g)$.

\

\paragraph{Plan of the paper}
The paper is organized as  follows. In Section \ref{sec:prel} we collect some 
preliminary facts including consequences of the Moser-Trudinger inequality and 
the Green's representation formula. In Section \ref{sec:extract} we extract 
bubbles from a blow-up procedure and derive uniform $L^p$-estimates for $e^{u_n}$ and gradient estimates for $v_n$ away from the bubbles. Section \ref{sec:trees} is devoted to the 
analysis of bubble-clusters and some useful analysis of scales is introduced. 
In Section 
\ref{sec:necks} we derive crucial integral decay estimates on dyadic annuli, 
leading to a local quantization result. In Section \ref{sec:cluster} we then 
 extend the analysis globally, by properly analyzing the clustering behavior of bubbles, and complete the proof of Theorem \ref{trmmain}. 
Finally, in Section \ref{secexample} we give the clustering example, hence proving Theorem \ref{trmclusterexample}, while in 
Section \ref{sec:isolation} we prove isolation of the blow-up points under a stronger integral condition, hence proving Proposition \ref{propisolatedstrong}. 

\begin{center}
	{\bf Acknowledgments} 
\end{center}

\noindent 
The authors  are members of GNAMPA, as part of INdAM. A.M. is also supported by the project {\em Ricerca di base} from Scuola Normale Superiore. 
\

\begin{center}
	{\bf Statement on the use of AI} 
\end{center}

\noindent
The main novelties in this paper that lead to quantization for Palais-Smale sequences (in particular the $L^p$-bubble extraction and the gradient estimates in Section \ref{sec:extract}, and the test function argument and localization on dyadic annuli in Section \ref{sec:necks}) were entirely generated by the authors. 
AI (Claude Opus 5 and ChatGPT 5.4-5.6) was used in the following tasks: 

\smallskip

\noindent
$\bullet$ to construct the bubbling configuration of Theorem \ref{trmclusterexample} and draft the relative estimates;\\
 $\bullet$ to draft some routine proofs and the bubble tree structure, mainly in Sections \ref{sec:trees}, \ref{sec:cluster};\\
 $\bullet$ to proof-check the arguments.

 \smallskip
 
\noindent
All the final text has been revised and proof-checked by the authors.

\section{Preliminary estimates}\label{sec:prel}

The proof of the next lemma, which follows by testing \eqref{PSeq} against $u_n-v_n$, can be found in \cite[Lemma 2.2]{malizia-2025-ACV}.
\begin{lemma}\label{lemclose}
Let $u_n$ and $v_n$ be defined by \eqref{PSeq} and \eqref{vn}. Then
\begin{equation}\label{close}
        \|u_n-v_n\|_{H^1(M)}\to0.
\end{equation}
\end{lemma}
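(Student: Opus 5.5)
Set $w_n:=u_n-v_n$. Subtracting \eqref{vn} from \eqref{PSeq} gives
\[
-\Delta_g w_n=R_n \quad\text{on } M, \qquad \int_M w_n\dV=0 .
\]
The plan is to bound $\|w_n\|_{H^1(M)}$ by $\|R_n\|_{H^{-1}(M)}$ through the weak formulation of this equation. The nonlinear terms cancel exactly in the difference, so this should be a purely linear argument.

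First, we observe that $w_n\in H^1(M)$, since both $u_n$ and $v_n$ lie in $H^1(M)$. The right-hand side of \eqref{vn} is $\lambda_n(he^{u_n}-1)$. By the Moser--Trudinger inequality it lies in every $L^p$, and it has zero average because of \eqref{normalization} and $\Vol_g(M)=1$. Hence $v_n$ is well defined up to constants, and the constant is fixed by $\int_M(v_n-u_n)\dV=0$.

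Next, we test the equation for $w_n$ against $w_n$ itself. Subtracting the weak forms of \eqref{PSeq} and \eqref{vn}, for every $\varphi\in H^1(M)$ we have
\[
\int_M\nabla_g w_n\cdot\nabla_g\varphi\dV=\langle R_n,\varphi\rangle_{H^{-1},H^1}.
\]
Taking $\varphi=w_n$ yields
\[
\|\nabla_g w_n\|_{L^2}^2\le\|R_n\|_{H^{-1}}\|w_n\|_{H^1}.
\]

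Since $\bar w_n=0$, the Poincaré inequality on the closed surface gives $\|w_n\|_{H^1}\le C\|\nabla_g w_n\|_{L^2}$. Combining the two estimates, $\|\nabla_g w_n\|_{L^2}\le C\|R_n\|_{H^{-1}}\to0$, and therefore $\|w_n\|_{H^1}\to0$.

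The only point requiring care, and the one I expect to be the main (mild) obstacle, is compatibility. Integrating $-\Delta_g w_n=R_n$ forces $\langle R_n,1\rangle=0$. This holds because testing \eqref{PSeq} with $\varphi\equiv1$ and using \eqref{normalization} gives $\langle R_n,1\rangle=0$, so nothing is lost. It also confirms that the normalization via $\tilde u_n\mapsto u_n$ does not alter $R_n$, since constants are in the kernel of $\nabla_g$.

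If the $H^{-1}$ norm is taken with respect to the full $H^1$ norm, the estimate above applies verbatim. No uniform bound on $u_n$ is needed, which matches the remark that $J_{\lambda_n}(u_n)$ need not be bounded.
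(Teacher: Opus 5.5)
Your proposal is correct and matches the argument the paper cites from the earlier work of the second author: test the difference equation $-\Delta_g w_n=R_n$ (with $\bar w_n=0$) against $w_n=u_n-v_n$, then apply Poincar\'e's inequality. Your compatibility check $\langle R_n,1\rangle=0$ is a valid consistency remark, but the estimate does not need it.
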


We next derive a simple but useful consequence of the sharp Moser-Trudinger inequality.

\begin{lemma}
\label{lem:MT}
Let $(M,g)$ be a closed Riemannian surface. For every $\beta>0$,
there exists a constant $C=C(M,g,\beta)>0$ such that
\begin{equation}\label{MT1} 
    \int_M
e^{\beta |u-\bar u|}\,dV_g
\leq
C\exp\left(
\frac{\beta^2}{16\pi}
\int_M |\nabla_g u|^2\,dV_g
\right)
\end{equation}

for every $u\in H^1(M)$.

Let $D\subset\mathbb R^2$ be a (connected) bounded smooth domain. For every
$\beta>0$, there exists a constant $C=C(D,\beta)>0$ such that
\begin{equation}\label{MT2} 
\int_D
e^{\beta |u-\bar u_D|}\,dx
\leq
C\exp\left(
\frac{\beta^2}{8\pi}
\int_D |\nabla u|^2\,dx
\right)
\end{equation}
for every $u\in H^1(D)$.
\end{lemma}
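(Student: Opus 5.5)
The plan is to derive both inequalities from the sharp Moser--Trudinger inequality by a single Young-inequality step, using the symmetrization $e^{\beta|t|}\le e^{\beta t}+e^{-\beta t}$.

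\textbf{The closed surface case \eqref{MT1}.} By Fontana's sharp inequality on closed surfaces \cite{fontana-mosertrudinger-manifolds}, there is $C_0=C_0(M,g)$ such that for every $w\in H^1(M)$ with $\bar w=0$ and $\int_M|\nabla_g w|^2\dV\le1$,
\[
\int_M e^{4\pi w^2}\dV\le C_0 .
\]
Given $u$, I set $w=u-\bar u$ and $a^2=\int_M|\nabla_g u|^2\dV$; the case $a=0$ is trivial. Young's inequality gives
\[
\beta|w|\le 4\pi\frac{w^2}{a^2}+\frac{\beta^2a^2}{16\pi}.
\]
Since $w/a$ has zero mean and unit Dirichlet energy, integrating yields
\[
\int_M e^{\beta|w|}\dV\le C_0\,e^{\frac{\beta^2}{16\pi}a^2}.
\]
This is exactly \eqref{MT1}, and in fact with $C$ independent of $\beta$.

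\textbf{The planar domain case \eqref{MT2}.} Here the relevant sharp constant for functions with zero mean (no boundary condition) on a smooth bounded domain is $2\pi$ instead of $4\pi$, because boundary points only see half-discs (Chang--Yang). Thus for $\bar w_D=0$ and $\int_D|\nabla w|^2\le1$,
\[
\int_D e^{2\pi w^2}dx\le C_1(D).
\]
Repeating the Young step with $2\pi$ in place of $4\pi$,
\[
\beta|w|\le 2\pi\frac{w^2}{a^2}+\frac{\beta^2a^2}{8\pi},
\]
gives \eqref{MT2}.

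\textbf{Fallback for the planar case.} If I prefer not to rely on the sharp Chang--Yang constant, I can take a bounded extension operator $E:H^1(D)\to H^1_0(\Omega)$ for a larger ball $\Omega$ and apply Moser's inequality there. This produces an exponent $c\,\beta^2\int_D|\nabla u|^2$ with a non-sharp $c$, which would not reproduce the stated $\frac1{8\pi}$. So the main obstacle is simply quoting the correct sharp constant for the Neumann/zero-mean setting, namely $2\pi$ for $C^1$ domains. I would cite it rather than reprove it; the rest is routine.
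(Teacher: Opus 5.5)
Your proposal is correct and follows essentially the same route as the paper: the paper also quotes Fontana's inequality (constant $4\pi$) on $M$ and the Chang--Yang inequality (constant $2\pi$) on $D$, then applies Young's inequality with $\gamma=\int|\nabla u|^2$ and $\alpha=4\pi$, respectively $\alpha=2\pi$. The symmetrization $e^{\beta|t|}\le e^{\beta t}+e^{-\beta t}$ you announce at the start is never used, and the extension-operator fallback is unnecessary, since it would not give the stated constant anyway.
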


\begin{proof}
The first estimate follows from Fontana's inequality on closed surfaces \cite{fontana-mosertrudinger-manifolds},
\[
\sup_{\substack{
u\in H^1(M),\ \bar u=0\\
\int_M|\nabla_g u|^2\,dV_g\leq1
}}
\int_M e^{4\pi u^2}\,dV_g<+\infty.
\]
The second follows from the Chang--Yang inequality \cite{chang-yang-1988-JDG}
\[
\sup_{\substack{
u\in H^1(D),\ \bar u_D=0\\
\int_D|\nabla u|^2\,dx\leq1
}}
\int_D e^{2\pi u^2}\,dx<+\infty.
\]
In both cases, the claimed linear-exponential estimate follows
from Young's inequality
\[
\beta |t|
\leq
\frac{\alpha t^2}{\gamma}
+\frac{\beta^2\gamma}{4\alpha},
\]
with $\gamma=\int|\nabla u|^2$ and $\alpha=4\pi$, respectively
$\alpha=2\pi$.
\end{proof}

\begin{lemma}\label{exppqM}
Suppose $f_n-g_n\to0$ in $H^1(M)$.
If $q>p\ge1$ and $E_n\subseteq M$ are measurable  with
\[
        \int_{E_n} e^{qg_n}\dV\le C,
\]
then
\begin{equation}\label{exppqMclaim}
        \int_{E_n}|e^{pf_n}-e^{pg_n}|\dV\to0.
\end{equation}
\end{lemma}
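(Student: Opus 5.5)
Set $w_n:=f_n-g_n$, so that $w_n\to0$ in $H^1(M)$, and write the pointwise identity
\[
e^{pf_n}-e^{pg_n}=e^{pg_n}\bigl(e^{pw_n}-1\bigr).
\]
The plan is to use Hölder's inequality with exponents $q/p$ and $r:=q/(q-p)$:
\[
\int_{E_n}|e^{pf_n}-e^{pg_n}|\dV\le \Bigl(\int_{E_n}e^{qg_n}\dV\Bigr)^{p/q}\Bigl(\int_M|e^{pw_n}-1|^{r}\dV\Bigr)^{1/r}\le C^{p/q}\,\|e^{pw_n}-1\|_{L^r(M)}.
\]
Hence it suffices to show $e^{pw_n}\to1$ in $L^r(M)$ for every finite $r$. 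The point is that the hypothesis on $g_n$ is used only through the bound on the first factor, while all smallness comes from $w_n$.

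To prove $\|e^{pw_n}-1\|_{L^r}\to0$, decompose $w_n=\bar w_n+(w_n-\bar w_n)$. Since $w_n\to0$ in $H^1(M)$, the averages $\bar w_n\to0$ (because $|\bar w_n|\le\|w_n\|_{L^2}$ with $\Vol_g(M)=1$), and $\|\nabla w_n\|_{L^2}\to0$. Use the elementary bound $|e^t-1|\le |t|e^{|t|}$. This gives
\[
|e^{pw_n}-1|^r\le p^r|w_n|^r e^{rp|w_n|}.
\]
Apply Cauchy--Schwarz to get
\[
\int_M|e^{pw_n}-1|^r\dV\le p^r\|w_n\|_{L^{2r}}^{r}\Bigl(\int_M e^{2rp|w_n|}\dV\Bigr)^{1/2}.
\]
The first factor tends to zero by the Sobolev embedding $H^1(M)\hookrightarrow L^{2r}(M)$. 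For the second factor, write $e^{2rp|w_n|}\le e^{2rp|\bar w_n|}e^{2rp|w_n-\bar w_n|}$ and apply \eqref{MT1} from Lemma~\ref{lem:MT} with $\beta=2rp$. This yields
\[
\int_M e^{2rp|w_n-\bar w_n|}\dV\le C\exp\Bigl(\tfrac{(2rp)^2}{16\pi}\|\nabla w_n\|_{L^2}^2\Bigr)\to C,
\]
so the second factor stays bounded. The product therefore tends to zero, which proves \eqref{exppqMclaim}.

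I see no genuine obstacle. The only subtlety is that the exponent must satisfy $q>p$ strictly, so that the conjugate exponent $r$ is finite and Moser--Trudinger control of $e^{c|w_n|}$ is available. The other point is that the measurable sets $E_n$ enter only through the first Hölder factor, so the estimate is uniform in $E_n$; this is why we integrate the second factor over all of $M$.
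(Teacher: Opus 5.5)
Your proof is correct and is essentially the paper's argument. It uses the bound $|e^t-1|\le |t|e^{|t|}$, Hölder's inequality to split off $\bigl(\int_{E_n}e^{qg_n}\dV\bigr)^{p/q}$, the Sobolev embedding for the $|w_n|$ factor, and \eqref{MT1} to bound $\int_M e^{c|w_n|}\dV$ uniformly. The only cosmetic difference is that the paper uses a single three-factor Hölder inequality with $p/q+1/a+1/b=1$, where you use a two-step Hölder plus Cauchy--Schwarz.
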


\begin{proof}
Set $w_n=f_n-g_n$. By the mean-value theorem,
\[
        |e^{pf_n}-e^{pg_n}|\le p|w_n|e^{pg_n}e^{p|w_n|}.
\]
Choose $a,b>1$ with $p/q+1/a+1/b=1$. Holder's inequality gives
\[
\int_{E_n}|e^{pf_n}-e^{pg_n}|\dV
\le p\|w_n\|_{L^a(M)}
        \left(\int_{E_n}e^{qg_n}\dV\right)^{p/q}
        \left(\int_Me^{bp|w_n|}\dV\right)^{1/b}.
\]
The first factor on the right-hand side tends to zero by the Sobolev embedding. The second factor is bounded by assumption. As for the third factor, since $w_n\to0$ in $H^1(M)$, we have $\overline{|w_n|}\to 0$ by Cauchy-Schwarz, so  \eqref{MT1} in Lemma \ref{lem:MT} implies
\[
        \int_Me^{bp|w_n|}\dV= e^{bp\overline{|w_n|}} \int_M e^{bp(|w_n|-\overline{|w_n|})}\dV \le C (1+o(1))\le C,
\]
uniformly in $n$. Hence \eqref{exppqMclaim} follows.
\end{proof}

We aim next to localize the above result to some domains of the plane. 

\begin{lemma}\label{exppqmean}
Let $D\subset\R^2$ be a fixed bounded smooth domain. Assume that $f_n-g_n\to0$ in $H^1(D)$.
If $q>p\ge1$, $E_n\subset D$, and
\[
        \int_{E_n}e^{qg_n}\,dx\le C,
\]
for a fixed constant $C$, then
\[
        \int_{E_n}|e^{pf_n}-e^{pg_n}|\,dx\to0.
\]
\end{lemma}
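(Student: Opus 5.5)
The argument follows the proof of Lemma~\ref{exppqM} essentially line by line, with the closed-surface inequality \eqref{MT1} replaced by its planar counterpart \eqref{MT2} on the fixed domain $D$. Set $w_n:=f_n-g_n$, so that $w_n\to0$ in $H^1(D)$ by hypothesis. By the mean-value theorem,
\[
|e^{pf_n}-e^{pg_n}|\le p|w_n|\,e^{pg_n}e^{p|w_n|}.
\]
We fix exponents $a,b>1$ with $p/q+1/a+1/b=1$, which is possible since $q>p$. Hölder's inequality on $E_n\subset D$ then gives
\[
\int_{E_n}|e^{pf_n}-e^{pg_n}|\,dx\le p\,\|w_n\|_{L^a(D)}\Big(\int_{E_n}e^{qg_n}\,dx\Big)^{p/q}\Big(\int_D e^{bp|w_n|}\,dx\Big)^{1/b}.
\]

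We treat the three factors in turn.
\begin{enumerate}
\item The first factor tends to zero. Since $D$ is a bounded smooth domain, the Sobolev embedding $H^1(D)\hookrightarrow L^a(D)$ holds for every $a<\infty$, and $w_n\to0$ in $H^1(D)$.
\item The middle factor is bounded by $C^{p/q}$, directly from the assumption on $\int_{E_n}e^{qg_n}$.
\item The third factor is the only point needing care, and it is where \eqref{MT2} enters. We apply \eqref{MT2} with $\beta=bp$ to the function $w_n$ itself (not to $|w_n|$). Using
\[
|w_n|\le |w_n-\bar{(w_n)}_D|+|\bar{(w_n)}_D|,
\]
we get
\[
\int_D e^{bp|w_n|}\,dx\le e^{bp|\bar{(w_n)}_D|}\,C(D,bp)\exp\Big(\frac{(bp)^2}{8\pi}\int_D|\nabla w_n|^2\,dx\Big).
\]
We have $|\bar{(w_n)}_D|\le |D|^{-1/2}\|w_n\|_{L^2(D)}\to0$ by Cauchy--Schwarz, and $\|\nabla w_n\|_{L^2(D)}\to0$. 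Hence this factor is bounded uniformly in $n$.
\end{enumerate}

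The product of a vanishing factor and two bounded factors tends to zero, which proves the claim. The main obstacle is only to check that the mean-value step needs no bound on $g_n$ beyond the integral one. This holds because the possibly large factor $e^{pg_n}$ is absorbed by Hölder against the assumed $L^{q/p}$ bound, while all the smallness comes from $w_n$. The constants depend only on $D,p,q,a,b$ and on the constant $C$ in the hypothesis.
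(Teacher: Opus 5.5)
Your proposal is correct and is exactly the argument the paper intends: it repeats the proof of Lemma~\ref{exppqM} (mean-value bound, three-factor H\"older inequality, Sobolev embedding for the first factor) with \eqref{MT2} in place of \eqref{MT1}. You apply \eqref{MT2} to $w_n$ and split off the mean with the triangle inequality, whereas Lemma~\ref{exppqM} applies the inequality to $|w_n|$; this is an immaterial variation.
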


\begin{proof}
The proof is very similar to the previous one, using \eqref{MT2} instead of \eqref{MT1}.
\end{proof}

\begin{rmk}\label{rmkmetrics}
The same conclusion as in Lemma \ref{exppqmean} holds if the Euclidean measure and gradient are replaced by a sequence of smooth metrics converging in $C^1$ to the Euclidean metric on $D$ and the gradients are taken with respect to such metrics. Indeed, 
if $g_n\to g_{\mathbb R^2}$ uniformly on $\overline D$, then
the corresponding volume elements and Dirichlet energies are uniformly
equivalent to their Euclidean counterparts. Hence the constants in the
preceding estimate may be chosen independently of $n$.
\end{rmk}

The next properties of the Green function are well-known, see e.g.  Chapter 4 in \cite{aubinbook}. 

\begin{lemma}\label{green}
Let $G$ be the Green function of $-\Delta_g$ on $M$, normalized by
\[
        -\Delta_{g,y}G(x,y)=\delta_x-1,
        \qquad \int_MG(x,y)\dV(y)=0.
\]
Then
\begin{equation}\label{greenest}
        \left|G(x,y)-\frac1{2\pi}\log\frac1{d_g(x,y)}\right|\le C,
        \qquad
        |\nabla_xG(x,y)|\le \frac{C}{d_g(x,y)}
\end{equation}
for $x\ne y$. If $-\Delta_gu=f$ with $\int_Mf\dV=0$, then
\begin{equation}\label{greenrep}
        u(x)=\bar u+\int_MG(x,y)f(y)\dV(y),
        \qquad \bar u:=\int_Mu\dV.
\end{equation}
\end{lemma}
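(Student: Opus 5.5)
The statement is the classical one and, as the paper indicates, comes from standard elliptic theory (Chapter 4 of Aubin's book). I would not reprove the existence of $G$. I would only sketch how the three properties follow from a parametrix construction.

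\emph{Existence and normalization.} Since $\int_M(\delta_x-1)\dV=0$ (recall $\Vol_g(M)=1$), the Fredholm alternative for $-\Delta_g$ on the closed surface gives a distributional solution $G(x,\cdot)$, unique up to an additive constant. The constant is fixed by $\int_M G(x,y)\dV(y)=0$. Symmetry $G(x,y)=G(y,x)$ follows by testing the equation for $G(x,\cdot)$ against $G(y,\cdot)$.

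\emph{Asymptotics \eqref{greenest}.} Let $\chi$ be a cutoff equal to $1$ for $d_g(x,y)<\inj(M)/4$ and supported where $d_g(x,y)<\inj(M)/2$. Set
\[
\Gamma(x,y)=\chi\,\frac1{2\pi}\log\frac1{d_g(x,y)}.
\]
In normal coordinates at $x$, the metric satisfies $g_{ij}=\delta_{ij}+O(r^2)$ and $\sqrt{\det g}=1+O(r^2)$. Hence
\[
-\Delta_{g,y}\Gamma=\delta_x+F(x,y),
\]
where $F$ collects the cutoff terms, which are smooth, and terms of order $O(1)$ near $x$, since $\Delta_g\log r-\Delta\log r=O(r)\cdot r^{-1}=O(1)$. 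So $F$ is bounded, uniformly in $x$. The difference $H=G-\Gamma$ then solves $-\Delta_{g,y}H=-1-F$, whose right-hand side is bounded. Elliptic $W^{2,p}$ estimates for every $p<\infty$, together with the normalization (the mean of $\Gamma$ is bounded uniformly in $x$), give $\|H(x,\cdot)\|_{C^{1,\alpha}}\le C$ uniformly in $x$. This yields the first bound. For the gradient bound, use symmetry to write $\nabla_xG(x,y)=\nabla_x\Gamma+\nabla_xH$. Here $|\nabla_x\Gamma|\le C/d_g(x,y)$ directly. For the term $\nabla_xH$, I would apply the same parametrix argument with $x$ as the differentiated variable, using that $\Gamma$ depends smoothly on $x$ off the diagonal. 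Alternatively, one can differentiate the equation in $x$ and use the uniform $C^{1,\alpha}$ regularity of $H$ jointly in $(x,y)$, which comes from the smooth dependence of the parametrix on $x$.

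\emph{Representation \eqref{greenrep}.} For $u\in C^2$, apply Green's formula to $u$ and $G(x,\cdot)$:
\[
\int_M G(x,y)(-\Delta_g u)(y)\dV(y)=u(x)-\int_M u\dV=u(x)-\bar u.
\]
For general $u$ with $-\Delta_g u=f\in H^{-1}$ (or $L^1$), the formula extends by density, using that $G(x,\cdot)\in L^q$ for all $q<\infty$.

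The only delicate step is the uniformity in $x$ of the gradient bound. This is handled by the compactness of $M$ and the smooth dependence of normal coordinates on the base point.
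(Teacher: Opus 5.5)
Your sketch is correct and follows the standard parametrix construction that the paper does not reprove but defers to (Chapter 4 of \cite{aubinbook}); in dimension two a single step suffices, since $-\Delta_{g,y}\Gamma-\delta_x$ is already bounded, and the step you call delicate is in fact immediate: $\Gamma$ and $G$ are symmetric, hence so is $H$, so the uniform $C^{1,\alpha}$ bound on $H(y,\cdot)$ directly gives $|\nabla_xH(x,y)|\le C$ uniformly in $y$. One point in the density remark needs tightening. For $f\in L^1$ the formula holds only for a.e.\ $x$ (by Fubini), and for $f\in H^{-1}$ the integral is not defined pointwise, since $G(x,\cdot)\notin H^1$. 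In the case the paper actually uses, $f\in L^q$ with $q>1$, your argument via $G(x,\cdot)\in L^{q'}$ is fine.
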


The next result is based on a classical estimate from \cite{brezis-merle-1991}, which is reflected also in \cite{ohtsukasuzuki-2003-Calcvar}. For its proof, see e.g. \cite[Lemma 2.4]{malizia-2025-ACV}. 

\begin{lemma}\label{cc}
Let $\zeta_n$ solve $-\Delta_g\zeta_n=f_n$ on $M$, $\|f_n\|_{L^1(M)}\le C$.
Then, up to a subsequence, either
\begin{enumerate}[label=\textup{(\alph*)}]
\item there exist $q>1$ and $C>0$ such that
\[
        \int_Me^{q(\zeta_n-\bar\zeta_n)}\dV\le C, 
        \qquad \hbox{ or }
\]
\item  there are finitely many points $a_1,\dots,a_N\in M$ such that for every $r>0$ and every $j$
\[
        \liminf_{n\to\infty}\int_{B^g_r(a_j)}|f_n|\dV\ge4\pi.
\]
\end{enumerate}
\end{lemma}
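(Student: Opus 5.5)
The plan is to follow the Brezis--Merle approach, carried over to closed surfaces through the Green representation of Lemma~\ref{green}. Write $f_n=f_n^+-f_n^-$ and let $\mu$ be a weak limit of $|f_n|\dV$, which exists up to a subsequence because $\|f_n\|_{L^1}\le C$. Call $x\in M$ \emph{regular} if $\mu(\{x\})<4\pi$, and \emph{singular} otherwise. Since $\mu(M)\le C$, there are at most $C/(4\pi)$ singular points $a_1,\dots,a_N$. For a singular point and any $r>0$ we have $\liminf_n\int_{B^g_r(a_j)}|f_n|\ge\mu(B^g_{r/2}(a_j))\ge\mu(\{a_j\})\ge4\pi$, by lower semicontinuity on open sets. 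So if any singular point exists we are in alternative (b). It remains to prove (a) when every point is regular.

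Assume every point is regular. By compactness, for each $x$ there are $r_x>0$ and $\eps_x>0$ such that $\limsup_n\int_{B^g_{2r_x}(x)}|f_n|\dV\le4\pi-2\eps_x$. Here we use $\mu(\overline{B_{2r}})\to\mu(\{x\})$ as $r\to0$ together with upper semicontinuity on closed sets. Cover $M$ by finitely many balls $B^g_{r_x}(x)$. On each such ball, apply \eqref{greenrep} after subtracting the mean $\int_M f_n\dV$:
\[
\zeta_n(z)-\bar\zeta_n=\int_M G(z,y)f_n(y)\dV(y)
\]
(the constant term vanishes because $\int_M G(z,y)\dV(y)=0$). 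Split the integral into the part over $B^g_{2r_x}(x)$ and the part over its complement. For $z\in B^g_{r_x}(x)$, the far part is bounded by $C(r_x)\|f_n\|_{L^1}$, using \eqref{greenest} since $d_g(z,y)\ge r_x$. The near part is bounded in absolute value by
\[
\frac{1}{2\pi}\int_{B^g_{2r_x}}\log\frac{1}{d_g(z,y)}|f_n(y)|\dV(y)+C\|f_n\|_{L^1}.
\]

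Now use Jensen's inequality with the probability measure $|f_n|\dV/\|f_n\|_{L^1(B_{2r_x})}$, whose total mass is $m_n\le4\pi-\eps_x$. This gives
\[
\int_{B^g_{r_x}}e^{q|\zeta_n-\bar\zeta_n|}\le C\int_{B^g_{2r_x}}\frac{|f_n(y)|}{m_n}\int_{B^g_{r_x}} d_g(z,y)^{-qm_n/(2\pi)}\dV(z)\,\dV(y).
\]
The inner integral is uniformly bounded provided $qm_n/(2\pi)<2$. This holds for some $q>1$ because $m_n\le4\pi-\eps_x$, which is exactly the Brezis--Merle threshold. Summing over the finite cover and taking $q$ as the minimum over the balls gives (a), in fact with $|\zeta_n-\bar\zeta_n|$ in the exponent.

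The main obstacle is making sure the threshold is exactly $4\pi$ with the sharp constant $1/(2\pi)$ in \eqref{greenest}. The bounded error terms in the Green expansion only add multiplicative constants $e^{qC\|f_n\|_{L^1}}$, which are harmless, so the argument closes. A second point to check is the passage to a subsequence: both the weak limit $\mu$ and the $\liminf$ in (b) are taken along that same subsequence, so the dichotomy is consistent.
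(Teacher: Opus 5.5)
Your proposal is correct and is essentially the argument the paper relies on: the paper gives no proof of its own, citing Lemma 2.4 of Malizia's paper, which rests on the Brezis--Merle estimate. Like that argument, you split the weak limit of $|f_n|\,dV_g$ into atoms of mass $\ge 4\pi$, which give (b), versus regular points. At regular points you recover exponential integrability with some $q>1$ from the Green representation and Jensen's inequality below the $4\pi$ threshold, which is how Brezis--Merle prove their inequality.

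Two harmless details could be made explicit. First, $\int_M f_n\,dV_g=0$ holds automatically on a closed surface. Second, the bound $|G(z,y)|\le \frac{1}{2\pi}\log\frac{1}{d_g(z,y)}+C$ uses that $d_g$ is bounded on the compact $M$ (or that $r_x$ is small).
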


Let now again $(u_n)$ be as in \eqref{PSeq}-\eqref{normalization}: we are going to show next an alternative to the compactness behaviour, again in the spirit of \cite[Thm. 3]{brezis-merle-1991} or \cite[Thm. 1.1]{ohtsukasuzuki-2003-Calcvar}.

\begin{lemma}\label{initialconcentration}
If $(u_n)$ blows up, then the measures $\lambda_nhe^{u_n}\dV$ concentrate somewhere. More precisely, there exists $a\in M$ such that for every $r>0$,
\[
        \liminf_{n\to\infty}\int_{B^g_r(a)}\lambda_nhe^{u_n}\dV\ge4\pi.
\]
In particular, $\lambda_\infty\ge 4\pi$. 
\end{lemma}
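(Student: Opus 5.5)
We argue by contraposition: assume the measures $\lambda_n h e^{u_n}\dV$ have no concentration point with mass at least $4\pi$, and show that $(u_n)$ is precompact in $H^1(M)$. We apply Lemma~\ref{cc} to $\zeta_n:=v_n$ with $f_n:=\lambda_n(he^{u_n}-1)$. By \eqref{normalization} and the boundedness of $\lambda_n$ (if $\lambda_n$ were unbounded one should rescale, but $\lambda_n\to\lambda_\infty<\infty$ by assumption), we have $\|f_n\|_{L^1(M)}\le 2\lambda_n\le C$. The $-\lambda_n$ part of $f_n$ carries no concentration, since $\int_{B^g_r}\lambda_n\dV\to0$ as $r\to0$. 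Hence if alternative (b) held at some point $a$, then for each $r>0$ we would have $\liminf_n\int_{B^g_r(a)}\lambda_nhe^{u_n}\dV\ge 4\pi-C r^2$. Letting $r$ shrink, and using monotonicity in $r$ (fix $r$, then apply the bound for all smaller $\rho\le r$), this gives exactly the claimed concentration. So it suffices to show that alternative (a) leads to compactness.

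Assume (a): $\int_M e^{q(v_n-\bar v_n)}\dV\le C$ for some $q>1$. The first step is to bound $\bar v_n=\bar u_n$ from above. By Lemma~\ref{lemclose}, $u_n-v_n\to0$ in $H^1$ with zero mean, so Lemma~\ref{exppqM} (with $p=1<q$, $E_n=M$, $g_n=v_n-\bar v_n$, $f_n=u_n-\bar v_n$) gives $\int_M e^{u_n-\bar v_n}\dV\le C$. Jensen's inequality and \eqref{normalization} give $1=\int_M he^{u_n}\ge \min h\, e^{\bar u_n}$, so $\bar u_n\le C$. Next, $e^{q v_n}=e^{q\bar v_n}e^{q(v_n-\bar v_n)}$ is bounded in $L^1$, hence $he^{v_n}$ is bounded in $L^q$. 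Applying Lemma~\ref{exppqM} again with $p=1$ shows $\|e^{u_n}-e^{v_n}\|_{L^1}\to0$. To get the $L^q$-type control on $e^{u_n}$ itself, choose $1<p<q$ and apply Lemma~\ref{exppqM} with exponent $p$, so that $e^{u_n}$ is bounded in $L^{p}(M)$, $p>1$.

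Now $-\Delta_g v_n=\lambda_n(he^{u_n}-1)$ has right-hand side bounded in $L^p$ with $p>1$, so elliptic estimates bound $v_n-\bar v_n$ in $W^{2,p}\hookrightarrow C^{0,\alpha}\cap H^1$. This yields $|v_n-\bar v_n|\le C$. Then $1=\int he^{u_n}$, together with $\|e^{u_n}-e^{v_n}\|_{L^1}\to0$, gives $\int he^{v_n}\to 1$, i.e.\ $e^{\bar v_n}\int he^{v_n-\bar v_n}=1+o(1)$. Therefore $\bar v_n$ is also bounded below, so $v_n$ is bounded in $L^\infty$ and in $W^{2,p}$. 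By compactness of $W^{2,p}\hookrightarrow H^1$, a subsequence of $v_n$ converges in $H^1(M)$, and by Lemma~\ref{lemclose} so does $u_n$. This contradicts blow-up. Finally, concentration of mass at least $4\pi$ on arbitrarily small balls, together with total mass $\lambda_n\int he^{u_n}=\lambda_n$, gives $\lambda_\infty\ge 4\pi$.

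The main obstacle is the bookkeeping between $u_n$ and $v_n$. Lemma~\ref{cc} controls $v_n$ only, while the concentration and the normalization concern $e^{u_n}$, and $u_n$ has no pointwise bounds. The key point is to use Lemma~\ref{exppqM} with a strictly smaller exponent to transfer the $L^q$ bound from $e^{v_n}$ to $e^{u_n}$, and to handle the mean $\bar v_n$ via Jensen's inequality and the normalization.
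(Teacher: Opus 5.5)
Your proof is correct and follows essentially the paper's route: you apply Lemma~\ref{cc} to $v_n$, bound $\bar v_n$ from above by Jensen, transfer the $L^q$ bound from $e^{v_n}$ to $e^{u_n}$ via Lemma~\ref{exppqM} with a smaller exponent, and conclude by $W^{2,p}$ compactness and Lemma~\ref{lemclose}, noting that the $-\lambda_n$ term cannot concentrate. The only difference is how you bound $\bar v_n$ from below: the paper splits into cases according to whether $\bar v_n\to-\infty$, while you use the oscillation bound $|v_n-\bar v_n|\le C$ together with the normalization. (Your first application of Lemma~\ref{exppqM}, giving $\int_M e^{u_n-\bar v_n}\dV\le C$, already yields that lower bound directly.)
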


\begin{proof}
Apply Lemma~\ref{cc} to $\zeta_n=v_n$, with $v_n$ as in \eqref{vn}, and $f_n=\lambda_n(he^{u_n}-1)$. Suppose by contradiction that  compactness  holds. Then there are $q>1$ and $C>0$ such that
\[
        \int_Me^{q(v_n-\bar v_n)}\dV\le C.
\]
The normalization \eqref{normalization} and Jensen's inequality give $\bar v_n = \bar u_n\le C$. After passing to a subsequence, either $\bar v_n\to-\infty$, or $\bar v_n$ is bounded from below.

In the first case, $e^{v_n}\to0$ in $L^p(M)$ for every $1\le p<q$. Lemma~\ref{exppqM}, applied with exponent $1$, gives $e^{u_n}-e^{v_n}\to0$ in $L^1(M)$, contradicting $\int_Mhe^{u_n}\dV=1$.

In the second case, $e^{v_n}$ is bounded in $L^q(M)$. Fix $p\in(1,q)$. Lemma~\ref{exppqM}, now applied with exponent $p$, gives a uniform $L^p$ bound for $e^{u_n}$. Equation~\eqref{vn} and elliptic regularity imply that $(v_n)$ is bounded in $W^{2,p}(M)$ and hence precompact in $H^1(M)$. Lemma~\ref{lemclose} then implies that $(u_n)$ is precompact in $H^1(M)$, contradicting blow-up.

Therefore the concentration alternative in Lemma~\ref{cc} holds. Since the negative part of $\lambda_n(he^{u_n}-1)$ is uniformly absolutely continuous with respect to volume, the concentration is carried by $\lambda_nhe^{u_n}\dV$.

The last claim follows at once from the normalization \eqref{normalization}.
\end{proof}

\begin{rmk}
In the proof of Lemma \ref{initialconcentration} we considered both the cases $\bar v_n\to -\infty$ and $\bar v_n \ge -C$. For a blowing-up Palais-Smale sequence $(u_n)$ with $J_{\lambda_n}(u_n)\to c$, the second case is always ruled out since $\bar u_n\to -\infty$. Indeed, for a blowing-up sequence $(u_n)\subset H^1(M)$, $\|\nabla_g u_n\|_{L^2}\to+\infty$, otherwise the Moser-Trudinger inequality gives uniform bounds on $e^{pu_n}$ in $L^1(M)$ for every $1\le p<+\infty$, which easily implies that a subsequence $(u_{n'})$ converges in $H^1(M)$. As we are not assuming  $J_{\lambda_n}(u_n)$  bounded, we have to consider both cases.
\end{rmk}

\section{Extraction of bubbles} \label{sec:extract}

In the following proposition we perform an exhaustive extraction of bubbles, in the sense that, inductively, we extract a new bubble as in \eqref{bubbleextractionprofile}-\eqref{defVi} below, until the uniform estimate \eqref{nofurther} is satisfied. This estimate can be seen as an $L^p$ version interpolating between the case $p=1$ treated in Section 5 of \cite{malizia-2025-ACV}, and the case $p=+\infty$ which holds for exact solutions of the mean-field equation (see \cite[Eq. (9)]{lishafrir-1994}, or \cite[Theorem 0.2(c)]{li-1999-CMP-harnacktype}), giving uniform bounds on the quantity
\begin{equation}\label{nobounds}
d_n^2(x)e^{u_n(x)},
\end{equation}
where $d_n$ is defined in \eqref{defdn} below.

On the one hand, such $L^\infty$ bounds do not hold for Palais-Smale sequences, as the error term $R_n\to 0$ in $H^{-1}(M)$ can uncontrollably alter $u_n$ pointwise; on the other hand, the $L^1$ bounds of \cite{malizia-2025-ACV} are not sufficient for the gradient estimates  we will need later. Hence the need for the $L^p$-version, with $p>2$. Notice that if the constant $C_*$ below could be chosen independent of $p$, as $p\to +\infty$ the bound \eqref{nofurther}
would imply an $L^\infty$ bound on \eqref{nobounds}.

\begin{prop}\label{propextraction}
Assume that $(u_n)$ blows up. Fix $p\in(2,+\infty)$ and $\rho_*>0$. Then, up to a subsequence, there exist an integer $\ell\geq1$, points $x_{i,n}\to x^{(i)}\in M$, radii $r_{i,n}\downarrow0$, constants $c_{i,n}\in\mathbb R$, and a constant $C_*>0$ such that the following properties hold.

\begin{enumerate}[label=\textup{(\roman*)}]
\item For every $R>0$,
\begin{equation}\label{separ}
        B^g_{Rr_{i,n}}(x_{i,n})\cap B^g_{Rr_{j,n}}(x_{j,n})=\emptyset,
        \qquad i\ne j,
\end{equation}
for all sufficiently large $n$, and for some $0< \alpha <1-\frac{2}{p}$,
\begin{equation}\label{bubbleextractionprofile}
     \widetilde V_{i,n}:=   v_n(\exp_{x_{i,n}}(r_{i,n}\cdot))+2\log r_{i,n}+c_{i,n}
        \to V_i
        \quad\hbox{in }C^{1,\alpha}_{\loc}(\R^2),
\end{equation}
where for some $\delta_i>0$, $\mu_i>0$, $x_{0,i}\in \R^2$, $V_i$ has the form
\begin{equation}\label{defVi}
V_i(x)=\log \left(\frac{8\delta_i^2}{\mu_i(\delta_i^2+|x-x_{0,i}|^2)^2}\right),
\end{equation}
and satisfies
\begin{equation}\label{volVi}
-\Delta V_i=\mu_i e^{V_i}\quad \text{in }\R^2,\quad \text{and}\quad \int_{\R^2}\mu_i e^{V_i}=8\pi,
\end{equation}
with $\mu_i=\lambda_\infty h(x^{(i)})$. Moreover,
\begin{equation}\label{convun8pi}
\lim_{R\to\infty}\lim_{n\to\infty}\int_{B^g_{Rr_{i,n}}(x_{i,n})}\lambda_n h e^{u_n}dV_g= \lim_{R\to\infty}\lim_{n\to\infty}\int_{B^g_{Rr_{i,n}}(x_{i,n})}\lambda_n h e^{v_n+c_{i,n}}dV_g=8\pi.
\end{equation}
\item Setting
\begin{equation}\label{defdn}
 d_n(x):=\min_{1\le i\le\ell}d_g(x,x_{i,n}),
\end{equation}
we have, for every $n$ sufficiently large,
\begin{equation}\label{nofurther}
 r^{2}
 \left(
 \fint_{B^g_r(x)}e^{pu_n}\,dV_g
 \right)^{1/p}
 \le \rho_*
 \qquad
 \text{whenever }
 0<r\le \frac{d_n(x)}{C_*}. 
\end{equation}
\end{enumerate}
\end{prop}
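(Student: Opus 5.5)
We argue by an inductive selection procedure. At each step we have already extracted bubbles $(x_{i,n},r_{i,n},c_{i,n})_{i\le k}$ satisfying (i); when $k=0$ we set $d_n\equiv+\infty$, or more precisely we replace $d_n(x)$ by a fixed small radius $r_0$. If \eqref{nofurther} fails for every choice of $C_*$ along a subsequence, we extract a new bubble. Each bubble carries conformal volume $8\pi$ by \eqref{convun8pi}, the balls are disjoint, and $\int_M\lambda_nhe^{u_n}=\lambda_n\to\lambda_\infty$. Hence the procedure stops after at most $\lambda_\infty/8\pi+1$ steps, and at that point \eqref{nofurther} holds with some $C_*$. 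The first step is nonempty: blow-up and Lemma~\ref{initialconcentration} rule out \eqref{nofurther} with $d_n\equiv r_0$ for every $C_*$. Indeed, otherwise $e^{u_n}$ would be uniformly bounded in $L^p$ on small balls, hence in $L^1$ on every $B_r$ with mass $\lesssim \rho_* r^{2-2}$. With $\rho_*$ small this contradicts the $4\pi$ concentration once $r$ is comparable to $r_0/C_*$. More simply, a uniform $L^p$ bound on $e^{u_n}$ with $p>1$ gives compactness, as in the proof of Lemma~\ref{initialconcentration}.

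\textbf{Selection of the new bubble.} Suppose that for $C_*=n$ (say) there exist $x_n$ and $r_n\le d_n(x_n)/n$ with $r_n^2(\fint_{B_{r_n}(x_n)}e^{pu_n})^{1/p}>\rho_*$. We use a standard maximization (Schoen-type point picking) on the quantity
\[
F_n(x):=\sup\Bigl\{ r^2\Bigl(\fint_{B_r(x)}e^{pu_n}\Bigr)^{1/p}\;:\; r\le \tfrac12 (d_n(x)-\dots)\Bigr\},
\]
or more concretely we choose $(x_n,r_n)$ nearly minimizing $r$ among pairs violating the bound with threshold $\rho_*$, and restricted to $r\le d_n(x)/C$. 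Then at scale $r_n$ around $x_n$, every ball $B_{r}(y)$ with $y\in B_{Rr_n}(x_n)$ and $r\le\eps r_n$ satisfies the bound, because $d_n(y)/r_n\to\infty$. We set $r_{k+1,n}:=r_n$, $x_{k+1,n}:=x_n$, and let $c_{k+1,n}$ be the average of $u_n-v_n$ on $B_{r_n}(x_n)$. After rescaling, $\tilde u_n(y)=u_n(\exp_{x_n}(r_ny))+2\log r_n$ has $\int_{B_\eps(y)}e^{p\tilde u_n}\le C\rho_*^p\eps^{2-2p}$ uniformly, so $e^{\tilde u_n}$ is bounded in $L^p_{\loc}(\R^2)$, while $\int_{B_1}e^{p\tilde u_n}\gtrsim\rho_*^p$. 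The rescaled $w_n=u_n-v_n$ has gradient tending to $0$ in $L^2_{\loc}$ by conformal invariance of the Dirichlet energy. Applying Lemma~\ref{exppqmean} (with Remark~\ref{rmkmetrics}) at exponents $p'<p$, $e^{\widetilde V_{n}}$ is bounded in $L^{p'}_{\loc}$, where $p'>2$ is taken close to $p$, and $e^{\tilde u_n}-e^{\widetilde V_n}\to0$ in $L^{p'}_{\loc}$. The equation $-\Delta \widetilde V_n=\lambda_n h\,e^{\tilde u_n-c}\,(1+o(1))-\lambda_nr_n^2$ then has right-hand side bounded in $L^{p'}_{\loc}$. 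The normalization by $c_{k+1,n}$ controls the mean of $\widetilde V_n$ on $B_1$ through Jensen and the lower bound. Therefore $W^{2,p'}$ estimates give $C^{1,\alpha}_{\loc}$ convergence to $V$ solving $-\Delta V=\lambda_\infty h(x^{(k+1)})e^V$ with $\int e^V<\infty$ and $V\not\equiv-\infty$. Proposition~\ref{classification} gives \eqref{defVi}--\eqref{volVi}.

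\textbf{The mass identities and separation.} The $C^{1,\alpha}$ convergence plus Lemma~\ref{exppqmean} give the two limits in \eqref{convun8pi} on $B_{Rr_n}$ for each fixed $R$, with value $\int_{B_R}\mu e^V\to8\pi$ as $R\to\infty$. Separation \eqref{separ} holds for the new bubble since $r_n=o(d_n(x_n))$, and a center at distance $o(r_n)$ from an old center is excluded by the same inequality. Separation from a possibly larger old bubble also needs $d_g(x_{k+1,n},x_{i,n})/r_{i,n}\to\infty$. I expect to handle this by noting that on $B_{Rr_{i,n}}(x_{i,n})$ the convergence to the smooth old profile forces \eqref{nofurther} at small scales once $C_*$ is large, since $\fint e^{pu}$ is comparable to $r_{i,n}^{-2p}$ there and $r\le d_n/C_*$ makes $r^2 r_{i,n}^{-2}$ small. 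This step, together with ensuring that the new profile is not degenerate, i.e.\ excluding $\widetilde V_n\to-\infty$ while the $L^p$ lower bound persists, is the main obstacle. It requires carefully combining the choice of $c_{n}$, the $H^1$ smallness of $w_n$, and Lemma~\ref{exppqmean} to transfer the lower bound on $e^{p\tilde u_n}$ to $e^{\widetilde V_n}$.
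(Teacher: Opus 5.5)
Your architecture is the paper's: extract at the minimal scale where the scale-invariant $L^p$ quantity reaches $\rho_*$ away from the existing centers, rescale, pass between $e^{U_n}$ and $e^{\widetilde V_n}$ with Lemma~\ref{exppqmean}, classify the limit, and terminate by mass counting. Here $U_n$ is your rescaled $\tilde u_n$. Your nonemptiness argument via compactness is fine. The separation step you worry about is easy and is exactly what the paper does: if $d_g(x_{k+1,n},x_{i,n})\le Cr_{i,n}$, then $r_{k+1,n}\le d_n(x_{k+1,n})/L_n=o(r_{i,n})$, and the local $L^p$ bound for the old rescaled $U_{i,n}$ on $B_{C+1}(0)$ gives $\rho_*\le C(r_{k+1,n}/r_{i,n})^{2-2/p}\to0$.

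The genuine gap is the compactness of the new rescaled sequence $\widetilde V_n$, which you leave open. The step ``the normalization controls the mean of $\widetilde V_n$ on $B_1$ through Jensen and the lower bound; therefore $W^{2,p'}$ estimates give $C^{1,\alpha}_{\loc}$ convergence'' fails on two counts. First, Jensen bounds $\fint_{B_1}\widetilde V_n$ only from above. The lower bound $\int_{B_1}e^{pU_n}\gtrsim\rho_*^p$ gives no lower bound on a mean, since that mass may sit on a very small set. Second, interior $W^{2,p}$ estimates on $B_R(0)$ need an $L^1(B_{2R}(0))$ bound on $\widetilde V_n$, i.e. control of its harmonic part on arbitrarily large balls, and a single average on $B_1$ does not give that.

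The missing ingredient is the Harnack dichotomy used in the paper, after Brezis--Merle and Adimurthi. On each $B_1(z)\subset B_R(0)$ write $\widetilde V_n=W_n+Z_n$. Here $W_n$ is the Dirichlet solution, bounded in $C^1$ because the right-hand side $\lambda_nh(F_n)e^{U_n}-\lambda_nr_n^2$ is bounded in $L^p$ with $p>2$, and $Z_n$ is $\hat g_n$-harmonic. Since $\int e^{U_n}\le C$ and $U_n-\widetilde V_n\to0$ in $H^1$, $\|Z_n^+\|_{L^1}$ is bounded. Subharmonicity of $Z_n^+$ then gives $\sup_{B_{3/4}(z)}Z_n\le A$. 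The Harnack inequality for $A+1-Z_n$ shows that either $Z_n$ is bounded, which gives $C^{1,\alpha}$ bounds, or $Z_n\to-\infty$ uniformly. The two alternatives are incompatible on overlapping balls, so one of them holds on all of $\R^2$.

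The $-\infty$ branch is excluded by using Lemma~\ref{exppqmean} in the opposite direction to yours. In that branch $\int_{B_1}e^{q\widetilde V_n}\to0$ for some $q>p$, so taking $g_n=\widetilde V_n$ gives $\int_{B_1}e^{pU_n}\to0$, contradicting the choice of the scale. Your setup, with $g_n=U_n$ at an exponent $p'<p$, only bounds $e^{\widetilde V_n}$ from above. It cannot carry the lower bound on $\int e^{pU_n}$ over to $e^{\widetilde V_n}$.
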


\begin{proof}
We divide the proof into three steps.

\emph{Step 1.}
By Lemma~\ref{initialconcentration}, there exist $a\in M$ and radii $\varrho_n\downarrow0$ such that
\[
\int_{B^g_{\varrho_n}(a)} \lambda_nhe^{u_n}\,\dV \geq 2\pi
\]
for all sufficiently large $n$. Hence
\[
\int_{B^g_{\varrho_n}(a)}e^{u_n}\,\dV \geq \frac{2\pi} {(\sup_n\lambda_n)\|h\|_{L^\infty(M)}}.
\]
By Hölder's inequality and the estimate $\operatorname{Vol}_g(B^g_r(x))\leq C_g r^2$, there exists $\kappa_*>0$, independent of $n$, such that
\[ \varrho_n^2 \left( \fint_{B^g_{\varrho_n}(a)} e^{pu_n}\,\dV \right)^{1/p} \geq\kappa_*.
\]
Define
\[
m_n(r) := \sup_{x\in M} r^2 \left( \fint_{B^g_r(x)}e^{pu_n}\,\dV \right)^{1/p}.
\]
For every fixed $R\geq1$ and all sufficiently large $n$, since $\Vol_g(B_r^g)/\Vol_{g_E}(B_r)\to 1$ as $r\to 0^+$, we got
\[
\begin{aligned}
m_n(R\varrho_n) &\geq (R\varrho_n)^2 \left( \fint_{B^g_{R\varrho_n}(a)} e^{pu_n}\,\dV \right)^{1/p}\\ &\geq R^{2-\frac2p} \varrho_n^2 \left( \fint_{B^g_{\varrho_n}(a)} e^{pu_n}\,\dV \right)^{1/p}(1+o(1))\\ &\geq \kappa_*R^{2-\frac2p}(1+o(1)). \end{aligned} \] Since $\rho_*>0$ is fixed, we may choose $R_*\geq1$ such that $\kappa_*R_*^{2-\frac2p}>\rho_*$.
Set $s_n:=R_*\varrho_n$.
Then $s_n\downarrow0$ and $m_n(s_n)>\rho_*$ for all sufficiently large $n$. Since $m_n(r)\to0$ as $r\downarrow0$, define
\[
r_n := \inf\bigl\{ r\in(0,s_n]: m_n(r)\geq\rho_* \bigr\}.
\]
By continuity, $ m_n(r_n)=\rho_*, $ and, since $r_n\leq s_n\to0$, we get
$r_n\to0.$
Choose now $x_n\in M$ such that
\begin{equation}\label{eq:firstscale}
r_n^{2} \left( \fint_{B^g_{r_n}(x_n)} e^{pu_n}\,\dV \right)^{1/p} =\rho_*.
\end{equation}
By compactness, up to a subsequence, we can further assume that
\begin{equation}\label{convxn}
x_n\to x^{(1)}\in M.
\end{equation}

Let
\[
        F_n(y):=\exp_{x_n}(r_ny),
        \qquad \hat g_n:=r_n^{-2}F_n^*g.
\]
By standard properties of the exponential map,
\begin{equation}\label{convgn}
\hat g_n\to g_{\R^2}\quad \text{in }C^2_{\loc}(\R^2).
\end{equation}
Set
\[
        U_n(y):=u_n(F_n(y))+2\log r_n,
        \qquad
        V_n(y):=v_n(F_n(y))+2\log r_n.
\]
Then, on $B_{\inj(M)/(2r_n)}(0)$,
\begin{equation}\label{rescaledV}
        -\Delta_{\hat g_n}V_n
        =\lambda_n h(F_n(y))e^{U_n(y)}-\lambda_nr_n^2.
\end{equation}
For every $R>0$ and every ball $B_1(z)\subset B_R(0)\subset \R^2$, we can cover $B_1(z)$ with a fixed number (independent of $z$) of balls of radius $1/2$. Using the minimality of $r_n$, the definition of $m_n$ and \eqref{convgn}, we then get
\begin{equation}\label{localLpU}
        \left(\fint_{B_1(z)}e^{pU_n}\,dy\right)^{1/p}\le C \rho_*.
\end{equation}

Fix now  $R\ge 2$ and choose constants $b_n$ such that
\[
        \fint_{B_{2R}}(U_n- \widetilde V_n )\,dy=0,
        \qquad \widetilde V_n:=V_n+b_n.
\]
Using \eqref{close} and the change of variables $x=F_n(y)$,
\[
        \int_{B_{2R}}|\nabla(U_n-\widetilde V_n)|^2\,dy
        \le C\int_{F_n(B_{2R})}|\nabla_g(u_n-v_n)|^2\dV=o(1).
\]
Poincar\'e's inequality gives
\begin{equation}\label{H1closeScaled}
        \|U_n-\widetilde V_n\|_{H^1(B_{2R})}\to0.
\end{equation}
In the following we will borrow ideas from \cite{Adimurthi2006}, see also \cite{martinazzi-2009-JFA}. Let $B_1(z)\subset B_R(0)$ and let $W_n$ solve
\[
        -\Delta_{\hat g_n}W_n=\lambda_nh(F_n)e^{U_n}-\lambda_nr_n^2
        \quad\hbox{in }B_1(z),
        \qquad W_n=0\quad\hbox{on }\partial B_1(z).
\]
By \eqref{localLpU}, the fact that $p>2$, and the uniform ellipticity of $\Delta_{\hat g_n}$, we have 
\begin{equation}\label{WnC1}
        \|W_n\|_{C^1(B_1(z))}\le C(R,\rho_*,p).
\end{equation}
Set $Z_n:=\widetilde V_n-W_n$. Then $Z_n$ is $\hat g_n$-harmonic in $B_1(z)$. Moreover \eqref{localLpU}, \eqref{H1closeScaled}, and \eqref{WnC1} imply
\begin{equation}\label{Zplus}
        \|Z_n^+\|_{L^1(B_1(z))}\le C(R,\rho_*,p).
\end{equation}
Indeed $\widetilde V_n=U_n+o_{H^1}(1)$ and $e^{pU_n}$ is locally bounded in $L^1$, so the positive part cannot have unbounded integral on a fixed ball.

Now set
$$\beta_n:=\|Z_n\|_{L^1(B_{1/2}(z))}.$$
Up to extracting a subsequence, there are two alternatives.

\medskip

\noindent\textbf{Case 1: $\beta_n\le C$.} Then elliptic  estimates (with respect to $\Delta_{\hat g_n}$) give uniform $C^{1,\alpha}$-bounds for $Z_n$, hence also for $\widetilde V_n$, in $B_{1/4}(z)$.

\medskip

\noindent\textbf{Case 2: $\beta_n\to+\infty$.}
Since $\Delta_{\hat g_n}Z_{n}=0$, the positive part
$Z_{n}^{+}$ is weakly $\hat g_n$-subharmonic. Indeed, applying
$\Delta_{\hat g_n}$ to a smooth convex nondecreasing approximation
of $t\mapsto t^{+}$ and passing to the limit gives
\[
        \Delta_{\hat g_n}Z_{n}^{+}\ge0,\quad \text{in }B_1(z)
\]
in the sense of distributions. Then, by elliptic estimates, see e.g. \cite[Thm. 8.17, 8.20]{gilbargtrudinger}: 
\[
        \sup_{B_{3/4}(z)}Z_{n}
        \le
        \sup_{B_{3/4}(z)}Z_{n}^{+}
        \le
        C\|Z_{n}^{+}\|_{L^1(B_1(z))}
        \le A.
\]
Set $H_{n}:=A+1-Z_{n}$. Then $H_{n}$ is positive and
$\hat g_n$-harmonic in $B_{3/4}(z)$, and the uniform Harnack
inequality gives
\[
        \sup_{B_{1/2}(z)}H_{n}
        \le C_H\inf_{B_{1/2}(z)}H_{n}.
\]
This implies that  $\beta_n\asymp H_{n}(z)$.
Hence, after passing to a subsequence,
\[
        \frac{H_{n}}{\beta_n}\to\Psi
        \quad\text{in }C^1(B_{1/4}(z)),
\]
where $\Psi$ is a positive nontrivial harmonic function. Therefore
\[
        \frac{Z_{n}}{\beta_n}\to-\Psi
        \quad\text{in }C^1(B_{1/4}(z)),
\]
and in particular
\begin{equation}\label{negativealternative}
        Z_n\to-\infty\quad\hbox{uniformly in }B_{1/4}(z).
\end{equation}
Then also 
\begin{equation}\label{negativealternative2}
        \tilde V_n\to-\infty\quad\hbox{uniformly in }B_{1/4}(z).
\end{equation}

\medskip

Case $1$ and Case 2 are incompatible on overlapping balls; hence either Case 1 holds for every $B_1(z)\subset B_R(0)$ and $\widetilde V_n$ is locally uniformly bounded in $C^{1,\alpha}(B_{R-1}(0))$, or Case 2 holds for every $B_1(z)\subset B_R(0)$ and $\widetilde V_n$ converges to $-\infty$ locally uniformly in $B_{R-1}(0)$.

The latter alternative is impossible. Indeed, in this case, we can take $q>p$ and have,
\[
        \int_{B_1(0)}e^{q\widetilde V_n}\,dy\to0.
\]
Then, by \eqref{H1closeScaled} and Lemma~\ref{exppqmean},
\[
        \int_{B_1(0)}|e^{pU_n}-e^{p\widetilde V_n}|\,dy\to0.
\]
Thus
$$\int_{B_1(0)}e^{pU_n}\,dy\to0,$$
contradicting \eqref{eq:firstscale}. Therefore $\widetilde V_n$ (the whole sequence) is bounded in $C^{1,\alpha}_{\loc}(\R^2)$.

Observe that $b_n=b_{n,R}$ was defined depending on $R$. Now,
to make the normalization independent of the radius, fix
$D=B_2(0)$ and define
\[
        c_n:=\fint_D(U_n-V_n)\,dy,
\]
and rename $\widetilde V_n:=V_n+c_n$.
Then
\[
        \fint_D\bigl(U_n-\widetilde V_n\bigr)\,dy=0.
\]
Since $\nabla(U_n-V_n)\to0
        \quad\hbox{in }L^2_{\loc}(\R^2)$,
the Poincar\'e inequality gives
\begin{equation}\label{UnVn0}
U_n-\widetilde V_n \to 0
        \quad\hbox{in }H^1_{\loc}(\R^2).
\end{equation}Moreover, if
\[
        b_{n,R}:=\fint_{B_{2R}(0)}(U_n-V_n)\,dy
\]
is the previous normalization, then       $b_{n,R}-c_n\to0$
for every fixed $R$. Hence the local bounds obtained for
$V_n+b_{n,R}$ also hold for $\widetilde V_n=V_n+c_n$. A diagonal extraction therefore gives
\begin{equation}\label{diagbubble2}
        \widetilde V_n\to V\quad\hbox{in }C^{1,\alpha}_{\loc}(\R^2).
\end{equation}

Moreover, \eqref{rescaledV} implies
\[
 -\Delta_{\hat g_n}\widetilde V_n
 =
 \lambda_nh(F_n)e^{U_n}-\lambda_nr_n^2.
\]
For every fixed $R>0$,   $F_n(y)=\exp_{x_n}(r_ny)\to x^{(1)}$
uniformly for $y\in B_R(0)$, hence
\[
        \lambda_nh\circ F_n\to
        \lambda_\infty h(x^{(1)})=:\mu
\]
locally uniformly in $\R^2$. In particular, since $\lambda_\infty>0$ by Lemma \ref{initialconcentration}, we have $\mu>0$.

Now \eqref{UnVn0}, \eqref{diagbubble2} and Lemma \ref{exppqmean} give
\begin{equation}\label{eUnetildeVn}
     e^{U_n}-e^{\widetilde V_n}
        \to 0
        \quad\hbox{in }L^1_{\loc}(\R^2),
\end{equation}
hence $e^{U_n}\to e^V$ in $L^1_{\loc}(\R^2)$. Using
$\hat g_n\to g_{\R^2}$ in $C^2_{\loc}$ and
$\lambda_nr_n^2\to0$, we obtain
\[
        -\Delta V=\mu e^V
        \quad\hbox{in }\R^2,
        \qquad
        \mu=\lambda_\infty h(x^{(1)})>0.
\]
Finally, since
\[
        dV_{\hat g_n}=r_n^{-2}F_n^*(dV_g),
        \qquad
        e^{U_n}=r_n^2e^{u_n\circ F_n},
\]
we have, for every fixed $R>0$,
\begin{align*}
 \int_{B_R(0)}
 h(F_n)e^{U_n}\,dV_{\hat g_n}
 =
 \int_{F_n(B_R(0))}he^{u_n}\,dV_g 
 \le
 \int_Mhe^{u_n}\,dV_g=1.
\end{align*}
Passing to the limit gives
\[
        h(x^{(1)})\int_{B_R(0)}e^V\,dy\le1.
\]
Letting $R\to+\infty$, we conclude that
\[
        \int_{\R^2}e^V\,dy<+\infty,
        \qquad
        \int_{\R^2}\mu e^V\,dy\le\lambda_\infty.
\]
By Proposition \ref{classification}, $V$ is as in \eqref{defVi} and \eqref{volVi} holds.

Now, using \eqref{eUnetildeVn} and \eqref{diagbubble2} we infer
\[\begin{split}\lim_{n\to+\infty}\int_{B_{Rr_n}^g(x_n)}\lambda_n he^{u_n}dV_g &=\lim_{n\to+\infty}\int_{B_R(0)}\lambda_n h(F_n) e^{U_n}dx\\
&= \lim_{n\to+\infty}\int_{B_R(0)}\lambda_n h(F_n)e^{\widetilde V_n}dx=\int_{B_R}\mu e^Vdx.
\end{split}\]
Letting $R\to \infty$ and observing that 
\[\int_{B_R(0)}\lambda_n h(F_n)e^{\widetilde V_n}dx=\int_{B^g_{Rr_n}(x_n)}\lambda_n he^{v_n+c_n}dx\]
completes the proof of \eqref{convun8pi}.

\medskip

\emph{Step 2.} We now proceed by induction. Let $r_{1,n}:= r_n$ and $x_{1,n}:= x_n$ be as built in Step 1. For any $k\ge 1$ suppose that bubbles $(x_{i,n},r_{i,n})$, $i=1,\dots,k$, have already been extracted and satisfy \eqref{separ}-\eqref{convun8pi}. Define
\[
        d_{k,n}(x):=\min_{1\le i\le k}d_g(x,x_{i,n}).
\]
If \eqref{nofurther} fails for all $C_*>0$, then one can find $L_n\to\infty$, points $y_n\in M$, and scales $s_n>0$ such that
\begin{equation}\label{newscale}
        s_n\le \frac{d_{k,n}(y_n)}{L_n}\to 0,
        \qquad
        s_n^{2}\left(\fint_{B^g_{s_n}(y_n)}e^{pu_n}\dV\right)^{1/p}=\rho_*,
\end{equation}
and $s_n$ is minimal among all choices of $(y_n,s_n)$ satisfying \eqref{newscale}.

Rescale around $y_n$ by $s_n$ and, similar to Step 1, write
\[
 F_n(w):=\exp_{y_n}(s_nw),\qquad
 \widehat g_n:=s_n^{-2}F_n^*g,\qquad
 U_n(w):=u_n(F_n(w))+2\log s_n.
\]
Fix $R\ge2$. We claim that, for all sufficiently large $n$ and every
 ball $B_1(z)\subset B_R(0) \subset \R^2$,
\begin{equation}\label{localLpUind}
 \left(
 \fint_{B_1(z)}e^{pU_n}\,dw
 \right)^{1/p}
 \le C\rho_*.
\end{equation}
By minimality of $s_n$ we have
\begin{equation}\label{minimalityconsequence}
0<r\le s_n,
 \quad
 r\le\frac{d_{k,n}(x)}{L_n}\quad \Rightarrow\quad  r^2
 \left(
 \fint_{B_r^g(x)}e^{pu_n}\,dV_g
 \right)^{1/p}
 \le\rho_*.
\end{equation}
Let now $B_1(z)\subset B_R(0)$. For every $w\in B_1(z)$, the
$1$-Lipschitz property of $d_{k,n}$ and the definition of $s_n$ give,
provided $L_n\ge2R$,
\[
\begin{split}
 d_{k,n}(F_n(w))
 &\ge d_{k,n}(y_n)-d_g(F_n(w),y_n)\\
 &=d_{k,n}(y_n)-s_n|w|\\
 &\ge L_ns_n-Rs_n
 \ge\frac12L_ns_n.
\end{split}
\]
Consequently
\[
 \frac{s_n}{2}
 \le\frac{d_{k,n}(F_n(w))}{L_n}.
\]

Cover $B_1(z)$ by a fixed number $N$ of Euclidean balls
$B_{1/4}(z_j)$, with $z_j\in B_1(z)$. Since
$\widehat g_n\to g_{\mathbb R^2}$ uniformly on $B_R(0)$, for all
sufficiently large $n$,
\[
 F_n\bigl(B_{1/4}(z_j)\cap B_1(z)\bigr)
 \subset B^g_{s_n/2}(F_n(z_j)).
\]
Applying \eqref{minimalityconsequence} with
$x=F_n(z_j)$ and $r=s_n/2$, we obtain
\[
 \int_{B^g_{s_n/2}(F_n(z_j))}
 e^{pu_n}\,dV_g
 \le C_g
 \rho_*^p\left(\frac{s_n}{2}\right)^{2-2p}.
\]
Therefore
\[
 \int_{B_1(z)}e^{pU_n}\,dV_{\widehat g_n}
 =
 s_n^{2p-2}
 \int_{F_n(B_1(z))}e^{pu_n}\,dV_g\\
 \le N C_g
 2^{2p-2}\rho_*^p.
\]
Finally, the volume densities of $\widehat g_n$ converge uniformly to
$1$ on $B_R(0)$, and hence \eqref{localLpUind} follows.
The constants are independent of the center $z$.

Thus the argument of Step~1 applies and $(x_{k+1,n},r_{k+1,n}):=(y_n,s_n)$ produces a new
bubble satisfying \eqref{bubbleextractionprofile}-\eqref{convun8pi}. Up to a subsequence, we also have $x_{k+1,n}\to x^{(k+1)}\in M$.

We need to verify that \eqref{separ} holds as well for $1\le i\le k$, $j=k+1$.
First notice that, similar to Step 1, the above procedure implies that for every fixed $R>0$
\begin{equation}\label{boundpUnR}
\int_{B_R(0)}e^{pU_{i,n}} dy \le C(R).
\end{equation}
Now assume by contradiction that $d_g(y_n,x_{i,n})\leq C r_{i,n}$ for some constant $C>0$. Then
\begin{align*}
    s_n\leq \frac{d_{k,n}(y_n)}{L_n}\leq \frac{d_g(y_n,x_{i,n})}{L_n}\leq \frac{C r_{i,n}}{L_n} \quad \Longrightarrow \quad \frac{s_n}{r_{i,n}}\to 0.
\end{align*}
Hence $B_{s_n}^g(y_n)\subset B^g_{(C+1)r_{i,n}}(x_{i,n})$, which together with \eqref{newscale}, \eqref{boundpUnR} and the inductive assumption implies
\begin{align*}
    \rho_*=s_n^2\Big(\fint_{B^g_{s_n}(y_n)}e^{p u_n}\,dV_g\Big)^{\frac{1}{p}}&\leq C_g s_n^{2-\frac{2}{p}}\Big(\int_{B^g_{(C+1)r_{i,n}}(x_{i,n})}e^{p u_n}\,dV_g\Big)^{\frac{1}{p}}\\
    &\leq C_g \Big(\frac{s_n}{r_{i,n}}\Big)^{2-\frac{2}{p}}\Big(\int_{B_{C+1}(0)} e^{pU_{i,n}}\,dy\Big)^{\frac{1}{p}}\to0,
\end{align*}
which is a contradiction (here $U_{i,n}(y):=u_n(\mathrm{exp}_{x_{i,n}}(r_{i,n}y))+2\log r_{i,n}$ and the last integral is bounded by \eqref{localLpU} applied in the inductive step). This proves \eqref{separ}.

\medskip

\emph{Step 3.}
It remains to prove that the induction terminates after finitely many
steps. Suppose by contradiction that the extraction procedure does not
terminate. Then, for every $N\in\N$, up to a subsequence, we obtain $N$ bubbles
$(x_{i,n},r_{i,n})$ satisfying \eqref{separ} and \eqref{convun8pi}, so that the normalization \eqref{normalization} implies
\[
\begin{split}
\lambda_\infty=\lim_n\int_M \lambda_n he^{u_n} dV_g\ge \lim_{R\to+\infty} \lim_{n\to+\infty}\int_{\bigcup_{i=1}^n B^g_{Rr_{i,n}}(x_i)} \lambda_n h e^{u_n}dV_g= 8\pi N,
\end{split}\]
contradicting the boundedness of $\lambda_n$.

Since the failure of \eqref{nofurther} implies the existence of a new bubble, when the induction procedure stops, 
\eqref{nofurther} holds.
\end{proof}

The integral bounds in part (ii) of Proposition \ref{propextraction} allow us to derive a gradient bound for the sequence $(v_n)$. We will follow a method of
\cite{druet-robert-2006-PAMS}, but replacing the pointwise estimates on $u_n$ (which do not hold in our setting) with the $L^p$-estimate \eqref{nofurther}.

\medskip
 
From now on $\{(x_{i,n},r_{i,n})\}_{i=1}^\ell$ denotes the complete family obtained in Proposition~\ref{propextraction}, and $d_n(x)=\min_i d_g(x,x_{i,n})$.

\begin{prop}\label{propgradient}
There exists $C>0$ such that
\begin{equation}\label{gradestM}
        d_n(x)|\nabla_gv_n(x)|\le C
        \qquad\hbox{for all }x\in M.
\end{equation}
\end{prop}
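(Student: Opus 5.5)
We will use the Green representation \eqref{greenrep} for $v_n$. Since $-\Delta_g v_n=\lambda_n(he^{u_n}-1)$ has zero mean, differentiating under the integral and using \eqref{greenest} gives
\[
|\nabla_g v_n(x)|\le C\int_M \frac{\lambda_n h e^{u_n(y)}+\lambda_n}{d_g(x,y)}\,dV_g(y)\le C+C\int_M\frac{e^{u_n(y)}}{d_g(x,y)}\,dV_g(y).
\]
Here $\int_M d_g(x,y)^{-1}dV_g(y)\le C$ handles the constant term. Fix $x$ with $d:=d_n(x)>0$ (for $d$ bounded below the claim reduces to a uniform bound, and the splitting below still works with $d$ replaced by $\min\{d,\inj(M)/2\}$). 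We then split $M$ into three regions and show that each contributes at most $C/d$ to the last integral.

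\emph{Far region} $\{d_g(x,y)\ge d/(2C_*)\}$: here $d_g(x,y)^{-1}\le 2C_*/d$, so the normalization \eqref{normalization} together with $h\ge\min h>0$ gives a contribution of at most $C/d$.

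\emph{Near region} $B^g_{r}(x)$ with $r:=d/C_*$ (after enlarging $C_*$ this is admissible): we use H\"older with the exponent $p>2$ fixed in Proposition~\ref{propextraction}. Since $p'<2$, the bound $\|d_g(x,\cdot)^{-1}\|_{L^{p'}(B^g_r(x))}\le C r^{2/p'-1}$ holds. Applying \eqref{nofurther} at the point $x$ gives
\[
\int_{B^g_r(x)}\frac{e^{u_n}}{d_g(x,y)}\,dV_g\le C r^{2/p'-1}\Big(\int_{B^g_r(x)}e^{pu_n}\Big)^{1/p}\le C r^{2/p'-1}\, r^{2/p}\, r^{-2}\rho_*=C\rho_*\,r^{-1},
\]
using $2/p'+2/p=2$. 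This is $\le C/d$. This step is exactly where $p>2$ enters, as anticipated in the introduction.

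\emph{Intermediate annulus} $A^g_{d/C_*,\,d/(2C_*)}$: this is empty or trivially controlled. More cleanly, we take the near ball of radius $d/C_*$ and the far region $\{d_g(x,y)\ge d/C_*\}$ directly, so no gap remains. Combining the two estimates gives $|\nabla_g v_n(x)|\le C+C/d$, and multiplying by $d\le\diam M$ yields \eqref{gradestM}. At the centers $x=x_{i,n}$ the inequality is trivial.

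The only delicate point, which we expect to be the main one, is justifying the differentiation of the Green representation for a right-hand side that is only in $L^p_{\loc}$ away from the centers. The plan is to note that $e^{u_n}\in L^p(M)$ for each fixed $n$ by Moser--Trudinger, so $v_n\in W^{2,p}\subset C^1$ and the formula $\nabla v_n(x)=\int\nabla_xG(x,y)f_n(y)\,dV_g(y)$ holds, the kernel being in $L^{p'}$. The uniformity in $n$ then comes only from \eqref{nofurther} and \eqref{normalization}, as above.
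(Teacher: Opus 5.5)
Your proposal is correct and follows essentially the same route as the paper. Both arguments use the Green representation with $|\nabla_xG(x,y)|\le C/d_g(x,y)$, split at radius $d_n(x)/C_*$, control the far region by the normalization, and control the near ball by H\"older with $p>2$ combined with \eqref{nofurther}. Your extra remark justifying differentiation under the integral (via $e^{u_n}\in L^p(M)$ for each fixed $n$) is a harmless addition that the paper leaves implicit.
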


\begin{proof}
From the Green's representation formula, we have 
\[
        \nabla_gv_n(x)=\int_M\nabla_xG(x,y)\lambda_n(he^{u_n(y)}-1)\dV(y).
\]
By Lemma \ref{green}, the contribution of the constant term is uniformly bounded. Using \eqref{greenest}, we then find 
\[
        |\nabla_gv_n(x)|\le C+C\int_M\frac{\lambda_nhe^{u_n(y)}}{d_g(x,y)}\dV(y).
\]
Let $L=C_*$. Split the integral into $B^g_{d_n(x)/L}(x)$ and its complement. On the complement,
\[
 d_n(x)\int_{M\setminus B^g_{d_n(x)/L}(x)}\frac{\lambda_nhe^{u_n(y)}}{d_g(x,y)}\dV(y)
 \le CL\int_M\lambda_nhe^{u_n}\dV\le C.
\]
On the inner ball, \eqref{nofurther} gives
\[
        \|e^{u_n}\|_{L^p(B^g_{d_n(x)/L}(x))}
        \le C\left(\frac{d_n(x)}{L}\right)^{\frac{2}{p}-2}.
\]
Since $p>2$, H\"older's inequality gives
\[
\begin{split}
 d_n(x)\int_{B^g_{d_n(x)/L}(x)}\frac{\lambda_nhe^{u_n(y)}}{d_g(x,y)}\dV(y)
 &\le C d_n(x)
 \|d_g(x,\cdot)^{-1}\|_{L^{p'}(B^g_{d_n(x)/L}(x))}
 \|e^{u_n}\|_{L^p(B^g_{d_n(x)/L}(x))}  \\
 &\le C d_n(x)
 \left(\frac{d_n(x)}{L}\right)^{\frac{2}{p'}-1}
 \left(\frac{d_n(x)}{L}\right)^{\frac{2}{p}-2}
 \le C.
\end{split}
\]
Combining the two estimates proves \eqref{gradestM}.
\end{proof}

\section{Cluster trees}\label{sec:trees}

Since the centers $x_{i,n}\to x^{(i)}$ ($1\le i\le \ell$) extracted in Proposition~\ref{propextraction} need not have distinct limits,
we start calling $\{a_1,\dots, a_N\}$ the distinct limiting points of the sequence extracted in Proposition \ref{propextraction}, and for each $a_\alpha$ we set
$$I(\alpha):=\{i\in \{1,\dots,\ell\}: x^{(i)}=a_\alpha\}.$$

Then, we organize the bubbling sequences $\{(x_{i,n},r_{i,n}):i\in I(\alpha)\}$ converging to the same blow-up point $a_\alpha$ into a finite cluster tree, based on the the bubble scales $r_{i,n}$ and the distances $d_g(x_{i,n},x_{j,n})$, $i,j\in I(\alpha)$.

We start giving the basic definitions of an abstract rooted tree.

\begin{defin}
A \emph{finite rooted tree} is a finite partially ordered set
$(\mathcal T,\preceq)$ with a unique maximal element
$A_{\mathrm{root}}$, called \emph{root}, such that every node $A\in \mathcal{T}$
 with $A\neq A_{\mathrm{root}}$ has a unique \emph{predecessor}  
$\operatorname{pred}(A)$, i.e. an element such that
\[
 A\preceq \operatorname{pred}(A), \quad A\ne \operatorname{pred}(A),
\]
and there is no $B\in \mathcal{T}\setminus \{A, \operatorname{pred}(A)\}$ such that $A\preceq B\preceq \operatorname{pred}(A)$.

We say that a node $B$ is a \emph{branch} of $A$ if
$\operatorname{pred}(B)=A$.
The set of branches of $A$ is denoted by $\operatorname{br}(A)$.
A node having no branches is called a \emph{leaf}. A node which is not
a leaf, nor the root is called an \emph{internal node}.
\end{defin}

Fix $1\le \alpha\le N$, and $I(\alpha)\subseteq \{1,\dots,\ell\}$ as above. The corresponding tree $\mathcal{T}(\alpha)$ will be made of subsets of $I(\alpha)$. The order will be given by inclusion: $A\preceq B$ if and only if $A\subseteq B$. We set $A_{\mathrm{root}}=I(\alpha)$ and the leaves are the singletons $\{i\}$ for $i\in I(\alpha)$. For each node $A$, $\operatorname{br}(A)$ forms a partition of $A$.

For a node $A$, set
\[
 X_{A,n}:=\{x_{i,n}:i\in A\},
 \qquad
 \diam_g X_{A,n}
 :=\max_{i,j\in A}d_g(x_{i,n},x_{j,n}).
\]
Fix an arbitrary index $i_A\in A$ and define
\[
 x_{A,n}:=x_{i_A,n}.
\]
The \emph{inner scale} of $A$ is defined as
\[
 s_{A,n}:=
 \begin{cases}
 r_{i,n},& \text{if }A=\{i\},\\[1mm]
 \diam_g X_{A,n},& \text{if }|A|\ge2.
 \end{cases}
\]
The \emph{outer scale} is defined as follows. If $A$ is not the root, then
\[
 S_{A,n}
 :=
 \frac12
 \dist_g\bigl(X_{A,n},
              X_{\operatorname{pred}(A)\setminus A,n}\bigr),
\]
where
\[
 \dist_g(X_{A,n},X_{B,n})
 :=
 \min_{\substack{i\in A\\ j\in B}}
 d_g(x_{i,n},x_{j,n}).
\]
Thus $S_{A,n}$ is half the distance from $A$ to the other branches of
its predecessor.

If $A=A_{\mathrm{root}}=I(\alpha)$, we will choose a fixed  $S_\alpha>0$ sufficiently small and such that
\[
0<S_\alpha\le \frac{1}{8}\min_{\beta\ne\alpha} \dist_g(a_\alpha,a_\beta),\quad S_\alpha\le S_0, \quad S_0\le \frac{1}{2}\mathrm{inj}(M,g),\]
where $S_0$ is as in Lemma \ref{lemaverage} with $\nu_1\in (1,2)$ and $\nu\in (1,\nu_1)$ fixed (for instance $\nu_1=3/2$, $\nu=5/4$) throughout the paper, and set
\[
 S_{A,n}:=S_\alpha\quad \text{for every }n\in\mathbb{N}.
\]

\begin{figure}
    \centering
    \includegraphics[width=1\textwidth]{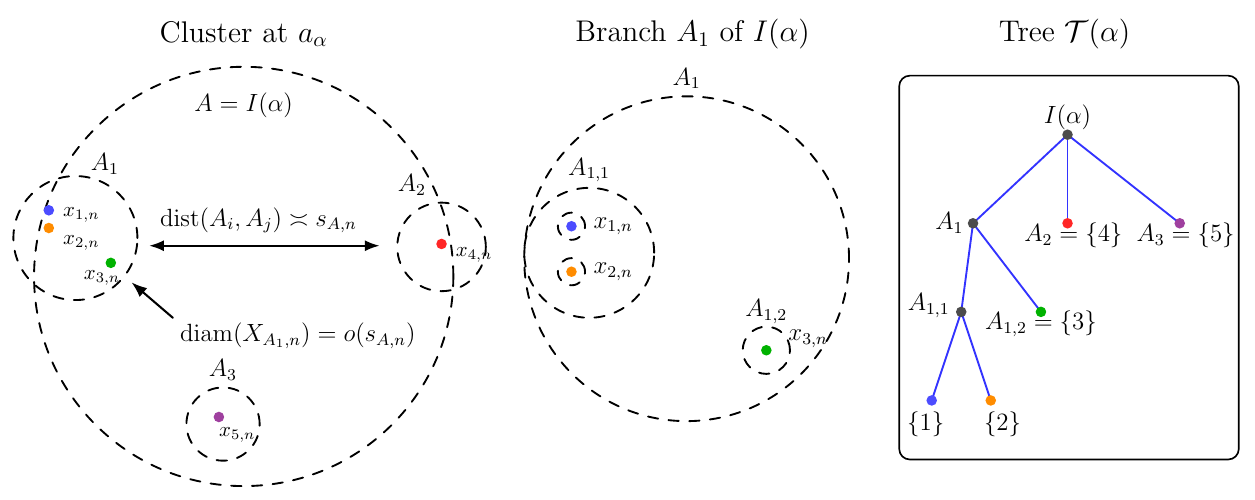}
    \caption{Example of clustering tree}
    \label{fig:cluster}
\end{figure}

In the following lemma we prove that for each blow-up point $a_\alpha$ it is possible to organize the sequences $x_{i,n}\to a_\alpha$ into a finite rooted tree, which describes the clustering structure.

\begin{lemma}\label{lemclustertree}
After passing to a subsequence, for every $a_\alpha\in \{a_1,\dots, a_N\}$, the set $I(\alpha)$
admits a finite rooted tree $\mathcal T(a_\alpha)$ of subsets of $I(\alpha)$ with
the following properties.

\begin{enumerate}
\item The root is $I(\alpha)$, the leaves are the singletons $\{i\}$ with $i\in I(\alpha)$, and the
branches of every non-leaf node form a partition of that node.

\item For every node $A$,
\[
 s_{A,n}=o(S_{A,n}).
\]

\item Given a non-leaf node $A$, if $\operatorname{br}(A)=\{A_1,\dots,A_m\}$,
then
\[
 s_{A_j,n}=o(s_{A,n}), \quad S_{A_j,n}\asymp s_{A,n},
 \quad\text{for } 1\le j\le m.
\]
and
\[
 \dist_g(X_{A_j,n},X_{A_k,n})
\asymp s_{A,n},\quad \text{for }1\le j<k\le m.
\]
\end{enumerate}
\end{lemma}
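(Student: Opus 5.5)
We build the tree top-down by a recursive splitting procedure based on comparison of scales, passing to subsequences at each step so that all ratios of the finitely many quantities involved have limits in $[0,+\infty]$. Since $I(\alpha)$ is finite, there are finitely many quantities $d_g(x_{i,n},x_{j,n})$ and $r_{i,n}$, $i,j\in I(\alpha)$. After a diagonal extraction we may therefore assume that every ratio among them converges in $[0,+\infty]$.

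\emph{Splitting rule.} Given a node $A$ with $|A|\ge2$, declare $i\sim_A j$ if $d_g(x_{i,n},x_{j,n})=o(s_{A,n})$. This is an equivalence relation: transitivity follows from the triangle inequality. The branches $\operatorname{br}(A)$ are the equivalence classes, which partition $A$. There are at least two classes: if $i,j$ realize $\diam_g X_{A,n}$, then $d_g(x_{i,n},x_{j,n})=s_{A,n}$, so $i\not\sim_A j$. Each branch is thus a proper subset, and the recursion, started at $A_{\mathrm{root}}=I(\alpha)$, terminates at singletons after at most $|I(\alpha)|$ levels. This gives item 1.

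\emph{Item 3.} For a branch $A_j$ with $|A_j|\ge2$, we have $s_{A_j,n}=o(s_{A,n})$ by construction. For a leaf branch $A_j=\{i\}$, we use \eqref{separ}: since $A$ contains some $k\neq i$ with $d_g(x_{i,n},x_{k,n})\le s_{A,n}$, and the balls $B^g_{Rr_{i,n}}(x_{i,n})$, $B^g_{Rr_{k,n}}(x_{k,n})$ are disjoint for every $R$, we get $r_{i,n}=o(d_g(x_{i,n},x_{k,n}))=o(s_{A,n})$. For $j\neq k$, distances between points of different classes are $\le s_{A,n}$ and, by the choice of subsequence, not $o(s_{A,n})$. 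Hence $\dist_g(X_{A_j,n},X_{A_k,n})\asymp s_{A,n}$, which gives $S_{A_j,n}\asymp s_{A,n}$ because $\operatorname{pred}(A_j)=A$ and the minimum is over finitely many pairs.

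\emph{Item 2.} For a non-root node, combine $s_{A_j,n}=o(s_{A,n})$ with $S_{A_j,n}\asymp s_{A,n}$. For the root, $s_{I(\alpha),n}\to0$ because all $x_{i,n}\to a_\alpha$, and $r_{i,n}\to0$ when $|I(\alpha)|=1$, while $S_{A,n}=S_\alpha>0$ is fixed.

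The only delicate point is the leaf case in item 3, where the bubble scale must be compared with inter-center distances. Here \eqref{separ} is essential: it shows $d_g(x_{i,n},x_{k,n})/r_{i,n}\to\infty$ for $i\ne k$, which is where a naive argument could otherwise fail.
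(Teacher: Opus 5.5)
Your proof is correct and follows essentially the same route as the paper: you split top-down via the relation $d_g(x_{i,n},x_{j,n})=o(s_{A,n})$ after extracting a subsequence on which all distance ratios converge, use \eqref{separ} for leaf branches, and use the fixed $S_\alpha$ for the root. You are in fact slightly more explicit than the paper on three points: why each split produces at least two classes, transitivity via the triangle inequality, and the singleton-root case where $s_{A,n}=r_{i,n}\to0$.
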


\begin{proof}
Since $I(\alpha)$ is finite, after passing to a subsequence we may assume
that, for every four indices for which the denominator is nonzero, the following limit exists
\[
 \lim_{n\to +\infty}\frac{d_g(x_{i,n},x_{j,n})}
      {d_g(x_{k,n},x_{l,n})}\in [0,+\infty].
\]
Start with the root $A=I(\alpha)$. If $|A|=1$, it is a leaf. If $|A|\ge2$,
set
\[
 s_{A,n}=\diam_g X_{A,n}.
\]
Define an equivalence relation on $A$ by
\[
 i\sim_A j
 \quad\Longleftrightarrow\quad
 \frac{d_g(x_{i,n},x_{j,n})}{s_{A,n}}\to0.
\]
The existence of all distance-ratio limits shows that $\sim_A$ is indeed an
equivalence relation. Its equivalence classes are defined to be the
branches of $A$.

Distinct equivalence classes have mutual distances comparable to
$s_{A,n}$, while each equivalence class has diameter of order $o(s_{A,n})$.
Apply the same construction recursively to every nonsingleton branch.
Since each decomposition strictly decreases the cardinality, the
process terminates after finitely many steps, and its leaves are the
singletons.

For an internal branch $B$ of $A$,
\[
 s_{B,n}=\diam_g X_{B,n}=o(s_{A,n}).
\]
If $B=\{i\}$ is a leaf, separation at the bubble scale (i.e. \eqref{separ} of Proposition \ref{propextraction}) gives
\[
 r_{i,n}=o(s_{A,n}),
\]
and hence again $s_{B,n}=o(s_{A,n})$.

By definition,
\[
 S_{B,n}
 =
 \frac12\dist_g
 \bigl(X_{B,n},X_{A\setminus B,n}\bigr),
\]
and the separation of distinct branches at scale $s_{A,n}$ implies $S_{B,n}\asymp s_{A,n}$.
Consequently,
\[
 \frac{s_{B,n}}{S_{B,n}}\to0.
\]
For the root, $s_{A,n}\to0$ because $x_{i,n}\to a_{\alpha}$ for every $i\in I(\alpha)$,
whereas $S_{A,n}=S_A>0$ is fixed. This proves all the assertions.
\end{proof}

\section{Decay and vanishing on necks} \label{sec:necks}

In this section we establish Proposition \ref{propneck}, i.e. the vanishing of the mass in the necks. At a first reading one can take, for a fixed $i\in \{1,\dots,\ell\}$,  
$z_n=x_{i,n}$, $s_n=r_{i,n}$ and $S_n=S_{\{i\},n}$ to be the scale of the neck.
Then conditions \eqref{N1}-\eqref{N3} follow from Proposition \ref{propextraction}-(i), so Proposition \ref{propneck} implies
$$ \lim_{R\to\infty}\lim_{n\to\infty}
        \int_{B^g_{S_{\{i\},n}}(x_{i,n})\setminus B^g_{Rr_{i,n}}(x_{i,n})}
        \lambda_nhe^{u_n}\dV=0,$$
i.e. the mass vanishes on the neck between the bubble $x_{i,n}$ and the closest bubbles.

More generally, Proposition \ref{propneck} will handle the neck between a cluster $A$ and the ``nearby'' clusters (the other branches of $\operatorname{pred}(A)$). In this case $z_n=x_{A,n}$, $s_n=s_{A,n}$ and $S_n=S_{A,n}$, see Figure \ref{fig:cluster}. Then conditions \eqref{N2}-\eqref{N3} will follow by induction.

\begin{prop}
[Neck vanishing]\label{propneck}
Let $z_n\in M$ and
\begin{equation}\label{snSn}
        0<s_n<S_n\leq S_0,
    \qquad
    \frac{s_n}{S_n}\longrightarrow0.
\end{equation} 
Assume that  there exists some $R_0\geq 2$ such that, for every $R\geq R_0$ and
large $n$
\begin{equation}\label{N1}\tag{$\mathcal{N}_1$}
  d_n(x)\geq \frac12\,d_g(x,z_n)\quad \text{for } Rs_n
    \leq d_g(x,z_n)
    \leq S_n.
\end{equation}
Assume also that
\begin{equation}\label{N2}\tag{$\mathcal{N}_2$}
        \lim_{R\to\infty}\lim_{n\to\infty}\int_{B^g_{8Rs_n}(z_n)\setminus B^g_{Rs_n}(z_n)}\lambda_n he^{u_n}dV_g=0;
\end{equation}
and that for $R\ge R_0$ and for some $\nu_1\in (1,2)$
\begin{equation}\label{N3}\tag{$\mathcal{N}_3$}
        \liminf_{n\to\infty}\int_{B^g_{Rs_n}(z_n)}\lambda_n he^{u_n}dV_g \geq4\pi\nu_1.
\end{equation}
Then, if $S_0>0$ is sufficiently small, depending on $\nu_1$, we have
\begin{equation}\label{eqnoneck}
    \lim_{R\to\infty}\lim_{n\to\infty}
    \int_{
       B^g_{S_n}(z_n)\setminus B^g_{Rs_n}(z_n)}\lambda_n he^{u_n}dV_g=0.
\end{equation}
\end{prop}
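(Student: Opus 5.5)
The plan is to combine a flux identity, obtained by testing \eqref{PSeq} against radial logarithmic cut-offs as announced in the introduction (Lemma~\ref{lemaverage}), with the gradient estimate \eqref{gradestM}. This should give geometric decay of the mass on dyadic annuli $A^g_{2^k Rs_n,2^{k+1}Rs_n}(z_n)$. Summing the decay over $k$ up to $2^kRs_n\sim S_n$ then yields \eqref{eqnoneck}.

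\textbf{Step 1: the flux identity.} For radii $Rs_n\le r<\rho\le S_n$, test \eqref{PSeq} with the function $\varphi$ that equals $1$ on $B^g_r(z_n)$, equals $\log(\rho/d_g(\cdot,z_n))/\log(\rho/r)$ on the annulus, and vanishes outside $B^g_\rho(z_n)$. Then $\|\nabla\varphi\|_{L^2}^2=2\pi/\log(\rho/r)+O(\text{small})$, uniformly in the two radii, so $\langle R_n,\varphi\rangle=o(1)$ uniformly. The resulting identity reads
\[
\frac{1}{\log(\rho/r)}\fint\!\!\fint_{A^g_{r,\rho}}\p_r u_n\cdots
=-\int\lambda_n h e^{u_n}\varphi+O(\rho^2)+o(1).
\]
In averaged form, it says that the mean of $-r\p_r u_n$ over the annulus is at least $\frac{1}{2\pi}\int_{B_r}\lambda_nhe^{u_n}-o(1)-O(S_0^2)$. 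By \eqref{N3}, this is at least $2\nu_1-o(1)-CS_0^2$. Choosing $S_0$ small gives $2\nu$ with $\nu\in(1,\nu_1)$, since the mass inside only increases with $r$.

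\textbf{Step 2: transfer to $v_n$.} Since $u_n-v_n\to0$ in $H^1$, the logarithmic averages of $\p_r(u_n-v_n)$ over thick annuli are $o(1)$ by Cauchy--Schwarz against $|\nabla\log d|\in L^2$ with weight $1/\log(\rho/r)$. Hence the spherical means $\bar v_n(t)$ of $v_n$ on circles satisfy, on average over dyadic annuli, $\bar v_n(t)\le \bar v_n(Rs_n)-2\nu\log(t/(Rs_n))+C$.

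Next, \eqref{N1} together with \eqref{gradestM} gives $|\nabla v_n|\le C/d_g(\cdot,z_n)$ on the neck. So on each dyadic annulus the oscillation of $v_n$ is bounded, and we get pointwise
\[
v_n(x)\le \bar v_n(Rs_n)-2\nu\log\frac{d_g(x,z_n)}{Rs_n}+C.
\]

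\textbf{Step 3: back to $e^{u_n}$ and summation.} To return from $v_n$ to $u_n$, localize on each dyadic annulus $A_k$ rescaled to a fixed annulus, with the constant $c_{k,n}$ equal to the local average of $u_n-v_n$. Lemma~\ref{exppqmean} together with Remark~\ref{rmkmetrics} uses the $L^p$ bound \eqref{nofurther} (valid there by \eqref{N1}) to compare $\int_{A_k}e^{u_n}$ with $\int_{A_k}e^{v_n+c_{k,n}}$. The delicate point is controlling the drift of the constants $c_{k,n}$ along the neck. I would avoid this by working directly with mass ratios between consecutive annuli, $m_{k+1}\le \theta m_k+o(1)$ with $\theta=2^{2-2\nu}<1$. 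Here the $v_n$-decay of Step 2 gives the ratio between adjacent annuli, and on the union of two adjacent annuli only one common shift is needed. Finally, \eqref{N2} starts the induction with $m_0\to0$ as $R\to\infty$, and the geometric series gives \eqref{eqnoneck}.

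The main obstacle is Step 3. The $o(1)$ errors must not accumulate over the roughly $\log(S_n/s_n)$ annuli. To handle this, I would state the flux inequality in integrated form over the whole neck $[2^kRs_n,S_n]$, which gives decay of the tail mass directly rather than annulus by annulus, so that a single $o(1)$ error appears.
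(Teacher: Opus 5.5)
Your Step 1 is right and matches the flux estimate of Lemma~\ref{lemaverage}. Note that it already gives decay of the circle averages $\bar u_n$ of $u_n$ itself, so the detour through $v_n$ in Step 2 is unnecessary.

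\textbf{The gap.} The genuine gap is Step 3, the passage from decay of averages to decay of mass, and neither of your two fixes closes it.

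The recursion $m_{k+1}\le\theta m_k+o(1)$ fails for two reasons. First, the additive errors sum to $K_n\cdot o(1)$, where $K_n\approx\log_2(S_n/(Rs_n))\to\infty$ at a rate unrelated to $\|\nabla(u_n-v_n)\|_{L^2}$. Second, the factor is not $2^{2-2\nu}$. Passing from an average to $\int_{A_k}e^{u_n}$ and back costs multiplicative constants: the oscillation of $v_n$ allowed by \eqref{gradestM}, and the Moser--Trudinger constants. So adjacent annuli only give $m_{k+1}\le C\,2^{2-2\nu}m_k$ with $C$ possibly large, i.e.\ no contraction. Geometric decay survives only if every annulus is compared with a single fixed anchor.

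Integrating the flux over $[2^kRs_n,S_n]$ does not help either. It controls circle averages, not tail mass, and the average-to-mass conversion is exactly what is missing.

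\textbf{The missing ingredient} is a shift-free reverse Jensen inequality for $u_n$ itself.
\begin{itemize}
\item For $4\tau_n\le r\le S_n/2$, rescale $A^g_{r,2r}(z_n)$ to $A_{1,2}(0)$. By \eqref{N1}, \eqref{gradestM}, $\|\nabla(u_n-v_n)\|_{L^2(M)}\to0$ and conformal invariance, the Dirichlet energy of the rescaled $u_n$ is bounded uniformly in $n$ and $r$. Then \eqref{MT2} gives $\int_{A^g_{r,2r}(z_n)}e^{u_n}\,dV_g\le C\,r^2\exp\big(\tfrac23\int_1^2\rho\,\bar u_n(r\rho)\,d\rho\big)$.
\item The anchor is a radius $\tau_n\in[Rs_n,2Rs_n]$ at which $\tau_n^2e^{\bar u_n(\tau_n)}\le C\delta_n(R)$, where $\delta_n(R)$ is the mass of $\lambda_nhe^{u_n}$ on $A^g_{Rs_n,8Rs_n}(z_n)$. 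It is found by forward Jensen on circles and averaging in $t$.
\item Running Step 1 from $\tau_n$ gives $\int_{A^g_{r,2r}(z_n)}\lambda_nhe^{u_n}\,dV_g\le C\delta_n(R)(\tau_n/r)^{2\nu-2}$. This sums over dyadic $r$ to $C\delta_n(R)$ with no accumulated error, and \eqref{N2} concludes.
\end{itemize}

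\textbf{Repairing your route.} Your $v_n$-route could also be made to work, provided each $A_k$ is compared directly with $A_0$:
\begin{itemize}
\item The proof of Lemma~\ref{exppqmean} actually gives a multiplicative error $o(1)\big(\int e^{q(\hat v_n+c)}\big)^{1/q}$ on the rescaled annulus.
\item The drift of the shifts is controlled by the Cauchy--Schwarz bound you already wrote in Step 2: $|c_{k,n}-c_{0,n}|\le C\|\nabla(u_n-v_n)\|_{L^2}\sqrt{k+1}$. This is dominated by the decay $2^{(2-2\nu)k}$.
\end{itemize}
As written, however, Step 3 does neither of these.
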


The proof of Proposition \ref{propneck} is based on the following decay estimates of the averages on $u_n$ on geodesic circles around $z_n$.
 
\begin{lemma}
\label{lemaverage}
Let $z_n\in M$, $s_n,S_n,S_0$ be as in \eqref{snSn}. Assume there exist $R_0\geq 4$ and $\nu_1\in(1,2)$ such that \eqref{N1}, \eqref{N3} hold for every fixed $R\geq R_0$ and sufficiently large $n$. Set 
\[
    \bar u_n(t)
    :=
    \frac{1}{2\pi}
    \int_0^{2\pi}
    u_n\bigl(\exp_{z_n}(t\theta)\bigr)\,d\theta.
\]
For $R\geq R_0$, define
\begin{equation}\label{eq:delta_n-R}
    \delta_n(R)
    :=
    \int_{B^g_{8Rs_n}(z_n)\setminus B^g_{Rs_n}(z_n)}\lambda_n he^{u_n}dV_g.
\end{equation}
Then there exists $C>0$ such that, for every fixed $R\geq R_0$
and all sufficiently large $n$, one can choose
$\tau_n\in[Rs_n,2Rs_n] $
such that 
\begin{equation}\label{goodtauM2}
    \tau_n^2e^{\bar u_n(\tau_n)}
    \leq C\delta_n(R).    
\end{equation}
Moreover, for every $\nu\in(1,\nu_1)$ there exists $S_0$ sufficiently small (depending on $\nu$ and $\nu_1$) such that, for
all sufficiently large $n$, choosing $\tau_n$ as above,
one has
\begin{equation}\label{uavgdecayM2}
    \bar u_n(r)
    \leq
    \bar u_n(\tau_n)
    -2\nu\log\frac{r}{\tau_n},\quad \text{for every }2\tau_n\leq r\leq S_n\le S_0.
\end{equation}
Consequently,
\begin{equation}\label{uavgdecayMexp2}
    e^{\bar u_n(r)}
    \leq
    C\delta_n(R)\,
    \tau_n^{2\nu-2}r^{-2\nu},\quad \text{for every }2\tau_n\leq r\leq S_n.
\end{equation}
\end{lemma}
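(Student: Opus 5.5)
For \eqref{goodtauM2} I would use Jensen's inequality on circles together with the coarea formula in geodesic polar coordinates around $z_n$. For $t\in[Rs_n,2Rs_n]$ (small, since $S_0\le\frac12\inj$), Jensen gives
\[
e^{\bar u_n(t)}\le \frac1{2\pi}\int_0^{2\pi}e^{u_n(\exp_{z_n}(t\theta))}\,d\theta .
\]
Multiplying by $t$, integrating over $[Rs_n,2Rs_n]$, and using that the polar volume density is comparable to $t$, we get
\[
\int_{Rs_n}^{2Rs_n} t\,e^{\bar u_n(t)}\,dt\le C\,\delta_n(R),
\]
since $h\ge \min h>0$ and $\lambda_n\to\lambda_\infty>0$. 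By the mean value theorem there is $\tau_n$ in the interval with $\tau_n\cdot Rs_n\cdot e^{\bar u_n(\tau_n)}\le C\delta_n(R)$. Since $\tau_n\le 2Rs_n$, this gives $\tau_n^2e^{\bar u_n(\tau_n)}\le 2C\delta_n(R)$.

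For the decay \eqref{uavgdecayM2}, I would test \eqref{PSeq} against the radial logarithmic cut-off
\[
\varphi(x)=\psi\bigl(d_g(x,z_n)\bigr),\qquad \psi(t)=\begin{cases}\log(r/\tau_n), & t\le \tau_n,\\ \log(r/t), & \tau_n\le t\le r,\\ 0, & t\ge r.\end{cases}
\]
Then $\int|\nabla\varphi|^2\le C\log(r/\tau_n)$, which is not uniformly bounded. Instead I would use the difference of two such cut-offs at inner radii $\tau_n$ and $\tau_n'$. The cleanest choice is to test with the Lipschitz function $\eta$ equal to $1$ on $B_{t_1}$, equal to $\log(t_2/d)/\log(t_2/t_1)$ on the annulus, and $0$ outside $B_{t_2}$. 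Its $H^1$ norm is $\lesssim(\log(t_2/t_1))^{-1/2}$, bounded uniformly, so $\langle R_n,\eta\rangle=o(1)$ uniformly in the radii. Integrating by parts, with volume-density corrections that are small once $S_0$ is small, gives
\[
\frac{\bar u_n(t_1)-\bar u_n(t_2)}{\log(t_2/t_1)}\approx \frac1{2\pi}\int_{B_{t_1}}\lambda_n h e^{u_n}+\text{(avg over annulus)}-O(t_2^2)+\frac{o(1)}{\log(t_2/t_1)},
\]
and the annulus term is nonnegative. Precisely: $-\int\nabla u_n\cdot\nabla\eta=\frac{1}{\log(t_2/t_1)}\fint\text{-type}\,\bigl(2\pi(\bar u_n(t_2)-\bar u_n(t_1))\bigr)$ up to metric errors. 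Using \eqref{N3}, $\int_{B_{t_1}}\lambda_nhe^{u_n}\ge 4\pi\nu_1$ for $t_1\ge \tau_n\ge Rs_n$. Taking $t_1=\tau_n$ and $t_2=r$, the $o(1)$ term is absorbed once $\log(r/\tau_n)\ge\log 2$, and the $\lambda_n t_2^2$ term together with the metric distortion is made less than $4\pi(\nu_1-\nu)$ by choosing $S_0$ small. This yields $\bar u_n(\tau_n)-\bar u_n(r)\ge 2\nu\log(r/\tau_n)$. Exponentiating and inserting \eqref{goodtauM2} gives \eqref{uavgdecayMexp2}.

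The main obstacle is making the error from $R_n$ uniformly $o(1)$ relative to $\log(r/\tau_n)$ when the annulus is only of bounded thickness. I would split $\eta$ as above, so that the error is $o(1)\cdot\|\eta\|_{H^1}$ with $\|\eta\|_{H^1}\to0$ as the ratio grows and bounded otherwise. In the bounded-ratio regime ($r\le 4\tau_n$, say) I would instead absorb the error into the slack $\nu_1-\nu$, multiplied by $\log 2$. Hypothesis \eqref{N1} is used only to guarantee that no bubble centers lie in the annulus, so that the mass inside $B_{t_1}$ is monotone and $\tau_n$ is well placed.
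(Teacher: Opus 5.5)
The proposal follows the paper's route and has a genuine gap. The shared route is Jensen on geodesic circles plus averaging over $[Rs_n,2Rs_n]$ to select $\tau_n$, then testing \eqref{PSeq} with the normalized logarithmic cut-off $\varphi_{s,r}$ (your $\eta$), whose $H^1$ norm is bounded uniformly in the radii, and absorbing errors into the slack $\nu_1-\nu$.

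The gap is the step you dismiss as ``volume-density corrections that are small once $S_0$ is small''. In geodesic polar coordinates $dV_g=J_n\,dt\,d\theta$ with $J_n=t(1+E_n)$ and $|E_n|\le Ct^2$, so
\[
\int_M\langle\nabla_gu_n,\nabla_g\varphi_{s,r}\rangle\,dV_g=\frac{2\pi}{\log(r/s)}\bigl(\bar u_n(s)-\bar u_n(r)\bigr)-\frac{1}{\log(r/s)}\int_s^r\int_0^{2\pi}\partial_tu_n\,E_n\,d\theta\,dt .
\]
The last term is only bounded by $\frac{C}{\log(r/s)}\int_{A^g_{s,r}(z_n)}|\nabla_gu_n|\,d_g(\cdot,z_n)\,dV_g$. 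Taking $r\le S_0$ small does not by itself make this $O(r^2)$ uniformly in $n$. The reason is that $u_n$ has no pointwise control and $\|\nabla_gu_n\|_{L^2(M)}$ is unbounded along a blowing-up sequence, so Cauchy--Schwarz against the Dirichlet energy gives nothing uniform. You need an $n$-uniform estimate on $\nabla u_n$ in the annulus, and your proposal never supplies one.

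This is exactly where the paper uses \eqref{N1}. Split $u_n=v_n+w_n$.
\begin{itemize}
\item The $w_n$-part is at most $Cr^2\|\nabla_gw_n\|_{L^2(M)}=o(r^2)$, by Cauchy--Schwarz and Lemma~\ref{lemclose}.
\item For the $v_n$-part, \eqref{N1} combined with Proposition~\ref{propgradient} gives $|\nabla_gv_n(x)|\le C/d_g(x,z_n)$ on $A^g_{s,r}(z_n)$. Its contribution is then at most $\frac{C}{\log(r/s)}\int_s^r t\,dt\le Cr^2$.
\end{itemize}
Alternatively, the Green representation with $|\nabla_xG(x,y)|\le C/d_g(x,y)$ gives $\int_{B^g_r(z_n)}|\nabla_gv_n|\,dV_g\le Cr$ from the $L^1$ bound on $\lambda_nhe^{u_n}$ alone, which also yields $O(r^2)$.

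The role you assign to \eqref{N1} is vacuous. The map $t\mapsto\int_{B^g_t(z_n)}\lambda_nhe^{u_n}\,dV_g$ is monotone simply because the integrand is nonnegative, and $\tau_n$ comes from averaging alone.

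A minor point on the error from $R_n$. Since $\varphi_{s,r}$ is already normalized, $\langle R_n,\varphi_{s,r}\rangle=o(1)$ uniformly and appears at the level of $4\pi\nu_1$, not divided by $\log(t_2/t_1)$ as in your display. The same holds for the $o(1)$ lost in \eqref{N3}, which only gives $4\pi\nu_1-o(1)$. Both are absorbed into $4\pi(\nu_1-\nu)$ for large $n$ regardless of $r/\tau_n$, so no bounded-ratio case distinction is needed.
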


\begin{proof}
In geodesic polar coordinates centered at $z_n$, write
\begin{equation}\label{defJn}
    g=dt^2+J_n(t,\theta)^2\,d\theta^2,
    \qquad
    J_n(t,\theta)=t\bigl(1+E_n(t,\theta)\bigr),\qquad E_n(t,\theta)= O(t^2)
\end{equation}
where the error is uniform in $n$.
Since $\lambda_n\to\lambda_\infty>0$ and $h>0$,
\[
\begin{aligned}
\int_{B^g_{2Rs_n}(z_n)\setminus B^g_{Rs_n}(z_n)}
e^{u_n}\,\dV
&\leq
C\delta_n(R).
\end{aligned}
\]
For $n$ sufficiently large, $2Rs_n$ is smaller than the injectivity
radius. Moreover, $J_n(t,\theta)\geq ct$ in the range under
consideration. Jensen's inequality therefore gives
\[
\begin{aligned}
\int_{\partial B^g_t(z_n)}e^{u_n}\,d\sigma_g
&=
\int_0^{2\pi}
e^{u_n(\exp_{z_n}(t\theta))}
J_n(t,\theta)\,d\theta
\\
&\geq
ct
\int_0^{2\pi}
e^{u_n(\exp_{z_n}(t\theta))}\,d\theta\geq
ct\,e^{\bar u_n(t)}.
\end{aligned}
\]
Integrating,
\[
   \int_{Rs_n}^{2Rs_n}
    t^2 e^{\bar u_n(t)}\,\frac{dt}{t}=  \int_{Rs_n}^{2Rs_n}
    t e^{\bar u_n(t)}\,dt
    \leq C\delta_n(R).
\]
Since
\[
    \int_{Rs_n}^{2Rs_n}\frac{dt}{t}=\log 2,
\]
there exists $\tau_n\in[Rs_n,2Rs_n]$ such that \eqref{goodtauM2} holds.

\medskip

We next prove \eqref{uavgdecayM2}. Fix $\nu\in(1,\nu_1)$ and choose
\[\nu_0 =\frac{\nu+\nu_1}{2}\in (\nu,\nu_1).\]
For $R\ge R_0$, choosing $\tau_n\geq Rs_n$ as above, by assumption \eqref{N3}, 
\[
  \int_{B^g_{t}(z_n)}\lambda_n h e^{u_n} dV_g\ge   \int_{B^g_{Rs_n}(z_n)}\lambda_n h e^{u_n} dV_g \geq4\pi\nu_0,\quad
    \text{for every }t\geq\tau_n,\,n\text{ large}.
\]
Let
\begin{equation}\label{condsr}
    \tau_n\leq s<r\leq S_n,
    \qquad
    \frac{r}{s}\geq2,
\end{equation}
and define
\[
    \varphi_{s,r}(x)
    :=
    \begin{cases}
    1,
    & d_g(x,z_n)\leq s,
    \\[2mm]
    \displaystyle
    \frac{\log r-\log d_g(x,z_n)}
         {\log r-\log s},
    & s<d_g(x,z_n)<r,
    \\[3mm]
    0,
    & d_g(x,z_n)\geq r.
    \end{cases}
\]
Then
\[
    0\leq\varphi_{s,r}\leq1,
    \qquad
    \operatorname{supp}\varphi_{s,r}
    \subset B^g_r(z_n),
\]
and
\[
    \int_M|\nabla_g\varphi_{s,r}|^2\,\dV
    \leq
    \frac{C}{\log(r/s)}
    \leq C.
\]
Consequently,
\[
    \|\varphi_{s,r}\|_{H^1(M)}\leq C,\quad \text{uniformly for }s,r\text{ as in \eqref{condsr}.}
\]
Testing \eqref{PSeq} with  $\varphi_{s,r}$ gives
\[
\begin{aligned}
\int_M
\langle\nabla_g u_n,\nabla_g\varphi_{s,r}\rangle\,\dV
&=
\int_M
\lambda_nhe^{u_n}\varphi_{s,r}\,\dV
-
\lambda_n\int_M\varphi_{s,r}\,\dV
+o(1),
\end{aligned}
\]
where $o(1)\to0$ uniformly in $s$ and $r$ as in \eqref{condsr}. Since
$\varphi_{s,r}=1$ on $B^g_s(z_n)$,
\[
    \int_M
    \lambda_nhe^{u_n}\varphi_{s,r}\,\dV
    \geq 4\pi\nu_0.
\]
Moreover,
\[
    \lambda_n\int_M\varphi_{s,r}\,\dV
    \leq Cr^2.
\]
It follows that
\begin{equation}\label{nablaunablaphi}
\int_M \langle\nabla_g u_n,\nabla_g\varphi_{s,r}\rangle\,\dV \geq
    4\pi\nu_0-Cr^2+o(1).
\end{equation}
On the annulus $A^g_{s,r}(z_n) :=
    B^g_{r}(z_n)\setminus B^g_s(z_n)$,
\[
    \partial_t\varphi_{s,r}
    =
    -\frac{1}{t\log(r/s)}.
\]
Therefore, using \eqref{defJn}, and setting $w_n:=u_n-v_n$
\[
\begin{aligned}
\int_M
\langle\nabla_g u_n,\nabla_g\varphi_{s,r}\rangle\,\dV
&=
-\frac{1}{\log(r/s)}
\int_s^r\int_0^{2\pi}
\partial_tu_n(t,\theta)
\frac{J_n(t,\theta)}{t}
\,d\theta\,dt.\\
&=-\frac{1}{\log(r/s)}
\int_s^r\int_0^{2\pi}
\partial_tu_n(t,\theta)
\,d\theta\,dt.\\
&\quad -\frac{1}{\log(r/s)}
\int_s^r\int_0^{2\pi}
\partial_t(v_n+w_n)(t,\theta)
E_n(t,\theta)
\,d\theta\,dt.
\end{aligned}
\]
The first integral on the right-hand side gives
\[
    \frac{2\pi}{\log(r/s)}
    \bigl(\bar u_n(s)-\bar u_n(r)\bigr).
\]
By \eqref{N1}, every $x\in A^g_{s,r}(z_n)$ satisfies $d_n(x)\geq 
    \frac12 \,d_g(x,z_n)$, hence
Proposition~\ref{propgradient} gives
\[
    |\nabla_gv_n(x)|
    \leq
    \frac{C}{d_g(x,z_n)}\quad \text{for }x\in A^g_{s,r}(z_n).
\]
It follows that
\[
\begin{aligned}
\frac{1}{\log(r/s)}
\left|
\int_s^r\int_0^{2\pi}
\partial_tv_n(t,\theta)E_n(t,\theta)
\,d\theta\,dt
\right|
&\leq
\frac{C}{\log(r/s)}
\int_s^r t\,dt
=O(r^2).
\end{aligned}
\]
For the term involving $w_n$, the Cauchy--Schwarz inequality and
$dV_g\simeq t\,dt\,d\theta$ give
\[
\begin{aligned}
&\frac{1}{\log(r/s)}
\left|
\int_s^r\int_0^{2\pi}
\partial_tw_n(t,\theta)E_n(t,\theta)
\,d\theta\,dt
\right|
\leq
Cr^2\|\nabla_gw_n\|_{L^2(M)}
=o(r^2),
\end{aligned}
\]
uniformly in the admissible radii. Together with \eqref{nablaunablaphi}, this gives
\[
\begin{aligned}
4\pi \nu_0 +o(1)+O(r^2)\le \int_M
\langle\nabla_g u_n,\nabla_g\varphi_{s,r}\rangle\,\dV
&=
\frac{2\pi}{\log(r/s)}
\bigl(\bar u_n(s)-\bar u_n(r)\bigr)
+O(r^2),
\end{aligned}
\]
i.e.
\[
    \frac{1}{\log(r/s)}
    \bigl(\bar u_n(s)-\bar u_n(r)\bigr)
    \geq 2\nu_0 +o(1)+O(r^2),
\]
with $o(1)\to 0$ and $O(r^2)\le Cr^2$ uniformly for $r,s$ as in \eqref{condsr}.
Since $r\leq S_n\leq S_0$,
we may choose $S_0$ sufficiently small depending on $\nu$ and $\nu_1$\footnote{this is, of course, not necessary if $S_n\to 0$.}  such that, for every
$R\geq R_0$ and all large $n$, the preceding estimates
imply
\[
    \frac{2\pi}{\log(r/s)}
    \bigl(\bar u_n(s)-\bar u_n(r)\bigr)
    \geq4\pi\nu.
\]
Hence
\[
    \bar u_n(r)
    \leq
    \bar u_n(s)
    -
    2\nu\log\frac{r}{s}.
\]
Taking $s=\tau_n$, we obtain \eqref{uavgdecayM2}.

 Finally, \eqref{uavgdecayMexp2} follows immediately from \eqref{goodtauM2} and \eqref{uavgdecayM2}.
\end{proof}

\noindent\emph{Proof of Proposition \ref{propneck}.}
Fix \[\nu=\frac{1+\nu_1}{2}\in(1,\nu_1),\]
and pick $S_0>0$ as given by Lemma \ref{lemaverage}. For every sufficiently large $R$ and all sufficiently large $n$, depending on $R$, Lemma~\ref{lemaverage} provides
$\tau_n\in[Rs_n,2Rs_n]$ such that \eqref{goodtauM2}, \eqref{uavgdecayMexp2} hold.

We claim that
\begin{equation}\label{intAr2r}
    \int_{A^g_{r,2r}(z_n)}
    \lambda_n h e^{u_n}\,dV_g
    \leq
    C\delta_n(R)
    \left(\frac{\tau_n}{r}\right)^{2\nu-2},
    \qquad\text{for }
    4\tau_n\leq r\leq\frac{S_n}{2},
\end{equation}
where
$
A^g_{r,2r}(z_n)
    :=
    B^g_{2r}(z_n)\setminus B^g_r(z_n).$
Define
the rescaled functions
\[
    \widehat u_{n,r}(y)
    :=
    u_n\bigl(\exp_{z_n}(ry)\bigr)+2\log r,
    \qquad y\in D:=A_{1,2}(0)=B_2(0)\setminus B_1(0).
\]
Let also
\[
    \widehat g_{n,r}
    :=
    r^{-2}
    \bigl(\exp_{z_n}\circ(r\,\cdot)\bigr)^*g.
\]
For $y\in D$ one has
\[
    r
    \leq
    d_g\bigl(\exp_{z_n}(ry),z_n\bigr)
    \leq
    2r.
\]
By our choice of $r$ in \eqref{intAr2r}, assumption \eqref{N1} gives
\[
d_n(x) \geq \frac12 d_g(x,z_n) \geq \frac r2,
    \qquad \text{for }x\in A^g_{r,2r}(z_n).
\]
Proposition~\ref{propgradient} therefore yields
\[
    |\nabla_gv_n|
    \leq\frac{C}{r}
    \qquad
    \text{on }A^g_{r,2r}(z_n).
\]
Set $w_n:=u_n-v_n\to0$ in $H^1(M)$. By the conformal invariance of the Dirichlet integral,
\begin{equation}\label{stimahatun}
    \int_D
    |\nabla_{\widehat g_{n,r}}\widehat u_{n,r}|^2
    \,dV_{\widehat g_{n,r}}
    \leq
    C\int_{A^g_{r,2r}(z_n)}
    |\nabla_gv_n|^2\,dV_g
    +
    C\int_M|\nabla_gw_n|^2\,dV_g \leq C.
\end{equation}
The rescaled metrics $\widehat g_{n,r}$ are uniformly equivalent to the Euclidean metric on $D$. Indeed, $2r\leq S_n\leq S_0$,
where $S_0$ is chosen smaller than the injectivity radius. Thus the family
$\widehat g_{n,r}$ is uniformly elliptic and uniformly bounded in $C^1(D)$.

Define
\[
m_{n,r}:=\fint_D \widehat u_{n,r}\,dy.
\]
By the uniform equivalence of $\widehat g_{n,r}$ with the Euclidean
metric and \eqref{stimahatun},
\[
\int_D |\nabla \widehat u_{n,r}|^2\,dy \le C.
\]
Hence, by the Poincar\'e inequality on the fixed annulus $D$, $\|\widehat u_{n,r}-m_{n,r}\|_{H^1(D)}\le C$.
By the Moser--Trudinger inequality \eqref{MT2} and the equivalence of the metrics, 
\begin{equation}\label{eq:reverse-jensen}
    \int_D
\exp\bigl(|\widehat u_{n,r}-m_{n,r}|\bigr)
\,dV_{\widehat g_{n,r}}
\le C\int_D
\exp\bigl(|\widehat u_{n,r}-m_{n,r}|\bigr)\,dy
\le C.
\end{equation}
We next estimate $m_{n,r}$. Since $D=A_{1,2}(0)$ and
$|D|=3\pi$, using polar coordinates $y=\rho \, \theta$ we have the exact
identity
\[
\begin{aligned}
m_{n,r}
&=
2\log r
+\frac{1}{3\pi}
\int_1^2\int_0^{2\pi}
u_n\bigl(\exp_{z_n}(r\rho\theta)\bigr)
\,\rho\,d\theta\,d\rho
\\
&=
2\log r+\frac{2}{3}\int_1^2
\rho\,\bar u_n(r\rho)\,d\rho.
\end{aligned}
\]
For $\rho\in[1,2]$, our choice
$4\tau_n\le r\le S_n/2$ gives $2\tau_n\le r\rho\le S_n$.
Hence \eqref{uavgdecayMexp2} yields
\[
\bar u_n(r\rho)
\le
\log\bigl(C\delta_n(R)\bigr)
+(2\nu-2)\log\tau_n
-2\nu\log(r\rho)
\]
and therefore, since $\rho\ge1$ and
$\frac23\int_1^2\rho\,d\rho=1$,
\[
\begin{aligned}
m_{n,r}
&\le
2\log r
+\log\bigl(C\delta_n(R)\bigr)
+(2\nu-2)\log\tau_n
-2\nu\log r
\\
&=
\log\bigl(C\delta_n(R)\bigr)
+(2\nu-2)\log\frac{\tau_n}{r}.
\end{aligned}
\]
Consequently,
\[
e^{m_{n,r}}
\le
C\delta_n(R)
\left(\frac{\tau_n}{r}\right)^{2\nu-2}.
\]
By the change of variables $x=\exp_{z_n}(ry)$, using \eqref{eq:reverse-jensen} and the above estimate we get
\[
\begin{aligned}
    \int_{A^g_{r,2r}(z_n)}
    \lambda_nhe^{u_n}\,dV_g
    &\leq
    C\int_{A^g_{r,2r}(z_n)}
    e^{u_n}\,dV_g
    =
    C\int_D
    e^{\widehat u_{n,r}}\,dV_{\widehat g_{n,r}}
    \\
    &\leq
    Ce^{m_{n,r}}
    \leq
    C\delta_n(R)
    \left(\frac{\tau_n}{r}\right)^{2\nu-2}.
\end{aligned}
\]
This proves \eqref{intAr2r}.

We now sum over dyadic annuli. Since $s_n/S_n\to0$ and
$\tau_n\leq2Rs_n$, for every fixed $R$ and all sufficiently large $n$, $4\tau_n
    \leq
    8Rs_n
    \leq
    S_n/2$.
Set
\[
    r_{k,n}:=2^k4\tau_n,
    \qquad k\geq0,
\]
and let $K_n$ be the largest integer such that
$r_{K_n,n}\leq {S_n}/{2}$.
By maximality, $2r_{K_n,n}>{S_n}/{2}$. Hence 
\[
    B^g_{S_n}(z_n)\setminus B^g_{4\tau_n}(z_n)
    \subset
    \bigcup_{k=0}^{K_n}
    A^g_{r_{k,n},2r_{k,n}}(z_n)
    \,\cup\,
    A^g_{S_n/2,S_n}(z_n).
\]
Applying \eqref{intAr2r} to the dyadic annuli and to the final annulus with $r=S_n/2$, we obtain
\[
\begin{aligned}
    &\int_{B^g_{S_n}(z_n)\setminus B^g_{4\tau_n}(z_n)}
    \lambda_nhe^{u_n}\,dV_g
    \\
    &\qquad\leq
    C\delta_n(R)
    \sum_{k=0}^{K_n}
    \left(
        \frac{\tau_n}{2^k4\tau_n}
    \right)^{2\nu-2}
    +
    C\delta_n(R)
    \left(
        \frac{2\tau_n}{S_n}
    \right)^{2\nu-2}
    \\
    &\qquad\leq
    C_\nu\delta_n(R),
\end{aligned}
\]
because $\nu>1$.

It remains to estimate the inner part. Since
$\tau_n\in[Rs_n,2Rs_n]$, one has
\[
    B^g_{4\tau_n}(z_n)\setminus B^g_{Rs_n}(z_n)
    \subset
    B^g_{8Rs_n}(z_n)\setminus B^g_{Rs_n}(z_n),
\]
and hence, by the definition \eqref{eq:delta_n-R} of $\delta_n(R)$,
\[
    \int_{B^g_{4\tau_n}(z_n)\setminus B^g_{Rs_n}(z_n)}
    \lambda_nhe^{u_n}\,dV_g
    \leq
    \delta_n(R).
\]
Combining the inner and outer estimates gives
\[
    \int_{B^g_{S_n}(z_n)\setminus B^g_{Rs_n}(z_n)}
    \lambda_nhe^{u_n}\,dV_g
    \leq
    C_\nu\delta_n(R),
\]
for every sufficiently large $R$ and all sufficiently large $n$ depending on $R$. Thus
\[
    \limsup_{n\to\infty}
    \int_{B^g_{S_n}(z_n)\setminus B^g_{Rs_n}(z_n)}
    \lambda_nhe^{u_n}\,dV_g
    \leq
    C_\nu
    \limsup_{n\to\infty}\delta_n(R).
\]
Then \eqref{eqnoneck} follows directly from \eqref{N2}.
\hfill $\square$

\section{Quantization} \label{sec:cluster}

In this section we analyze the clustering behavior of bubbles and use the results of Sections \ref{sec:extract} and \ref{sec:necks} to prove Theorem \ref{trmmain}. The notation of Section \ref{sec:trees} is employed throughout, and we will write $d_g$ in place of $\dist_g$, also for the distance between subsets of $M$.

In Proposition \ref{propclusterclearing} below we show that no mass concentrates in the intermediate regions between one cluster $A$ and its branches, ``at scales $s_{A,n}$''. Its proof uses ideas from the seminal work of Brezis-Merle, see \cite[Theorem 3]{brezis-merle-1991}: after a suitable scaling,  either the sequence is compact or it converges to $-\infty$ locally uniformly away from the blow-up points. The compact alternative is ruled out using the finiteness of the mass and the presence of a concentration of at least $4\pi$ to obtain a contradiction. Similar ideas, but without the rescaling, are also used in Proposition \ref{nodiffuse}.
\begin{prop}
\label{propclusterclearing}
Let $A$ be a non-leaf node with branches
$A_1,\dots,A_m$. 
Then, for every (small) $\delta>0$ one has
\[
 \int_{B^g_{s_{A,n}/\delta}(x_{A,n})\setminus \bigcup_{j=1}^m B^g_{\delta s_{A,n}}(x_{A_j,n})} \lambda_nhe^{u_n}\,dV_g\to0.
\]
\end{prop}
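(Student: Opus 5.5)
Rescale at the cluster scale and run a Brezis--Merle dichotomy. Set $\sigma_n:=s_{A,n}$ and $F_n(y):=\exp_{x_{A,n}}(\sigma_n y)$, and define
\[
 U_n:=u_n\circ F_n+2\log\sigma_n,\qquad V_n:=v_n\circ F_n+2\log\sigma_n .
\]
By Lemma~\ref{lemclustertree}, the rescaled branch centers $y_{j,n}:=\sigma_n^{-1}\exp^{-1}_{x_{A,n}}(x_{A_j,n})$ stay bounded. After a subsequence they converge to \emph{distinct} points $y_1,\dots,y_m$ with $|y_j-y_k|\asymp 1$. All bubble centers of $A$ collapse onto these points, and every other center lies at rescaled distance $\gtrsim S_{A,n}/\sigma_n\to\infty$. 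Write $K_\delta:=\overline{B_{1/\delta}}\setminus\bigcup_j B_{\delta}(y_j)$. For $n$ large, every $x=F_n(y)$ with $y\in K_{\delta/2}$ satisfies $d_n(x)\ge c_\delta\sigma_n$.

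Two facts then follow. First, Proposition~\ref{propgradient} gives $|\nabla V_n|\le C_\delta$ on $K_{\delta/2}$. Second, \eqref{nofurther} applied to balls of radius $c_\delta\sigma_n/C_*$ gives $\|e^{U_n}\|_{L^p(K_{\delta/2})}\le C_\delta$. The claim reduces to showing $\int_{K_\delta}e^{U_n}\,dy\to0$.

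Normalize the constants as in Step~1 of Proposition~\ref{propextraction}: set $\widetilde V_n:=V_n+c_n$ with $c_n:=\fint_{D}(U_n-V_n)$ on a fixed connected smooth domain $D$ (e.g.\ a slight enlargement of $K_\delta$ inside $K_{\delta/2}$). Then $U_n-\widetilde V_n\to0$ in $H^1(D)$. The gradient bound makes $\widetilde V_n-\widetilde V_n(y_0)$ uniformly bounded on $D$, which leaves two cases.

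\emph{Case (a): $\widetilde V_n(y_0)\to-\infty$.} Then $e^{q\widetilde V_n}\to0$ uniformly on $D$ for $q>1$. Lemma~\ref{exppqmean} with Remark~\ref{rmkmetrics} gives $\int_{K_\delta}e^{U_n}\to0$, which is the claim.

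\emph{Case (b): $\widetilde V_n(y_0)$ is bounded along a subsequence.} We must reach a contradiction. Here $\widetilde V_n$ is bounded in $C^{0,1}(D)$, and by the $L^p$ bound with $p>2$ the equation
\[
-\Delta_{\hat g_n}\widetilde V_n=\lambda_n h(F_n)e^{U_n}-\lambda_n\sigma_n^2
\]
gives $C^{1,\alpha}$ bounds. Letting $\delta\to0$ diagonally, $\widetilde V_n\to V$ in $C^{1,\alpha}_{\loc}(\R^2\setminus\{y_1,\dots,y_m\})$, with
\[
-\Delta V=\mu e^V+\sum_j\beta_j\delta_{y_j},\qquad \mu=\lambda_\infty h(a_\alpha),\qquad \int_{\R^2} e^V<\infty .
\]
The masses satisfy $\beta_j\ge 8\pi|A_j|\ge 8\pi$, because each branch contains at least one full bubble of mass $8\pi$ by \eqref{convun8pi} and the bubbles are separated by \eqref{separ}. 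Near $y_j$ this forces $V\le -\frac{\beta_j}{2\pi}\log|y-y_j|+C$ from below in the sense that $e^V\gtrsim|y-y_j|^{-\beta_j/2\pi}$ with $\beta_j/2\pi\ge4>2$. Hence $e^V\notin L^1$ near $y_j$, contradicting the finite-mass bound. That bound is obtained exactly as in Step~1 via Fatou, since $\int e^{U_n}\le C$ and $e^{U_n}-e^{\widetilde V_n}\to0$ in $L^1_{\loc}$ away from the $y_j$.

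The lower bound on $V$ near $y_j$ needs justifying. Write $V=\frac{\beta_j}{2\pi}\log\frac1{|y-y_j|}+$ (a superharmonic part that is bounded below near $y_j$). The term $\mu e^V\ge0$ contributes a nonnegative Newtonian potential, which is bounded below locally. So $e^V\ge c|y-y_j|^{-\beta_j/2\pi}$, which is not integrable.

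The main obstacle is making the point masses $\beta_j$ rigorous. Quantization of mass inside $B_{\delta\sigma_n}(x_{A_j,n})$ is not yet known; only a lower bound is. The approach is to pass to the limit in the distributional equation using the weak convergence of $\lambda_n h e^{U_n}\,dy$, and to take $\beta_j:=\lim_{\delta\to0}\lim_n\int_{B_\delta(y_j)}\lambda_n h(F_n)e^{U_n}$. This quantity is at least $8\pi$ because the bubble balls $B^g_{Rr_{i,n}}(x_{i,n})$, $i\in A_j$, are eventually contained in $F_n(B_\delta(y_j))$. Only this lower bound is used, so the argument does not depend on any inductive hypothesis.
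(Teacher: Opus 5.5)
Your proof is correct. Its skeleton matches the paper's: rescale at $s_{A,n}$, run a Brezis--Merle dichotomy on $\R^2\setminus Y$, exclude the compact alternative using an atom of mass at least $8\pi$ at each $y_j$ (taken only from Proposition~\ref{propextraction}), and conclude with Lemma~\ref{exppqmean}. The genuine difference is how you obtain the dichotomy.

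\textbf{The dichotomy.} The paper splits $\widetilde V_n=W_n+Z_n$, with $W_n$ the Dirichlet solution (bounded in $C^{1,\alpha}$ by the $L^p$ bound) and $Z_n$ harmonic. It bounds $\int Z_n^+$, applies the mean-value inequality for subharmonic functions and then Harnack, getting ``bounded or $\to-\infty$'' on each ball. It then propagates this by connectedness. You instead invoke Proposition~\ref{propgradient}, which at this stage already gives $|\nabla V_n|\le C_\delta$ on $K_{\delta/2}$. The dichotomy then follows at once from the behaviour of the single value $\widetilde V_n(y_0)$ on a connected set. This is shorter and not circular, since Proposition~\ref{propgradient} uses only the completed extraction. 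It also removes the need to change normalization as $\delta\to0$, because the Lipschitz bound holds for $V_n$ regardless of the additive constant. The paper's Harnack route is the one that also works in Step~1 of Proposition~\ref{propextraction}, where no gradient bound is available yet.

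\textbf{Excluding the compact case.} The paper never writes an equation for the limit $V$ across $y_j$. It compares $\widetilde V_n\ge\zeta_n$ at finite $n$, with $\zeta_n$ the Dirichlet solution, and passes to the limit only in $\zeta_n$. You pass to the limit in
\[
-\Delta V=\mu e^V+\sum_j\beta_j\delta_{y_j}
\]
and use that $V-\frac{\beta_j}{2\pi}\log\frac1{|y-y_j|}$ is superharmonic near $y_j$, hence locally bounded below. This is fine, but two points need fixing:
\begin{itemize}
\item You should state why $\widetilde V_n\to V$ in $L^1_{\loc}$ across $y_j$, which is needed to obtain the distributional equation there. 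In Case (b) the values on $\partial B_r(y_j)$ are bounded, and the part with zero boundary data has $L^1$-bounded right-hand side, so it is bounded in $W^{1,s}$ for $s<2$. This is the same Stampacchia bound the paper uses for $\zeta_n$.
\item The inequality should read $V\ge\frac{\beta_j}{2\pi}\log\frac1{|y-y_j|}-C$, not ``$V\le\dots$ from below''.
\end{itemize}
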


\begin{proof} We proceed by steps.

\medskip
\noindent
\emph{Step 1.}
Set
\begin{equation}\label{defFn}
     F_n(y):=\exp_{x_{A,n}}(s_{A,n}y),
 \qquad
 \widehat g_n:=s_{A,n}^{-2}F_n^*g.
\end{equation}
For each branch $A_j$ of $A$ with representative point
$x_{A_j,n}\in X_{A_j,n}$, set
\[
y_{j,n}
:=
s_{A,n}^{-1}\exp_{x_{A,n}}^{-1}(x_{A_j,n}),
\qquad j=1,\dots,m.
\]
Since $x_{A,n},x_{A_j,n}\in X_{A,n}$ and
\[
s_{A,n}=\diam_g X_{A,n},
\]
we have
\[
|y_{j,n}|
=
\frac{d_g(x_{A,n},x_{A_j,n})}{s_{A,n}}
\le 1.
\]
After passing to a subsequence,
there exist points $y_1,\dots,y_m\in\overline{B}_1(0)\subset\R^2$ such that
\begin{equation}\label{eq:branch-limits}
y_{j,n}\to y_j,
\qquad j=1,\dots,m.
\end{equation}

We claim that these points are pairwise distinct. Indeed, for $j\neq k$,
the separation property of distinct branches (see Lemma \ref{lemclustertree}) gives $d_g(X_{A_j,n},X_{A_k,n})\asymp  s_{A,n}$.
In particular, there exists $c_A>0$ such that
\[
\liminf_{n\to +\infty}\frac{d_g(x_{A_j,n},x_{A_k,n})}{s_{A,n}}\ge c_A,\quad \text{for }1\le j< k\le m.
\]
On the other hand, with $F_n$ and $\widehat g_n$ as in \eqref{defFn},
we have $\widehat g_n\to g_{\R^2}$ in  $C^2_{\loc}(\R^2)$.
As $x_{A_j,n}=F_n(y_{j,n})$ and
$x_{A_k,n}=F_n(y_{k,n})$,  local convergence of the rescaled
metrics implies
\[
|y_j-y_k|
=
\lim_{n\to\infty}
\frac{d_g(x_{A_j,n},x_{A_k,n})}{s_{A,n}}
\ge c_A>0, \quad \text{for }1\le j< k\le m.
\]
Thus $y_1,\dots,y_m$ are pairwise distinct.

Moreover, if $i\in A_j$ (i.e. $x_{i,n}$ belongs to the cluster $A_j$), then
\[
\frac{d_g(x_{i,n},x_{A_j,n})}{s_{A,n}}
\le
\frac{\diam_g X_{A_j,n}}{s_{A,n}}
\to0.
\]
This proves 
\begin{equation}\label{eq:centres-branch-limits}
 s_{A,n}^{-1}
 \exp_{x_{A,n}}^{-1}(x_{i,n})
 \to y_j,\quad \text{for every }i\in A_j.
\end{equation}
By contrast, the centers not belonging to $A$ (if any) escape to infinity at
the scale $s_{A,n}$, since $s_{A,n}=o(S_{A,n})$ (see Lemma \ref{lemclustertree}),
so that
\begin{equation}\label{eq:centres-infinity}
 \frac{d_g(x_{i,n},x_{A;n})}{s_{A,n}}\to+\infty,\quad \text{for every }i\in \{1,\dots,\ell\}\setminus A.
\end{equation}
Set now
\[
Y:=\{y_1,\dots,y_m\}.
\]

\medskip
\noindent
\emph{Step 2.}
Fix an open set
\[
\Omega\Subset\mathbb R^2\setminus Y.
\]
By the definition of $Y$ and since the centers $x_{i,n}$ for $i\not\in A$ are at a distance at least $S_{A,n}$ from the centers $x_{j,n}$ with $j\in A$, there exists $c_\Omega>0$ such that
\begin{equation}\label{eq:distance-centres}
d_n(F_n(z))\ge c_\Omega s_{A,n}
\end{equation}
for every $z\in\Omega$ and sufficiently large $n$, where $d_n$ is as in \eqref{defdn}.

Moreover, by local uniform convergence of $\widehat g_n\to g_{\mathbb R^2}$, 
\begin{align}\label{eq:ball-inclusion}
F_n\bigl(B_\rho(z)\bigr)
\subset
B^g_{2\rho s_{A,n}}\bigl(F_n(z)\bigr)
\end{align}
whenever $z\in\Omega$, $\rho\le 1$ and $n\geq n_\Omega$ is sufficiently large. Choose $1>\rho_\Omega>0$ so that
\[
2\rho_\Omega\le \frac{c_\Omega}{C_*},
\]
where $C_*$ is as in Proposition \ref{propextraction}.
Then, by \eqref{eq:distance-centres},
\[
2\rho_\Omega s_{A,n}
\le \frac{d_n(F_n(z))}{C_*},
\]
so that \eqref{nofurther} holds with $x=F_n(z)$ and $r=2\rho_\Omega s_{A,n}$.
Now set
\[
U_n(y):=u_n(F_n(y))+2\log s_{A,n},
\qquad
V_n(y):=v_n(F_n(y))+2\log s_{A,n},
\]
so that
\begin{equation}\label{eq:cluster-rescaled}
-\Delta_{\widehat g_n}V_n
=
f_n(y)e^{U_n(y)}-\lambda_ns_{A,n}^2,
\qquad
f_n(y):=\lambda_n h(F_n(y)).
\end{equation}
For a fixed $p>2$ and after scaling, \eqref{nofurther}, \eqref{eq:ball-inclusion} give
\[
\left(
\int_{B_{\rho_\Omega}(z)}
e^{pU_n}\,dV_{\widehat g_n}
\right)^{1/p}
\le C_g \rho_*(2\rho_\Omega)^{\frac{2}{p}-2} =: C_\Omega.
\]
The constant is uniform for $z\in\Omega$. Up to enlarging $C_\Omega$, a finite covering 
gives
\begin{equation}\label{eq:cluster-local-Lp}
\|e^{U_n}\|_{L^p(\Omega,dV_{\widehat g_n})}
\le C_\Omega.
\end{equation}
In particular, since $s_{A,n}=o(1)$,
\begin{equation}\label{eq:cluster-source-Lp}
\|f_ne^{U_n}-\lambda_ns_{A,n}^2\|_
 {L^p(\Omega,dV_{\widehat g_n})}
\le C_\Omega.
\end{equation}

\medskip
\noindent
\emph{Step 3.}
Choose a nonempty reference ball $B_0\Subset\R^2\setminus Y$
and constants $c_n$ such that
\[
 \fint_{B_0}\bigl(U_n-(V_n+c_n)\bigr)\,dy=0.
\]
Set
\[
 \widetilde V_n:=V_n+c_n.
\]
Since $\|\nabla_g(u_n-v_n)\|_{L^2(M)}\to0$,
conformal invariance of the Dirichlet integral and Poincaré's
inequality on connected bounded subsets of $\R^2\setminus Y$ give
\begin{equation}\label{eq:H1-cluster-close}
 U_n-\widetilde V_n\to0
 \quad\text{in }H^1_{\mathrm{loc}}(\R^2\setminus Y),
\end{equation}
(this holds more generally in $H^1_{\loc}(\R^2)$).
Here the same constants $c_n$ work on every compact subset because
$\R^2\setminus Y$ is connected and one can connect different  compact sets to $B_0$
by finitely many overlapping balls and apply Poincaré's inequality
on their union.

We now prove the following alternative:
\begin{enumerate}
\item $\widetilde V_n$ is bounded in
      $C^{1,\alpha}_{\mathrm{loc}}(\R^2\setminus Y)$; or
\item $\widetilde V_n\to-\infty$ locally uniformly on
      $\R^2\setminus Y$.
\end{enumerate}

Let $\Omega'\Subset\Omega\Subset\R^2\setminus Y$
be smooth and connected. Let $W_n$ solve
\[
 \begin{cases}
 -\Delta_{\widehat g_n}W_n
 =f_ne^{U_n}-\lambda_ns_{A,n}^2&\text{in }\Omega,\\
 W_n=0&\text{on }\partial\Omega,
 \end{cases}
\]
and set
\[
 Z_n:=\widetilde V_n-W_n.
\]
By \eqref{eq:cluster-source-Lp} and the uniform ellipticity of
$\widehat g_n$, the Dirichlet estimates give
$\|W_n\|_{W^{2,p}(\Omega)}
\le C_\Omega$.
Since $p>2$, for every $\alpha\le 1-\frac2p$,
\[
\|W_n\|_{C^{1,\alpha}(\overline\Omega)}
\le C_\Omega.
\]

Furthermore,
\[
 \int_{\Omega'}U_n^+\,dy
 \le \int_{\Omega'}e^{U_n}\,dy
 \le C,
\]
and \eqref{eq:H1-cluster-close} implies
\[
 \int_{\Omega'}\widetilde V_n^+\,dy\le C.
\]
It follows that
\[
 \int_{\Omega'}Z_n^+\,dy\le C.
\]

We now proceed similarly to the proof of
Proposition~\ref{propextraction}. Fix a ball
\[
B_{2r}(z)\Subset\Omega'\Subset\Omega\Subset\R^2\setminus Y.
\]
Since $Z_n$ is $\widehat g_n$-harmonic and
$\widehat g_n\to g_{\R^2}$ in $C^2_{\loc}$, the local estimates for
nonnegative subsolutions, with constants uniform in $n$, yield
\[
\sup_{B_{3r/2}(z)}Z_n
\le
\sup_{B_{3r/2}(z)}Z_n^+
\le
C\int_{B_{2r}(z)}Z_n^+\,dy
\le T,
\]
see e.g.
\cite[Theorems~8.17]{gilbargtrudinger}.
Set
\[
\beta_n:=\|Z_n\|_{L^1(B_r(z))}.
\]
After passing to a subsequence, there are two possibilities.

\medskip

\noindent\textbf{Case 1: $\beta_n\le C$.} If $(\beta_n)$ is bounded, the interior estimates for
$\widehat g_n$-harmonic functions give
\[
\|Z_n\|_{C^{1,\alpha}(B_{r/2}(z))}\le C, \quad \text{hence also} \quad \|\widetilde V_n\|_{C^{1,\alpha}(B_{r/2}(z))}\le C.
\]

\medskip

\noindent\textbf{Case 2: $\beta_n\to+\infty$.}
Define $H_n:=T+1-Z_n$.
Then $H_n$ is positive and $\widehat g_n$-harmonic in
$B_{3r/2}(z)$. Since $Z_n^+$ is bounded in $L^1(B_r(z))$, the condition
$\beta_n\to+\infty$ implies
\[
\int_{B_r(z)}H_n\,dy\to+\infty.
\]
The uniform Harnack inequality gives
\[
\sup_{B_r(z)}H_n
\le C_H\inf_{B_r(z)}H_n.
\]
Consequently, $\inf_{B_r(z)}H_n\to+\infty$,
and therefore
\[
Z_n\to-\infty
\qquad\text{uniformly on }B_r(z),
\]
hence
$$\widetilde V_n \to-\infty
\qquad\text{uniformly on }B_r(z).$$

\medskip

The alternatives of Case 1 and Case 2 are incompatible
on overlapping balls. Since every ball 
 $B_{2r}\Subset \R^2\setminus Y$ is contained in some $\Omega'\Subset \Omega$, the above applies to all such balls. Then, since $\R^2\setminus Y$ is connected, the same alternative holds
throughout, hence after a diagonal extraction, either
\[
(\widetilde V_n)\ \text{is bounded in }
C^{1,\alpha}_{\loc}(\R^2\setminus Y),
\]
or
\begin{equation}\label{Vn-infty}
    \widetilde V_n\to-\infty
\qquad\text{locally uniformly on }\R^2\setminus Y.
\end{equation}

\medskip
\noindent
\emph{Step 4.}
Assume that the first alternative holds. Fix any $j$ with $1\le j\le m$,
and choose $r>0$ so
small that $B_{2r}(y_j)\cap Y=\{y_j\}$.
Now fix any $i\in A_j$. Proposition \ref{propextraction} and $r_{i,n}=o(s_{A,n})$ imply that the rescaled measures
\[
 \mu_n:=f_ne^{U_n}\,dV_{\widehat g_n}
\]
satisfy, after passing to a subsequence,
\begin{equation}\label{mu8pi}
 \mu_n\rightharpoonup\mu,
 \qquad
 \mu(\{y_j\})\ge 8\pi.
\end{equation}
Case 1 implies
\[
 \|\widetilde V_n\|_{L^\infty(\partial B_r(y_j))}\le C_r.
\]
Let $\zeta_n$ be the solution of
\[
 \begin{cases}
 -\Delta_{\widehat g_n}\zeta_n
 =f_ne^{U_n}-\lambda_ns_{A,n}^2
       &\text{in }B_r(y_j),\\
 \zeta_n=-C_r&\text{on }\partial B_r(y_j).
 \end{cases}
\]
Then $\widetilde V_n\ge -C_r=\zeta_n$ on $\partial B_r(y_j)$.
Since $\Delta_{\widehat g_n} (\widetilde V_n-\zeta_n)=0$,
the maximum principle gives
\begin{equation}\label{eq:comparison-cluster}
 \widetilde V_n\ge\zeta_n
 \quad\text{in }B_r(y_j).
\end{equation}

By elliptic estimates (see \cite{stampacchia1965egularity}), $(\zeta_n)$ is bounded in $W^{1,s}$ for every $s<2$.
After passing to a subsequence it converges almost everywhere and
locally uniformly away from $y_j$ to $\zeta$ solving
\[
 -\Delta\zeta=\mu
 \quad\text{in }B_r(y_j),
 \qquad
 \zeta=-C_r\quad\text{on }\partial B_r(y_j).
\]
Then \eqref{mu8pi} gives
\begin{equation}\label{eq:log-lower-cluster}
 \zeta(y)
 \ge
 4\log\frac1{|y-y_j|}-C
 \qquad\text{near }y_j.
\end{equation}
Fix $0<\eta<r/2$. On the fixed annulus
 $A_{\eta,r/2}(y_j)$,
we have in Case 1 a uniform
$C^1$ bound for $\widetilde V_n$. By
\eqref{eq:H1-cluster-close} and Lemma \ref{exppqmean},
\[
 \int_{A_{\eta,r/2}(y_j)}
 \big|e^{U_n}-e^{\widetilde V_n}\big|\,dy
 \to0.
\]
Since, for $R>2$ and $n$ large,
\[
 \int_{B_R(0)}e^{U_n}\,dV_{\widehat g_n}
 \leq
 \int_M e^{u_n}\,dV_g
 \le C,
\]
we obtain, using \eqref{eq:comparison-cluster} and Fatou's lemma,
\[
 \int_{A_{\eta,r/2}(y_j)}e^\zeta\,dy\le C,
\]
where $C$ is independent of $\eta$. Letting $\eta\downarrow0$
contradicts \eqref{eq:log-lower-cluster}, because for some $\delta>0$
\[
 e^\zeta
 \ge \frac{\delta}{|\cdot -y_j|^{4}}
 \notin L^1(B_r(y_j)).
\]
Thus Case 1 is impossible.

\medskip
\noindent
\emph{Step 5.}
We have proven \eqref{Vn-infty}.
In particular, for a prescribed compact set $K\Subset \R^2\setminus Y$ and every $q>1$,
\[
 \int_K e^{q\widetilde V_n}\,dy\to0.
\]
Combining this with
\eqref{eq:H1-cluster-close} and Lemma \ref{exppqmean} gives
\[
 \int_K e^{U_n}\,dV_{\widehat g_n}\to0,
\]
and therefore
\[
 \int_{F_n(K)}\lambda_nhe^{u_n}\,dV_g
 =
 \int_K f_ne^{U_n}\,dV_{\widehat g_n}
 \to0.
\]
Since, for every $\delta>0$ there exists a compact set $K\Subset \R^2\setminus Y$ such that
$$B^g_{s_{A,n}/\delta}(x_{A,n})\setminus \bigcup_{j=1}^mB^g_{\delta s_{A,n}}(x_{A_j,n})\subset F_n(K),$$
the proposition is proven.
\end{proof}

Next, we inductively prove that around each cluster $A$, at scale $s_{A,n}$ a mass of $8\pi|A|$ concentrates, while on the necks, up to distance $S_{A,n}$ no further mass concentrates.
\begin{prop}\label{propclusterquant}
Let $A$ be any node of the tree $\mathcal T(a_\alpha)$. Then
\begin{equation}\label{clustercoremass}\tag{$\mathcal{Q}(A)$}
 \lim_{R\to\infty}\lim_{n\to\infty}
 \int_{B^g_{Rs_{A,n}}(x_{A,n})}\lambda_nhe^{u_n}\dV
 =8\pi|A|,
\end{equation}
and
\begin{equation}\label{clusterneck}\tag{$\mathcal{N}(A)$}
 \lim_{R\to\infty}\limsup_{n\to\infty}
 \int_{B^g_{S_{A,n}}(x_{A,n})\setminus B^g_{Rs_{A,n}}(x_{A,n})}
 \lambda_nhe^{u_n}\dV=0.
\end{equation}
\end{prop}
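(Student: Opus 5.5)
I will prove $\mathcal Q(A)$ and $\mathcal N(A)$ together by induction on the height of $A$ in $\mathcal T(a_\alpha)$. At each node I first establish $\mathcal Q(A)$. I then obtain $\mathcal N(A)$ from Proposition~\ref{propneck}, applied with $z_n=x_{A,n}$, $s_n=s_{A,n}$, $S_n=S_{A,n}$. Condition \eqref{snSn} is part (2) of Lemma~\ref{lemclustertree}; for the root it uses $S_{A,n}=S_\alpha\le S_0$, and for other nodes $S_{A,n}\to0$.

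For a leaf $A=\{i\}$, $\mathcal Q(A)$ is \eqref{convun8pi}. I then check the three hypotheses of Proposition~\ref{propneck}.
\begin{itemize}
\item Condition \eqref{N3} follows from \eqref{convun8pi}, since $8\pi>4\pi\nu_1$.
\item Condition \eqref{N2} follows because the annulus $B^g_{8Rr_{i,n}}\setminus B^g_{Rr_{i,n}}$ has mass at most the difference between the masses of the balls of radii $8R$ and $R$ around $x_{i,n}$. By \eqref{convun8pi} this difference tends to $8\pi-8\pi=0$ as $R\to\infty$.
\item Condition \eqref{N1} follows because for $d_g(x,x_{i,n})\le S_{\{i\},n}$, every other center $x_{j,n}$ satisfies $d_g(x_{i,n},x_{j,n})\ge 2S_{\{i\},n}$. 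Hence $d_g(x,x_{j,n})\ge d_g(x,x_{i,n})$, so $d_n(x)=d_g(x,x_{i,n})$. This holds for $j$ in the sibling branches by the definition of $S$. It holds for $j$ further away in the tree because distances to non-siblings are even larger (comparable to ancestor scales), and for $j\notin I(\alpha)$ by the choice $S_\alpha\le\frac18\dist(a_\alpha,a_\beta)$.
\end{itemize}
The same argument gives \eqref{N1} for a general node: if $Rs_{A,n}\le d_g(x,x_{A,n})\le S_{A,n}$, centers in $A$ lie within $s_{A,n}\le d_g(x,x_{A,n})/R$ of $x_{A,n}$, so their distance to $x$ is at least $\frac12 d_g(x,x_{A,n})$ once $R\ge2$. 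Centers outside $A$ are at distance at least $2S_{A,n}$ from $X_{A,n}$.

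For a non-leaf node $A$ with branches $A_1,\dots,A_m$, I assume $\mathcal Q(A_j)$ and $\mathcal N(A_j)$ for all $j$. Fix $\delta>0$ small. By Lemma~\ref{lemclustertree}, $S_{A_j,n}\asymp s_{A,n}$, say $S_{A_j,n}\ge c\, s_{A,n}$. I choose $\delta<c$, so that $B^g_{\delta s_{A,n}}(x_{A_j,n})\subset B^g_{S_{A_j,n}}(x_{A_j,n})$. I then split the ball $B^g_{R s_{A,n}}(x_{A,n})$ into three pieces:
\begin{itemize}
\item the cores $B^g_{R's_{A_j,n}}(x_{A_j,n})$, each carrying mass $8\pi|A_j|$ by $\mathcal Q(A_j)$;
\item the necks $B^g_{\delta s_{A,n}}(x_{A_j,n})\setminus B^g_{R's_{A_j,n}}(x_{A_j,n})$, which carry vanishing mass by $\mathcal N(A_j)$, since they sit inside $B^g_{S_{A_j,n}}$;
\item the remainder, which carries vanishing mass by Proposition~\ref{propclusterclearing} with $1/\delta\ge R$.
\end{itemize}
Summing the three pieces gives $\mathcal Q(A)$ with total $8\pi\sum_j|A_j|=8\pi|A|$. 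The order of limits ($n\to\infty$, then $R'\to\infty$) needs some care, but the two-sided bounds close it.

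The remaining hypotheses of Proposition~\ref{propneck} then follow as in the leaf case. Condition \eqref{N3} follows from $\mathcal Q(A)$, since $8\pi|A|\ge 8\pi>4\pi\nu_1$. Condition \eqref{N2} follows because the annulus $B^g_{8Rs_{A,n}}\setminus B^g_{Rs_{A,n}}$ is contained in the region of Proposition~\ref{propclusterclearing} with $\delta=1/(8R)$, once $R$ is large enough that $\delta s_{A,n}$-balls around the branch points lie inside $B^g_{Rs_{A,n}}(x_{A,n})$ (recall $|y_j|\le 1$). Alternatively, \eqref{N2} follows from $\mathcal Q(A)$ by taking the difference of the ball masses at radii $8R$ and $R$. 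Proposition~\ref{propneck} then yields $\mathcal N(A)$. This includes the root, where $S_n=S_\alpha$ is fixed and small.

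I expect the main obstacle to be the bookkeeping in \eqref{N1} for non-sibling centers deep in the tree. One must show that distances from $x_{A,n}$ to every center outside $A$ are at least $2S_{A,n}$. I would prove this by noting that any such center lies in a sibling branch of some ancestor $B\supseteq A$. Its distance to $X_{A,n}$ is then at least $d_g(X_{B,n},X_{\operatorname{pred}(B)\setminus B,n})-\diam X_{B,n}$. This quantity is comparable to $s_{\operatorname{pred}(B),n}$, which is much larger than $S_{A,n}$ unless $B=A$. The other delicate point is the interchange of the limits in $R$ and $R'$ in the gluing step.
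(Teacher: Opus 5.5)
Your proposal is correct and follows the paper's proof essentially step by step. You induct from the leaves to the root, obtain $\mathcal Q(A)$ at non-leaf nodes by gluing $\mathcal Q(A_j)$, $\mathcal N(A_j)$ and Proposition~\ref{propclusterclearing} through $S_{A_j,n}\asymp s_{A,n}$, and derive $\mathcal N(A)$ from Proposition~\ref{propneck} after checking \eqref{N1}--\eqref{N3}. Your explicit handling of non-sibling centers in \eqref{N1} and of the order of limits in the gluing step fills in details the paper leaves implicit; in the clearing step, apply Proposition~\ref{propclusterclearing} with parameter $\min\{\delta,1/R\}$ so that it does not conflict with your requirement $\delta<c$.
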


\begin{proof}
We argue by induction on the height of the node, from the leaves to the root.

If $A=\{i\}$ is a leaf, then $x_{A,n}=x_{i,n}$,
$s_{A,n}=r_{i,n}$,
so that \eqref{clustercoremass} follows at once from Proposition \ref{propextraction}-(i), while \eqref{clusterneck} follows from Proposition \ref{propneck} applied with $z_n=x_{i,n}$, $s_n=r_{i,n}$, $S_n=S_{\{i\},n}$ (its assumptions are all verified: \eqref{N1} is trivial by construction, while \eqref{N2}, \eqref{N3} follow  from \eqref{clustercoremass}).

Let now $A$ be a non-leaf node with branches
$A_1,\ldots,A_m$, and assume that $(\mathcal Q(A_j))$ and
$(\mathcal N(A_j))$ hold for every $j=1,\ldots,m$.
Then \eqref{clustercoremass} follows at once from this inductive hypothesis and Proposition \ref{propclusterclearing}, together with $S_{A_j,n}\asymp s_{A,n}$ for $1\le j\le m$.

It remains to prove \eqref{clusterneck}. We want to apply Proposition~\ref{propneck} with $z_n=x_{A,n}$, $s_n=s_{A,n}$ and $S_n=S_{A,n}$. Set
\[
    \delta_{A,n}(R)
    :=
    \int_{
        B^g_{8Rs_{A,n}}(x_{A,n})
        \setminus
        B^g_{Rs_{A,n}}(x_{A,n})
    }
    \lambda_nhe^{u_n}\,dV_g.
\]
Property $\eqref{clustercoremass}$ gives
\[
    \lim_{R\to\infty}
    \limsup_{n\to\infty}
    \delta_{A,n}(R)
    =0,
\]
hence condition \eqref{N2} is satisfied.

We next verify \eqref{N1}. Let
\[
    Rs_{A,n}
    \leq
    t:=d_g(x,x_{A,n})
    \leq
    S_{A,n}.
\]
Since $x_{A,n}\in X_{A,n}$ and
$\operatorname{diam}_gX_{A,n}= s_{A,n}$, for every $i\in A$,
\[
    d_g(x,x_{i,n})
    \geq
    t-s_{A,n}
    \geq
    \left(1-\frac1R\right)t.
\]
Moreover, by the definition of $S_{A,n}$ and the separation of the
different branches of the tree, every center outside $A$ is, for all
large $n$, at distance at least $2S_{A,n}$ from $x_{A,n}$. Therefore,
for every $k\notin A$,
\[
    d_g(x,x_{k,n})
    \geq
    2S_{A,n}-t
    \geq
    S_{A,n}
    \geq t.
\]
Thus, for $R\geq2$,
\[
    d_n(x)
    \geq
    \frac12d_g(x,x_{A,n})
\quad \text{whenever} \quad
    Rs_{A,n}
    \leq d_g(x,x_{A,n})
    \leq S_{A,n}.
\]
Condition \eqref{N1} is therefore satisfied. Condition \eqref{N3} follows immediately from \eqref{clustercoremass}. Then, applying Proposition~\ref{propneck},
we obtain \eqref{clusterneck} and complete the induction.
\end{proof}

Finally, we prove that locally away from each blow-up point $a_\alpha$, no mass concentrates.
\begin{prop}\label{nodiffuse}
For every $\delta>0$ we have
\begin{equation}\label{nodiffuseeq}
\limsup_{n\to\infty}
\int_{M\setminus\bigcup_{\alpha=1}^NB^g_\delta(a_\alpha)}
\lambda_nhe^{u_n}\dV=0.
\end{equation}
\end{prop}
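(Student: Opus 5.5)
We argue in the spirit of Step 2 to Step 5 of Proposition~\ref{propclusterclearing}, but without rescaling. Fix $\delta>0$ and set $\Omega:=M\setminus\bigcup_{\alpha}\overline{B^g_{\delta/2}(a_\alpha)}$. Since every center $x_{i,n}$ converges to some $a_\alpha$, for $n$ large we have $d_n(x)\ge\delta/4$ on $\Omega$. Hence \eqref{nofurther}, applied with the fixed radius $r=\delta/(4C_*)$ and a finite covering, gives a uniform bound $\|e^{u_n}\|_{L^p(\Omega')}\le C_\delta$ for some $p>2$ on any $\Omega'\Subset\Omega$. It is convenient to use a slightly smaller set such as $M\setminus\bigcup_\alpha B^g_{\delta/4}(a_\alpha)$ to have room.

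Next normalize. Choose a reference geodesic ball $B_0\Subset\Omega$ and constants $c_n$ with $\fint_{B_0}(u_n-v_n-c_n)=0$, and set $\widetilde v_n:=v_n+c_n$. By Lemma~\ref{lemclose} and Poincaré's inequality on connected subdomains, $u_n-\widetilde v_n\to0$ in $H^1$ of every connected $\Omega'\Subset\Omega$. If $\Omega$ is disconnected, we work on each component with its own constant. Since $M$ is a connected surface and we remove finitely many small disjoint balls, $\Omega$ is in fact connected.

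Then we repeat the dichotomy of Proposition~\ref{propextraction}, Case~1 and Case~2. On a small ball $B\subset\Omega'$ we write $\widetilde v_n=W_n+Z_n$, where $W_n$ solves the Dirichlet problem with right-hand side $\lambda_n(he^{u_n}-1)$, which is bounded in $L^p$, $p>2$. This gives $W_n$ bounded in $C^{1,\alpha}$ and $Z_n$ harmonic. Moreover $\int Z_n^+\le C$, because $\int e^{u_n}\le C$ and $u_n-\widetilde v_n\to0$ in $H^1$. The mean value inequality for subharmonic functions, combined with the Harnack inequality, yields one of two alternatives: either $\widetilde v_n$ is bounded in $C^{1,\alpha}_{\loc}(\Omega)$, or $\widetilde v_n\to-\infty$ locally uniformly on $\Omega$. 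By connectedness the same alternative holds throughout $\Omega$. In the second alternative, Lemma~\ref{exppqM} (or Lemma~\ref{exppqmean} in charts) gives $\int_K e^{u_n}\to0$ on compacts $K\subset\Omega$, and taking $K=M\setminus\bigcup_\alpha B^g_\delta(a_\alpha)$ proves \eqref{nodiffuseeq}.

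The main step is to rule out the bounded alternative, and it is done as in Step~4 of Proposition~\ref{propclusterclearing}. Pick a blow-up point $a_1$ (one exists since $\ell\ge1$) and a small $r<\delta$ so that $\partial B^g_r(a_1)\subset\Omega$. In the bounded case $\widetilde v_n\ge -C_r$ on $\partial B^g_r(a_1)$. Compare $\widetilde v_n$ with the solution $\zeta_n$ of $-\Delta_g\zeta_n=\lambda_n(he^{u_n}-1)$ in $B^g_r(a_1)$ with $\zeta_n=-C_r$ on the boundary; the maximum principle gives $\widetilde v_n\ge\zeta_n$. By Proposition~\ref{propextraction}-(i), the measures $\lambda_nhe^{u_n}\dV$ carry mass at least $8\pi$ at $a_1$ in the limit. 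The Stampacchia limit $\zeta$ then satisfies $\zeta\ge4\log\frac1{d_g(\cdot,a_1)}-C$, so $e^\zeta\notin L^1$ near $a_1$.

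On the other hand, on annuli $A^g_{\eta,r/2}(a_1)$ we need $\int e^{\widetilde v_n}\le C$ uniformly in $\eta$. This follows from Lemma~\ref{exppqmean} together with $\int_M e^{u_n}\le C$, applied on each fixed annulus where $\widetilde v_n$ is bounded in $C^1$. The $C^1$ bound on the annulus near $a_1$ is the delicate point: the annulus must avoid the other centers, which is true since they all converge to $a_1$ and lie within distance $o(1)<\eta$ of it. Hence annuli $A^g_{\eta,r/2}(a_1)$ with $\eta<\delta/2$ lie in $M\setminus\bigcup_\alpha B^g_{\eta}(a_\alpha)$, and the dichotomy should be run with $\delta$ replaced by arbitrarily small $\eta$, keeping the same normalization $c_n$ through overlapping connected domains. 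Fatou's lemma then gives $\int_{A_{\eta,r/2}}e^\zeta\le C$ independently of $\eta$, which is a contradiction. Therefore only the $-\infty$ alternative survives, and \eqref{nodiffuseeq} follows.
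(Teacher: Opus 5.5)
Your proposal is correct and follows the paper's proof essentially step by step. The steps are: the $L^p$ bound from \eqref{nofurther} away from the blow-up points; the Harnack dichotomy propagated over the connected set $M\setminus\{a_1,\dots,a_N\}$; exclusion of the bounded alternative by comparison with $\zeta_n$ and the atom of mass at least $8\pi$ at $a_1$; and the conclusion via Lemma~\ref{exppqM}. Your final fix of running the dichotomy for arbitrarily small $\eta$ is genuinely needed, since with $r<\delta$ the annuli $A^g_{\eta,r/2}(a_1)$ lie inside $B^g_{\delta/2}(a_1)$ and hence outside your original $\Omega$, and that fix is exactly the paper's Step 2. The remaining differences are cosmetic: your constants $c_n$ are unnecessary (they tend to $0$, since $\int_M(v_n-u_n)\dV=0$ and Lemma~\ref{lemclose} already give $u_n-v_n\to0$ in $H^1(M)$), and the $8\pi$ atom from Proposition~\ref{propextraction} suffices in place of the $8\pi m_\alpha$ the paper takes from Proposition~\ref{propclusterquant}.
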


\begin{proof}
Set
\[
\mu_n:=\lambda_n he^{u_n}\,dV_g,\qquad \mathcal{A}:=\{a_1,\dots,a_N\},\qquad
m_\alpha:=\#\{i:\ x^{(i)}=a_\alpha\},
\]
and, since $\mu_n(M)=\lambda_n\le\sup_n\lambda_n<+\infty$, assume after passing to a subsequence that
$\mu_n\rightharpoonup\mu$ weakly as measures on $M$. 

The proof follows the scheme of Proposition \ref{propclusterclearing}, with $M\setminus \mathcal{A}$ replacing $\mathbb R^2\setminus Y$, $g$ replacing $\widehat g_n$ and $(u_n,v_n)$ replacing $(U_n,\widetilde V_n)$. By Lemma \ref{lemclose}, $u_n-v_n\to 0$ in $H^1(M)$
so, unlike in Proposition \ref{propclusterclearing}, no normalizing constants $c_n$ are needed here. Moreover we will use Lemma \ref{exppqM} instead of Lemma \ref{exppqmean}.

\medskip
\noindent\emph{Step 1.} Let $\Omega\Subset M\setminus \mathcal{A}$. Since $x_{i,n}\to x^{(i)}\in \mathcal{A}$
for every $i$, there is $d_\Omega>0$ with $d_n(x)\ge d_\Omega$ on $\Omega$ for all large $n$.
Covering $\Omega$ by finitely many balls of radius $d_\Omega/C_*$ and applying \eqref{nofurther}
gives, for the exponent $p>2$ fixed in Proposition \ref{propextraction},
\begin{equation}\label{eq:global-local-Lp}
\|e^{u_n}\|_{L^p(\Omega)}\le C_\Omega .
\end{equation}

\medskip
\noindent \emph{Step 2.} We claim that, after passing to a subsequence, either
\begin{enumerate}
\item[(a)] $(v_n)$ is bounded in $C^{1,\alpha}_{loc}(M\setminus \mathcal{A})$, or
\item[(b)] $v_n\to-\infty$ locally uniformly on $M\setminus \mathcal{A}$.
\end{enumerate}
This follows exactly as in Step 3 of the proof of Proposition \ref{propclusterclearing}, considering the following:
 the bound \eqref{eq:global-local-Lp}, which by elliptic
estimates gives
$$\|z_n\|_{W^{2,p}(\Omega)}+\|z_n\|_{C^{1,\alpha}(\overline\Omega)}\le C_\Omega$$
for $z_n$ solving
$$-\Delta_g z_n=\lambda_n(he^{u_n}-1)\quad \text{in }\Omega, \qquad  z_n=0\quad \text{on }\partial\Omega;$$
the bound
\[
\int_\Omega v_n^+\,dV_g\le \int_\Omega u_n^+\,dV_g+\|u_n-v_n\|_{L^1(M)}
\le \int_M e^{u_n}\,dV_g+o(1)\le C,
\]
which by the previous line gives $\int_\Omega H_n^+\,dV_g\le C_\Omega$ for the harmonic part
$H_n:=v_n-z_n$; and the Harnack alternative for $H_n$ (\cite[Thm.\ 8.20]{gilbargtrudinger}), which is incompatible on overlapping balls and therefore propagates over the
connected set $M\setminus \mathcal{A}$.

\medskip
\noindent\emph{Step 3.} This is Step 4 of the proof of
Proposition \ref{propclusterclearing}, with the inductive hypothesis $\mathcal Q(A_j)$ replaced by
Proposition \ref{propclusterquant} applied to the root $A_\alpha:=I(\alpha)$ of $\mathcal T(a_\alpha)$, which yields
\begin{equation}\label{eq:root-atom-lower}
\mu(\{a_\alpha\})\ \ge\ 8\pi m_\alpha\ \ge\ 8\pi ,\qquad \alpha=1,\dots,N.
\end{equation}
(Indeed $B^g_{Rs_{A_\alpha,n}}(x_{A_\alpha,n})\subset B^g_\delta(a_\alpha)$ for $n$ large; let
$n\to\infty$, then $R\to\infty$, then $\delta\downarrow0$.)

Explicitly: fix $\alpha$ and $r>0$ with $\overline{B^g_{2r}(a_\alpha)}\cap \mathcal{A}=\{a_\alpha\}$.
Under (a) we have
$$\|v_n\|_{L^\infty(\partial B^g_r(a_\alpha))}\le C_r.$$ Let $\zeta_n$ solve
\[
-\Delta_g\zeta_n=\lambda_n(he^{u_n}-1)\ \text{ in }B^g_r(a_\alpha),\qquad
\zeta_n=-C_r\ \text{ on }\partial B^g_r(a_\alpha).
\]
Then $v_n-\zeta_n$ is $g$-harmonic and nonnegative on the boundary, so $v_n\ge\zeta_n$ in
$B^g_r(a_\alpha)$. Since $\mu_n$ has uniformly bounded mass, $(\zeta_n)$ is bounded in $W^{1,s}$ for
every $s<2$. By \eqref{eq:global-local-Lp} and elliptic estimates, $(\zeta_n)$ is also bounded in $W^{2,p}_{\loc}(\overline{B_r^g(a_\alpha)}\setminus\{a_\alpha\})$ for some $p>2$ and, up to a subsequence, converges a.e.\ and locally uniformly away from $a_\alpha$ to
$\zeta$ with $-\Delta_g\zeta=\mu\llcorner B^g_r(a_\alpha)-\lambda_{\infty}\,dV_g$, $\zeta=-C_r$ on the boundary.
By \eqref{eq:root-atom-lower} and
\eqref{greenest},
\begin{equation}\label{eq:global-log-lower}
\zeta(x)\ \ge\ 4m_\alpha\log\frac{1}{d_g(x,a_\alpha)}-C\qquad\text{near }a_\alpha .
\end{equation}
On each fixed annulus $A^g_{\eta,r/2}(a_\alpha)$, $0<\eta<r/2$, alternative (a) gives a uniform
$C^1$ bound on $v_n$, hence $\int_{A^g_{\eta,r/2}}e^{qv_n}\,dV_g\le C_\eta$ for some $q>1$; by
Lemma \ref{lemclose} and Lemma \ref{exppqM} (with exponent $1$),
$\int_{A^g_{\eta,r/2}}|e^{u_n}-e^{v_n}|\,dV_g\to0$. Hence, by $v_n\ge\zeta_n$, Fatou, and
$\int_M e^{u_n}\,dV_g\le(\min_M h)^{-1}$,
\[
\int_{A^g_{\eta,r/2}(a_\alpha)}e^{\zeta}\,dV_g\ \le\ \liminf_{n\to\infty}\int_{A^g_{\eta,r/2}(a_\alpha)}e^{v_n}\,dV_g\ \le\ C,
\]
with a bound independent of $\eta$. Letting $\eta\downarrow0$ and using monotone convergence
contradicts \eqref{eq:global-log-lower}, since $m_\alpha\ge1$ gives
$e^{\zeta}\ge C^{-1}d_g(\cdot,a_\alpha)^{-4m_\alpha}\notin L^1$ near $a_\alpha$.

\medskip
\noindent\emph{Step 4.} Hence (b) holds. Fix $\delta>0$ small and set
$$K_\delta:=M\setminus\bigcup_{\alpha=1}^N B^g_\delta(a_\alpha)\Subset M\setminus \mathcal{A}.$$
As in Step 5 of the proof of Proposition \ref{propclusterclearing}, (b) gives $\int_{K_\delta}e^{qv_n}\,dV_g\to0$ for every
$q>1$, and Lemma \ref{exppqM} with exponent $1$, together with Lemma \ref{lemclose}, yields
$\int_{K_\delta}|e^{u_n}-e^{v_n}|\,dV_g\to0$, whence $\int_{K_\delta}e^{u_n}\,dV_g\to0$. Since
$\lambda_n h$ is uniformly bounded, this holds for every small $\delta>0$ and \eqref{nodiffuseeq}
follows.
\end{proof}

\begin{proof}[Proof of Theorem~\ref{trmmain} (completed)]
Parts (i)-(iii) follow  from Proposition~\ref{propextraction}-(i).
It remains to prove \textup{(iv)}. 

Let $a_1,\dots,a_N$ be the distinct points among
$x^{(1)},\dots,x^{(\ell)}$, and recall that
\[
I(\alpha):=\{i:x^{(i)}=a_\alpha\},
\qquad
m_\alpha:=|I(\alpha)|.
\]
Applying Proposition~\ref{propclusterquant} to the roots  $A=I(\alpha)$, recalling that $S_{A,n}=S_\alpha>0$, we obtain
\begin{equation}\label{massmalpha}
    \lim_{R\to+\infty}\lim_{n\to\infty}
\int_{B^g_{R s_{A,n}}(x_{A,n})} \lambda_nhe^{u_n}\,\dV
=
8\pi m_\alpha,
\end{equation}
and, using that $x_{A,n}\to a_\alpha$ for any choice of $x_{A,n}$ in the cluster $I(\alpha)$, with \eqref{clusterneck} for $I(\alpha)$:
\[
\lim_{n\to\infty}
\int_{B^g_{S_\alpha/2}(a_\alpha)} \lambda_nhe^{u_n}\,\dV
=
8\pi m_\alpha.
\]
Together with Proposition~\ref{nodiffuse}, it follows that
\[
\lim_{n\to\infty}
\int_{M} \lambda_nhe^{u_n}\,\dV
=
8\pi \sum_{\alpha=1}^N m_\alpha = 8\pi\ell.
\]
Since all the mass $8\pi\ell$ is exhausted in the disjoint balls $B^g_{Rr_{i,n}}(x_{i,n})$ as $R\to+\infty$ by \eqref{convun8pi}, part (iv) of Theorem \ref{trmmain} follows.
\end{proof}

\section{An example of clustering: Proof of Theorem \ref{trmclusterexample}}\label{secexample}

In order to prove Theorem \ref{trmclusterexample}, we will first need to obtain precise profile estimates on  suitable approximate blow-up sequences.
Fix $r_0<\inj(M)/10$. Let $\chi\in C_c^\infty([0,2r_0))$ be equal to $1$ on $[0,r_0]$. For $\varepsilon>0$ and $\xi\in M$, set
\[
 \widetilde\rho_{\varepsilon,\xi}(x)
 :=\chi(d_g(x,\xi))\frac{8\varepsilon^2}{(\varepsilon^2+d_g(x,\xi)^2)^2},
 \qquad
 \rho_{\varepsilon,\xi}
 :=8\pi\frac{\widetilde\rho_{\varepsilon,\xi}}
 {\int_M\widetilde\rho_{\varepsilon,\xi}\dV}=:\lambda_{\eps,\xi}\tilde{\rho}_{\eps,\xi}.
\]
Let $P_{\varepsilon,\xi}$ be the unique solution of
\begin{equation}\label{projectedbubble}
 -\Delta_gP_{\varepsilon,\xi}=\rho_{\varepsilon,\xi}-8\pi,
 \qquad \int_MP_{\varepsilon,\xi}\dV=0.
\end{equation}

\begin{lemma}\label{lem:lambda_eps-estimate}
    There exists $C>0$ and $\eps_*>0$ such that for any $0<\eps<\eps_*$ and $\forall \xi\in M$ 
    \begin{equation}\label{eq:lambda_eps-estimate}
        \big|\lambda_{\eps,\xi}-1\big|\leq C\eps^2 \log\Big(\frac{1}{\eps}\Big).
    \end{equation}
\end{lemma}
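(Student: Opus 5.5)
Since $\lambda_{\eps,\xi}=8\pi\big/\int_M\widetilde\rho_{\eps,\xi}\dV$, the estimate \eqref{eq:lambda_eps-estimate} is equivalent to
\[
\Big|\int_M\widetilde\rho_{\eps,\xi}\dV-8\pi\Big|\le C\eps^2\log\frac1\eps,
\]
uniformly in $\xi\in M$. Once this holds, the claim follows because $\int_M\widetilde\rho_{\eps,\xi}\dV$ is then bounded away from $0$ for $\eps<\eps_*$. To prove the displayed bound, I will compute the integral in geodesic polar coordinates centered at $\xi$, exactly as in \eqref{defJn}. Since $\chi$ is supported in $[0,2r_0)$ and $2r_0<\inj(M)$, these coordinates are valid on the whole support, and
\[
\int_M\widetilde\rho_{\eps,\xi}\dV=\int_0^{2r_0}\chi(t)\frac{8\eps^2}{(\eps^2+t^2)^2}\int_0^{2\pi}J_\xi(t,\theta)\,d\theta\,dt,
\]
where $J_\xi(t,\theta)=t(1+E_\xi(t,\theta))$ and $|E_\xi|\le Ct^2$. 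The constant $C$ is uniform in $\xi$ by compactness of $M$ and smoothness of $g$.

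I split this integral into a flat part and an error part. In the flat part $E_\xi$ is replaced by $0$. Its model value is
\[
2\pi\int_0^\infty\frac{8\eps^2t}{(\eps^2+t^2)^2}dt=8\pi.
\]
The flat part differs from $8\pi$ only through the cutoff, i.e. through the region $t\ge r_0$, where $1-\chi\ne0$. There the integrand is bounded by $C\eps^2t^{-3}$, so the tail contributes $O(\eps^2)$ uniformly.

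The error part is the only place where the logarithm appears. It is bounded by
\[
C\int_0^{2r_0}\frac{\eps^2t^3}{(\eps^2+t^2)^2}dt .
\]
Substituting $t=\eps s$ turns this into $C\eps^2\int_0^{2r_0/\eps}\frac{s^3}{(1+s^2)^2}ds$. This integral is $C\eps^2\big(\log\frac1\eps+O(1)\big)$, since the integrand behaves like $1/s$ at infinity.

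Combining the two parts gives the displayed bound, and hence \eqref{eq:lambda_eps-estimate} with $\eps_*$ small. The only point that needs care is the uniformity in $\xi$ of the expansion $J_\xi=t(1+O(t^2))$ for $t\le2r_0$. This follows from the standard expansion $J=t-\frac{K(\xi)}{6}t^3+O(t^4)$, with a remainder controlled by the $C^2$ norm of the curvature on the compact manifold. There is no real obstacle beyond this.
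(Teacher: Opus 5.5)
Your proposal is correct and follows the same route as the paper: you reduce to $\bigl|\int_M\widetilde\rho_{\eps,\xi}\dV-8\pi\bigr|\le C\eps^2\log\frac1\eps$ by comparing with the Euclidean bubble mass \eqref{liouvol}, where the cutoff tail contributes $O(\eps^2)$ and the $O(t^2)$ correction to the volume density produces the logarithm. The paper only asserts this estimate as easy, and your computation in geodesic polar coordinates (uniform in $\xi$ by compactness) supplies exactly the omitted details.
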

\begin{proof}
    Recalling \eqref{liouvol} and the definition of $\tilde{\rho}_{\eps,\xi}$, it is easy to estimate
    \begin{align*}
        \Big|\int_M \tilde{\rho}_{\eps,\xi}\,dV_g -8\pi\Big|\leq C\eps^2 \log\Big(\frac{1}{\eps}\Big),
    \end{align*}
    for a suitable $C>0$ and $\eps$ small. Then \eqref{eq:lambda_eps-estimate} immediately follows.
\end{proof}

Given $x,\xi\in M$, let us define
\begin{equation}\label{eq:V_eps}
    V_{\eps,\xi}(x):=-2\chi(d_g(x,\xi))\log\big(\eps^2+d_g(x,\xi)^2\big).
\end{equation}
Recall that
\begin{align*}
    -\Delta\big(-2\log (\eps^2+|x|^2)\big)=\frac{8\eps^2}{(\eps^2+|x|^2)^2}.
\end{align*}
We also define the regular part $H$ of the Green's function as
\begin{equation}\label{eq:H-green-def}
    G(x,y):=\chi(d_g(x,y))\frac{1}{2\pi}\log\frac{1}{d_g(x,y)}+H(x,y), 
\end{equation}
recalling it fundamental role for the blow-up analysis in \cite{chenlin-cpam02}. 

We now obtain a uniform estimate of the projected bubble $P_{\varepsilon,\xi}$ in terms of the profile $V_{\epsilon,\xi}$ and the regular part of the Green's function.

\begin{lemma}\label{lem:profile-estimate}
    For all $\beta\in (0,1)$ there exists $C_\beta>0,\eps_\beta>0$ such that $\forall \eps<\eps_\beta$ and $\forall \xi\in M$ one has
    \begin{equation}\label{eq:profile-estimate}
        \sup_{x\in M}\Big| P_{\eps,\xi}(x)- V_{\eps,\xi}(x)-8\pi H_{\xi}(x)\Big|\leq C_\beta \eps^{2-\beta},
    \end{equation}
    where $H_\xi(x):=H(x,\xi)$. In particular, there exists $C>1$ such that
    \begin{equation}\label{eq:profile-est-conseq}
        \frac{1}{C(\eps^2 + d_g(x,\xi)^2)^2}\leq e^{P_{\eps,\xi}(x)}\leq\frac{C}{(\eps^2 + d_g(x,\xi)^2)^2} \qquad \forall x\in M.
    \end{equation}
\end{lemma}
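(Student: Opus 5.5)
The plan is to compare $P_{\eps,\xi}$ with the explicit function $\Phi_{\eps,\xi}:=V_{\eps,\xi}+8\pi H_\xi$ through the Green representation of Lemma \ref{green}. By \eqref{greenrep},
\[
P_{\eps,\xi}(x)=\int_M G(x,y)\rho_{\eps,\xi}(y)\dV(y),
\]
because $\int_M\rho_{\eps,\xi}\dV=8\pi$, $\bar P_{\eps,\xi}=0$, and $\int_M G(x,y)\dV(y)=0$. We also have $8\pi G(x,\xi)=8\pi H_\xi(x)+4\chi(d_g(x,\xi))\log\frac1{d_g(x,\xi)}$ by \eqref{eq:H-green-def}, and $4\log\frac1d=-2\log d^2$. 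Hence
\[
P_{\eps,\xi}(x)-8\pi G(x,\xi)=\int_M\bigl(G(x,y)-G(x,\xi)\bigr)\rho_{\eps,\xi}(y)\dV(y),
\]
and the claim reduces to showing that this integral equals
\[
-2\chi(d_g(x,\xi))\log\frac{\eps^2+d_g(x,\xi)^2}{d_g(x,\xi)^2}+O(\eps^{2-\beta})
\]
uniformly in $x$ and $\xi$.

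I would split $G(x,y)-G(x,\xi)$ into its regular and singular parts. For the regular part, $H\in C^{1,\gamma}$ for every $\gamma<1$, uniformly, since $-\Delta_y H=\delta$-free smooth terms $-1$ plus terms coming from $\chi$. So
\[
|H(x,y)-H(x,\xi)-\nabla_yH(x,\xi)\cdot\exp_\xi^{-1}y|\le C d_g(y,\xi)^{1+\gamma}.
\]
The linear term integrates to $O(\eps^2)$ against the nearly radial density $\rho_{\eps,\xi}$; radial symmetry kills it up to metric corrections $O(d^2)$. The remainder gives
\[
\int d^{1+\gamma}\rho_{\eps,\xi}\lesssim\eps^{1+\gamma},
\]
which with $\gamma=1-\beta$ is $O(\eps^{2-\beta})$. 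This is where $\beta$ enters. The cutoff terms involving $\chi(d_g(x,y))$ versus $\chi(d_g(x,\xi))$ are smooth off the diagonal, and their contribution is of the same type.

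The singular part is $\frac1{2\pi}\int\bigl(\log\frac1{d_g(x,y)}-\log\frac1{d_g(x,\xi)}\bigr)\rho_{\eps,\xi}(y)\dV(y)$. In normal coordinates at $\xi$, I would replace $dV_g$ by $dy$, $d_g(x,y)$ by $|x-y|$, and $\rho_{\eps,\xi}$ by $8\eps^2/(\eps^2+|y|^2)^2$, using Lemma \ref{lem:lambda_eps-estimate} for the normalization. The metric distortions cost a relative factor $O(|y|^2+|x|^2)$, and the corresponding error terms are bounded by $O(\eps^2\log\frac1\eps)$. The Euclidean computation is then exact: $U=-2\log(\eps^2+|x|^2)$ satisfies $-\Delta U=8\eps^2(\eps^2+|x|^2)^{-2}$, with total mass $8\pi$. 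Newton's theorem for radial measures gives
\[
\frac1{2\pi}\int_{\R^2}\log\frac1{|x-y|}\frac{8\eps^2}{(\eps^2+|y|^2)^2}dy=-2\log(\eps^2+|x|^2)+\text{const},
\]
and comparing behavior at infinity shows the constant is $0$. The tails beyond $r_0$ contribute $O(\eps^2)$. Inserting this yields exactly $V_{\eps,\xi}-4\chi\log\frac1d$ inside $B_{r_0}(\xi)$. Outside $B_{2r_0}$ both sides are $O(\eps^2)$ by the mean value theorem, since $\rho$ has mass concentrated within distance $\eps$.

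The main obstacle is uniformity when $x$ is close to $\xi$. There the metric corrections to the logarithmic kernel, namely $\log d_g(x,y)-\log|x-y|=O(|x|^2+|y|^2)$ in normal coordinates, must be controlled without losing powers of $\eps$. I would handle this by writing the correction as a bounded Lipschitz function and integrating it against $\rho$. This only gives $O(\eps^2\log\frac1\eps)$, which is still $\le C_\beta\eps^{2-\beta}$.

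Finally, \eqref{eq:profile-est-conseq} follows by exponentiating. $H_\xi$ is uniformly bounded, and the $O(\eps^{2-\beta})$ error is bounded. Moreover $e^{V_{\eps,\xi}}=(\eps^2+d^2)^{-2}$ on $B_{r_0}(\xi)$, while off $B_{r_0}(\xi)$ both $e^{V_{\eps,\xi}}$ and $(\eps^2+d^2)^{-2}$ are bounded above and below by constants.
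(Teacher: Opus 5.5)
Your route is genuinely different from the paper's. You use the Green representation $P_{\eps,\xi}=\int_M G(\cdot,y)\rho_{\eps,\xi}(y)\dV(y)$, reduce to $\int_M(G(x,y)-G(x,\xi))\rho_{\eps,\xi}(y)\dV(y)$, handle the regular part by a $C^{1,\gamma}$ Taylor expansion with radial cancellation, and handle the singular part with the flat Newton potential of the bubble. The paper never compares kernels. It computes $\Delta_g(P_{\eps,\xi}-V_{\eps,\xi}-8\pi H_\xi)$ in normal coordinates at $\xi$, where the metric enters only through $\Psi_g=\partial_t\log\sqrt{\det g}=O(t)$ acting on radial functions. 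It bounds this pointwise by $C\eps^2\log(1/\eps)/(\eps^2+d_g(x,\xi)^2)$, hence in $L^p$ by $C_p\eps^{2/p}\log(1/\eps)$. It concludes with $W^{2,p}$ estimates, the embedding $W^{2,p}\hookrightarrow C^0$ as $p\downarrow 1$ (this is where $\beta$ is lost), and the bound on the mean $\bar c_{\eps,\xi}$.

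Your regular-part, Newton-potential and normalization steps are essentially fine. The gap is in the step you yourself flag: the metric distortion of the logarithmic kernel. Write $x=\exp_\xi\bar x$, $y=\exp_\xi\bar y$ and $k(\bar x,\bar y):=\log d_g(x,y)-\log|\bar x-\bar y|$. Integrating $|k|\le C(|\bar x|^2+|\bar y|^2)$ against $\rho_{\eps,\xi}$ gives $8\pi C|\bar x|^2+O(\eps^2\log\frac1\eps)$, which is of order one when $d_g(x,\xi)\sim r_0$. So the claim that these errors are $O(\eps^2\log\frac1\eps)$ is false. Moreover, the problematic regime is not $x$ close to $\xi$: for $d_g(x,\xi)\le\eps^{1-\beta/2}$ the crude bound already suffices. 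The trouble is $d_g(x,\xi)\gg\eps^{1-\beta/2}$.

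Your remedy does not work either. $k(\bar x,\cdot)$ is in general not even continuous at $\bar y=\bar x$, since its limit there is $\frac12\log(g_{ij}(\bar x)\omega^i\omega^j)$, which depends on the direction $\omega$ of approach. And even a genuine Lipschitz bound would only give $\int|\bar y|\rho_{\eps,\xi}\,d\bar y\asymp\eps$.

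The missing ingredient has two parts. First, $k(\bar x,0)=0$ exactly, because geodesics from $\xi$ are straight lines in normal coordinates. Second, you need scale-adapted bounds: rescaling by $|\bar x|$ gives $|\nabla_{\bar y}k|\le C|\bar x|$ and $|\nabla^2_{\bar y}k|\le C$ on $\{|\bar y|\le|\bar x|/2\}$. With these:
\begin{itemize}
\item the linear Taylor term is killed by the radial symmetry of $\rho_{\eps,\xi}$, up to $O(|\bar x|\int|\bar y|^3\rho_{\eps,\xi})=O(\eps^2)$;
\item the quadratic term is $O(\int|\bar y|^2\rho_{\eps,\xi})=O(\eps^2\log\frac1\eps)$;
\item on $\{|\bar y|>|\bar x|/2\}$ the crude bound $|k|\le C|\bar y|^2$ suffices.
\end{itemize}
With this your route closes, and the loss of $\beta$ comes from $H\in C^{1,1-\beta}$ rather than from the Sobolev embedding.

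The same kind of slip occurs for $d_g(x,\xi)\ge 2r_0$. The mean value theorem only yields $O(\eps)$ there, since $\int d_g(\cdot,\xi)\rho_{\eps,\xi}\dV\asymp\eps$. You need the second-order expansion with radial cancellation that you already use for the regular part.

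Finally, $\Delta_y H(x,\cdot)$ is bounded rather than smooth, because of the term $\Psi_g/t$. This is still enough for the uniform $C^{1,\gamma}$ bound you rely on.
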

\begin{proof}
The idea is to compute the Laplacian of the left-hand side in \eqref{eq:profile-estimate} and then use elliptic estimates.
To begin, define
\begin{align}\label{eq:bar-c_eps}
    \bar{c}_{\eps,\xi}:=\int_M(V_{\eps,\xi}+8\pi H_\xi)\,dV_g,
\end{align}
so that
\begin{align*}
    \int_M \big( P_{\eps,\xi}- V_{\eps,\xi}-8\pi H_\xi
+\bar{c}_{\eps,\xi}\big)\,dV_g=0.
\end{align*}
Using \eqref{eq:bar-c_eps} and the fact that $\int_M G\,dV_g=0$, one can obtain the estimate
\begin{equation}\label{eq:bar-c_eps-est}
    |\bar{c}_{\eps,\xi}|\leq C \eps^2 \log\frac{1}{\eps},
\end{equation}
for a suitable $C>0$ and $\eps$ small.

Let us next consider geodesic normal coordinates centered at $\xi$; on radial functions $f=f(t)$ one has 
\begin{align*}
    \Delta_g f=f''+\frac{1}{t} f' +\p_t(\log\sqrt{\det g}) f'=:\Delta f+\Psi_g f',
\end{align*}
where the angular term $\Psi_g(t,\theta):=\p_t(\log\sqrt{\det g})$ satisfies
\begin{equation}\label{eq:Psi_g-est}
    |{\Psi_g(t,\theta)}|\leq C_g t,
\end{equation}
    for a suitable constant $C_g>0$ independent of $\xi$.
One computes
\begin{align*}
    \Delta_g V_{\eps,\xi}(t,\theta)=&-2\chi''(t)\log(\eps^2+t^2)-8\chi'(t)\frac{t}{\eps^2+t^2}-8\chi(t)\frac{1}{\eps^2+t^2}+8\chi(t)\frac{t^2}{(\eps^2+t^2)^2} \\
    &-\frac{2}{t}\chi'(t)\log(\eps^2+t^2)-2\Psi_g(t,\theta)\Big(\chi'(t)\log(\eps^2+t^2)+\chi(t)\frac{2t}{\eps^2+t^2}\Big),
\end{align*}
while, recalling Lemma \ref{green} and \eqref{eq:H-green-def} one has
\begin{align*}
    8\pi \Delta_g H_\xi=8\pi +4\chi''(t)\log t+8\chi'(t)\frac{1}{t}+4\chi'(t)\frac{\log t}{t}+4\Psi_g(t,\theta)\Big(\chi'(t)\log t +\frac{\chi(t)}{t}\Big).
\end{align*}
Coupling these estimates with \eqref{projectedbubble}
we get
\begin{align*}
    \Delta_g\big(P_{\eps,\xi}(x)- &V_{\eps,\xi}(x)-8\pi H_\xi(x)\big)=-\lambda_{\eps,\xi}\chi\frac{8\eps^2}{(\eps^2+t^2)^2}+8\chi\Big(\frac{1}{\eps^2+t^2}-\frac{t^2}{(\eps^2+t^2)^2}\Big) \\
    &+2\chi''\big(\log(\eps^2+t^2)-2\log t\big)+8\chi'\Big(\frac{t}{\eps^2+t^2}-\frac{1}{t}\Big)+2\frac{\chi'}{t}\big(\log(\eps^2+t^2)-2\log t\big) \\
    &+2 \Psi_g \Big[\chi'\big(\log(\eps^2+t^2)-2\log t\big)+\chi\Big(\frac{2t}{\eps^2+t^2}-\frac{2}{t}\Big)\Big].
\end{align*}
We next notice that $\chi'(t),\chi''(t)\not=0$ only if $t\in(r_0,2r_0)$ ($r_0$ fixed), and in such region 
\begin{align*}
  \log(\eps^2+t^2)-\log t^2=O_{r_0}(\eps^2), \qquad \frac{t}{\eps^2+t^2}-\frac{1}{t}=O_{r_0}(\eps^2), 
\end{align*}
therefore
\begin{align*}
    \Delta_g\big(P_{\eps,\xi}(x)- &V_{\eps,\xi}(x)-8\pi H_\xi(x)\big)=-\lambda_{\eps,\xi}\chi\frac{8\eps^2}{(\eps^2+t^2)^2}+\chi\frac{8\eps^2}{(\eps^2+t^2)^2} \\
    &+ O_{r_0}(\eps^2)+2 \Psi_g \chi \frac{-2\eps^2}{t(\eps^2+t^2)}.
\end{align*}
Thus, recalling \eqref{eq:lambda_eps-estimate}, \eqref{eq:Psi_g-est}, we finally obtain the pointwise estimate
\begin{equation*}
    \Big| \Delta_g\big(P_{\eps,\xi}(x)- V_{\eps,\xi}(x)-8\pi H_\xi(x)\big)\Big|\leq C\frac{\eps^2\log(1/\eps)}{\eps^2 + d_g(x,\xi)^2}, \quad \forall x\in M.
\end{equation*}
An integration then yields the integral estimate
\begin{align*}
    \|{\Delta_g\big(P_{\eps,\xi}(x)- V_{\eps,\xi}(x)-8\pi H_\xi(x)\big)}\|_{L^p(M)}\leq C_p \eps^{\frac{2}{p}}\log\frac{1}{\eps},
\end{align*}
for any $p>1$ and a suitable  $C_p>0$, and elliptic estimates together with \eqref{eq:bar-c_eps-est} provide
\begin{align*}
    \| P_{\eps,\xi}- V_{\eps,\xi}-8\pi H_\xi + \bar{c}_{\eps,\xi}\|_{W^{2,p}}\leq C_p \eps^{\frac{2}{p}}\log\frac{1}{\eps}.
\end{align*}
The estimate \eqref{eq:profile-estimate} then follows from the Sobolev embedding $W^{2,p}\to C^{0,\alpha}$ for a suitable choice of $p$ close enough to $1$. Finally, \eqref{eq:profile-est-conseq} is an immediate consequence of \eqref{eq:profile-estimate}.    
\end{proof}

\medskip

In the sequel, we let $\varepsilon_n=o(\delta_n)$, $\delta_n\downarrow0$, and
\[
 \xi_{1,n}=\exp_{x_0}\!\left(\frac{\delta_n}{2}e_n\right),
 \qquad
 \xi_{2,n}=\exp_{x_0}\!\left(-\frac{\delta_n}{2}e_n\right),
\]
for given \emph{unitary} vectors $e_n\in T_{x_0}M$ (which will be important to choose depending on $n$).
Set $P_{i,n}:=P_{\varepsilon_n,\xi_{i,n}}$, $W_n:=P_{1,n}+P_{2,n}$, and
\[
        Z_n:=\int_Mhe^{W_n}\dV.
\]
 We also set $d_{i,n}(x):=d_g(x,\xi_{i,n})$ and
\begin{align*}
    \rho_{i,n}:=\rho_{\eps_{n},\xi_{i,n}}, \qquad \lambda_{i,n}:=\lambda_{\eps_n,\xi_{i,n}}, \qquad B_{i,n}:=B^g_{\delta_n/2}(\xi_{i,n}), \qquad \Omega_n=M\setminus \big(B_{1,n} \cup B_{2,n}\big).
\end{align*}
The $B_{i,n}$'s are the \emph{core regions} of our double-bubble $W_n$, while $\Omega_n$ are the external regions.

\begin{lemma}\label{lem:core-exp}
    For  $x\in B_{1,n}$, let
    \begin{equation}\label{eq:Phi_n-def}
        \Phi_{1,n}(x):=\log h(x) + 8\pi\big( H_{\xi_{1,n}}(x)+ H_{\xi_{2,n}}(x)\big) -2\log(\eps_n^2 + d_{2,n}(x)^2).
    \end{equation}
    Then, for $n$ large enough, one has
    \begin{align}\label{eq:core-exp}
        h \, e^{W_n}=\frac{e^{\Phi_{1,n}}}{8\eps_n^2}\tilde{\rho}_{1,n}\big(1+ O(\eps_n^{3/2})\big) \qquad \text{in $B_{1,n}$.}
    \end{align}
    Moreover,
    \begin{align}\label{eq:Phi_n-lip-est}
        \big| \Phi_{1,n}(x)-\Phi_{1,n}(\xi_{1,n})\big|\leq C\frac{d_{1,n}(x)}{\delta_n} \qquad \text{in $B_{1,n}$.}
    \end{align}
    The same result holds with the indices $1$ and $2$ interchanged.
\end{lemma}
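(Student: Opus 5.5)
The plan is to expand $W_n=P_{1,n}+P_{2,n}$ in $B_{1,n}$ using Lemma \ref{lem:profile-estimate} for each projected bubble, and then compare the result with $\tilde\rho_{1,n}$. Since $r_0$ is fixed and $\delta_n\to0$, we have $B_{1,n}\subset B^g_{r_0}(\xi_{1,n})$ for $n$ large. There $\chi(d_{1,n})=1$, so $V_{\eps_n,\xi_{1,n}}=-2\log(\eps_n^2+d_{1,n}^2)$. Likewise $d_{2,n}\le 3\delta_n/2<r_0$ on $B_{1,n}$, so $V_{\eps_n,\xi_{2,n}}=-2\log(\eps_n^2+d_{2,n}^2)$ there. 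Applying \eqref{eq:profile-estimate} with $\beta=1/2$ to each bubble gives, uniformly on $B_{1,n}$,
\[
W_n=-2\log(\eps_n^2+d_{1,n}^2)-2\log(\eps_n^2+d_{2,n}^2)+8\pi\bigl(H_{\xi_{1,n}}+H_{\xi_{2,n}}\bigr)+O(\eps_n^{3/2}).
\]
Adding $\log h$, this reads $\log(he^{W_n})=-2\log(\eps_n^2+d_{1,n}^2)+\Phi_{1,n}+O(\eps_n^{3/2})$. On $B_{1,n}$ we have $\tilde\rho_{1,n}=8\eps_n^2(\eps_n^2+d_{1,n}^2)^{-2}$, so $(\eps_n^2+d_{1,n}^2)^{-2}=\tilde\rho_{1,n}/(8\eps_n^2)$. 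Exponentiating and using $e^{O(\eps_n^{3/2})}=1+O(\eps_n^{3/2})$ gives \eqref{eq:core-exp}.

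For \eqref{eq:Phi_n-lip-est}, I will show that $|\nabla_g\Phi_{1,n}|\le C/\delta_n$ on $B_{1,n}$, uniformly in $n$. The estimate then follows by integrating along the minimizing geodesic from $\xi_{1,n}$ to $x$, which stays inside the geodesic ball $B_{1,n}$ because $\delta_n$ is small compared with the injectivity radius. The terms $\log h$ (since $h\in C^1$, $h>0$) and $8\pi H_{\xi_{i,n}}$ have bounded gradients, hence gradients of size $O(1)\le C/\delta_n$.

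The bound on $8\pi H_{\xi_{i,n}}$ is the one point that needs justification. I would argue that $H(\cdot,\xi)$ is $C^{1}$ with gradient bounded uniformly in $\xi$. This is standard: by \eqref{eq:H-green-def}, $-\Delta_g H_\xi=-1+\Delta_g\bigl(\chi\frac1{2\pi}\log\frac1{d_g}\bigr)+\delta_\xi$. The Dirac masses cancel, and the remaining smooth-in-$x$ terms, by the computation of $\Delta_g H_\xi$ displayed in the proof of Lemma \ref{lem:profile-estimate}, are bounded by $C(1+|\Psi_g|/t)\le C$ thanks to \eqref{eq:Psi_g-est}. So $\Delta_g H_\xi\in L^\infty$ uniformly in $\xi$, and $W^{2,q}$ estimates with $q>2$ give uniform $C^1$ bounds.

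The last term satisfies $|\nabla_g\log(\eps_n^2+d_{2,n}^2)|\le 2d_{2,n}/(\eps_n^2+d_{2,n}^2)\le 2/d_{2,n}$. On $B_{1,n}$ we have $d_{2,n}\ge d_g(\xi_{1,n},\xi_{2,n})-\delta_n/2$. Since $\xi_{1,n},\xi_{2,n}$ lie on the same geodesic through $x_0$ at parameters $\pm\delta_n/2$, and $\delta_n$ is smaller than the injectivity radius, $d_g(\xi_{1,n},\xi_{2,n})=\delta_n$. Hence $d_{2,n}\ge\delta_n/2$ on $B_{1,n}$ and this gradient is at most $4/\delta_n$. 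Combining the bounds gives $|\nabla_g\Phi_{1,n}|\le C/\delta_n$, and so \eqref{eq:Phi_n-lip-est}. The case with indices interchanged is symmetric.

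The main obstacle is modest: all the error bounds must hold uniformly for the moving centers $\xi_{i,n}$. Lemma \ref{lem:profile-estimate} is uniform in $\xi$ by statement, so what remains is the uniform $C^1$ bound on $H$, which comes from the elliptic estimate above.
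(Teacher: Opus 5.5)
Your proposal is correct and follows the paper's proof. For \eqref{eq:core-exp} you apply Lemma \ref{lem:profile-estimate} with $\beta=1/2$ to both projected bubbles on $B_{1,n}$ (where both cut-offs equal $1$) and exponentiate, exactly as the paper does. For \eqref{eq:Phi_n-lip-est} your gradient bound integrated along a geodesic amounts to the paper's one-line ``triangle inequality'' argument, namely $|d_{2,n}(x)-d_{2,n}(\xi_{1,n})|\le d_{1,n}(x)$ and $d_{2,n}\ge\delta_n/2$ on $B_{1,n}$; the uniform $C^1$ bound on $H(\cdot,\xi)$ that you justify is a detail the paper leaves implicit.
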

\begin{proof}
    The estimate \eqref{eq:core-exp} follows immediately by applying the profile estimate \eqref{eq:profile-estimate} with $\beta=\frac{1}{2}$ to \emph{both} bubble profiles $P_{1,n}$ and $P_{2,n}$, yielding
    \begin{align*}
        h(x) \, e^{W_n(x)}=\frac{h(x)e^{8\pi( H_{\xi_{1,n}}(x)+ H_{\xi_{2,n}}(x))}e^{O(\eps_n^{3/2})}}{(\eps_n^2+ d_{1,n}(x)^2)^2(\eps_n^2+d_{2,n}(x)^2)^2},
    \end{align*}
    which is exactly \eqref{eq:core-exp} for $x\in B_{1,n}$.

    The second estimate follows instead from \eqref{eq:Phi_n-def} and the triangle inequality.
\end{proof}

\begin{lemma}\label{lem:outer-exp-Z-n}
    For $n$ large and a suitable $C>0$, one has
    \begin{align}\label{eq:outerint-est}
        \int_{\Omega_n}h \, e^{W_n}\,dV_g\leq\frac{C}{\delta_n^6}, \qquad \qquad \int_{\Omega_n}\rho_{i,n}\,dV_g\leq C\frac{\eps_n^2}{\delta_n^2}, \qquad i=1,2,
    \end{align}
    and 
    \begin{align}\label{eq:Z_n-est}
    \frac{1}{C \eps_n^2 \delta_n^4}\leq Z_n\leq \frac{C}{\eps_n^2 \delta_n^4}.    
    \end{align}
\end{lemma}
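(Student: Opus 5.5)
The plan is to use the pointwise two-sided bound \eqref{eq:profile-est-conseq} for each projected bubble, which gives, uniformly on $M$,
\[
h\,e^{W_n}\asymp \frac{1}{(\eps_n^2+d_{1,n}^2)^2(\eps_n^2+d_{2,n}^2)^2},
\]
since $h$ is bounded above and below by positive constants. All three estimates then reduce to integrating explicit radial weights in geodesic normal coordinates. I will use that $d_g(\xi_{1,n},\xi_{2,n})=\delta_n$ (for $n$ large, by construction along a geodesic through $x_0$) and the triangle inequality repeatedly.

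\emph{Outer region.} On $\Omega_n$ both $d_{1,n},d_{2,n}\ge\delta_n/2$. Set $d:=\min(d_{1,n},d_{2,n})$. If $d\le 2\delta_n$, the other distance is at most $3\delta_n$ and at least $\delta_n/2$, so both factors are of order $\delta_n^{-4}$ up to constants. Hence $h e^{W_n}\le C\delta_n^{-8}$ on the set $\{d\le 2\delta_n\}$, whose volume is $O(\delta_n^2)$, giving a contribution $C\delta_n^{-6}$. Where $d\ge2\delta_n$ we have $d_{1,n}\asymp d_{2,n}$, and then $h e^{W_n}\le C d_{1,n}^{-8}$. Integrating $t^{-8}\,t\,dt$ from $2\delta_n$ gives $C\delta_n^{-6}$ again, and the part of $M$ far away contributes $O(1)$. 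For $\rho_{i,n}$, Lemma \ref{lem:lambda_eps-estimate} gives $\rho_{i,n}\le C\eps_n^2 d_{i,n}^{-4}$ on $\Omega_n$ (the cut-off $\chi$ only helps). Integrating over $\{d_{i,n}\ge\delta_n/2\}$ yields $C\eps_n^2\int_{\delta_n/2}^\infty t^{-3}\,dt=C\eps_n^2\delta_n^{-2}$.

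\emph{Core regions and $Z_n$.} On $B_{1,n}$ we have $\delta_n/2\le d_{2,n}\le 3\delta_n/2$, so the second factor is $\asymp\delta_n^{-4}$ (using $\eps_n=o(\delta_n)$). Therefore
\[
\int_{B_{1,n}}he^{W_n}\asymp \delta_n^{-4}\int_{B_{\delta_n/2}^g(\xi_{1,n})}\frac{dV_g}{(\eps_n^2+d_{1,n}^2)^2}\asymp \delta_n^{-4}\eps_n^{-2}.
\]
The radial integral is $\int_0^{\delta_n/2}\frac{t\,dt}{(\eps_n^2+t^2)^2}$, which lies between $c\eps_n^{-2}$ and $C\eps_n^{-2}$ once $\delta_n/\eps_n\ge1$; the volume element is comparable to $t\,dt\,d\theta$. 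The same holds on $B_{1,n}$'s counterpart $B_{2,n}$. This gives the lower bound in \eqref{eq:Z_n-est} directly. For the upper bound, add the outer estimate: $\delta_n^{-6}=\delta_n^{-4}\delta_n^{-2}\le \delta_n^{-4}\eps_n^{-2}$, because $\eps_n\le\delta_n$.

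There is no serious obstacle here. The only care needed is checking that the constants in \eqref{eq:profile-est-conseq} are uniform in $\xi$ (they are, by Lemma \ref{lem:profile-estimate}). The other point is handling the comparison of $d_{1,n}$ and $d_{2,n}$ in the outer region near the midpoint, which the case split on $d\le2\delta_n$ settles.
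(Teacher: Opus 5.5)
Your proposal is correct and takes the same route as the paper. The paper simply states that all three bounds are direct consequences of the two-sided estimate \eqref{eq:profile-est-conseq}. Your case split on $\min(d_{1,n},d_{2,n})\le 2\delta_n$, your use of $d_g(\xi_{1,n},\xi_{2,n})=\delta_n$, and your explicit radial integrals supply exactly the computations the paper leaves out.
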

\begin{proof}
    All of these estimates are direct consequences of \eqref{eq:profile-est-conseq}.
\end{proof}

\begin{lemma}\label{lem:core-weights}
    Define, for $i=1,2$, the \emph{core weights}
    \begin{align}\label{eq:core-weights-def}
        \theta_{i,n}:=2\pi\frac{e^{\Phi_{i,n}(\xi_{i,n})}}{\eps_n^2 Z_n}, \qquad \eta_{i,n}:=\theta_{i,n}-1,
    \end{align}
    where $\Phi_{i,n}$ is defined in \eqref{eq:Phi_n-def}. Let us also set
    \begin{align}\label{eq:b_n-def}
        b_{i,n}:=h(\xi_{i,n}) e^{8\pi H(\xi_{i,n},\xi_{i,n})}.
    \end{align}
    Then it holds
\begin{align}\label{eq:core-weights-form}
    \frac{2}{Z_n}\int_{B_{i,n}}h \, e^{W_n}\,dV_g=\theta_{i,n}\Big( 1+ O\big(\frac{\eps_n}{\delta_n}\big)\Big).
\end{align}
    In particular,
    \begin{align}\label{eq:core-est}
        \theta_{i,n}=\frac{2 b_{i,n}}{b_{1,n}+b_{2,n}}+ O\Big(\frac{\eps_n}{\delta_n}\Big), \quad \text{and} \quad |\eta_{i,n}|\leq C\Big(\delta_n+\frac{\eps_n}{\delta_n}\Big).
    \end{align}
\end{lemma}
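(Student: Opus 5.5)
The plan is to compute $\int_{B_{i,n}}h\,e^{W_n}\,dV_g$ directly from the core expansion \eqref{eq:core-exp} of Lemma \ref{lem:core-exp}, and then to extract the limiting weights from the normalization of $Z_n$. I treat $i=1$; the case $i=2$ is symmetric.

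On $B_{1,n}$ we have $\chi\equiv1$, since $\delta_n/2<r_0$, so $\tilde\rho_{1,n}=8\eps_n^2(\eps_n^2+d_{1,n}^2)^{-2}$. Hence
\[
h\,e^{W_n}=e^{\Phi_{1,n}}(\eps_n^2+d_{1,n}^2)^{-2}\bigl(1+O(\eps_n^{3/2})\bigr).
\]
I will write $e^{\Phi_{1,n}(x)}=e^{\Phi_{1,n}(\xi_{1,n})}e^{\Phi_{1,n}(x)-\Phi_{1,n}(\xi_{1,n})}$ and use \eqref{eq:Phi_n-lip-est}. This bound is $O(1)$ on $B_{1,n}$, so $|e^{\Phi_{1,n}(x)-\Phi_{1,n}(\xi_{1,n})}-1|\le C d_{1,n}(x)/\delta_n$ there. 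Passing to geodesic polar coordinates around $\xi_{1,n}$, with $dV_g=t(1+O(t^2))\,dt\,d\theta$, the main term is
\[
2\pi\int_0^{\delta_n/2}\frac{t\,dt}{(\eps_n^2+t^2)^2}=\frac{\pi}{\eps_n^2}\bigl(1+O(\eps_n^2/\delta_n^2)\bigr).
\]
The Lipschitz error contributes
\[
\frac{C}{\delta_n}\int_0^{\delta_n/2}\frac{t^2\,dt}{(\eps_n^2+t^2)^2}\le \frac{C}{\delta_n\eps_n},
\]
which is a relative error $O(\eps_n/\delta_n)$. The metric correction gives a relative error of order $\eps_n^2\log(\delta_n/\eps_n)$, which is absorbed. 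Multiplying the result by $2/Z_n$ gives exactly $\theta_{1,n}(1+O(\eps_n/\delta_n))$ by the definition \eqref{eq:core-weights-def}, and this proves \eqref{eq:core-weights-form}.

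For \eqref{eq:core-est}, split $Z_n$ as the integral over $B_{1,n}$, plus the integral over $B_{2,n}$, plus the integral over $\Omega_n$. By \eqref{eq:outerint-est} and \eqref{eq:Z_n-est}, the $\Omega_n$ part divided by $Z_n$ is $O(\eps_n^2/\delta_n^2)$. Therefore
\[
2=\theta_{1,n}+\theta_{2,n}+O(\eps_n/\delta_n),
\]
where I use that each $\theta_{i,n}$ is bounded, which follows from \eqref{eq:core-weights-form} and positivity. It remains to show $\theta_{1,n}/\theta_{2,n}=b_{1,n}/b_{2,n}+O(\eps_n/\delta_n)$, that is, to compare $\Phi_{1,n}(\xi_{1,n})$ with $\Phi_{2,n}(\xi_{2,n})$. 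By \eqref{eq:Phi_n-def},
\[
\Phi_{1,n}(\xi_{1,n})=\log b_{1,n}+8\pi H(\xi_{2,n},\xi_{1,n})-2\log(\eps_n^2+\delta_n^2),
\]
using $d_g(\xi_{1,n},\xi_{2,n})=\delta_n$ along the geodesic through $x_0$. The expression for $\Phi_{2,n}(\xi_{2,n})$ is the same with indices swapped. By symmetry of $H$, the cross terms cancel exactly, so $\theta_{1,n}/\theta_{2,n}=b_{1,n}/b_{2,n}$ exactly. Solving the two relations gives the first formula in \eqref{eq:core-est}.

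Finally, $b_{i,n}=b(\xi_{i,n})$ with $b(\xi)=h(\xi)e^{8\pi H(\xi,\xi)}$. This $b$ is Lipschitz, since $h\in C^1$ and the diagonal of the regular part is smooth, so $b_{1,n}-b_{2,n}=O(\delta_n)$. Hence $2b_{i,n}/(b_{1,n}+b_{2,n})=1+O(\delta_n)$, which gives $|\eta_{i,n}|\le C(\delta_n+\eps_n/\delta_n)$.

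The main obstacle is the bookkeeping of the Lipschitz error against the $O(1)$ expansion. I must check that the profile error $O(\eps_n^{3/2})$ is indeed uniform on $B_{1,n}$, which is guaranteed by \eqref{eq:profile-estimate}. I must also check that the regularity of $H$ needed for the Lipschitz bound on $b$ near the diagonal holds, which I take as standard.
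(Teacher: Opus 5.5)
Your proposal is correct and follows essentially the same route as the paper. You expand via \eqref{eq:core-exp} and control $e^{\Phi_{1,n}(x)-\Phi_{1,n}(\xi_{1,n})}-1$ through \eqref{eq:Phi_n-lip-est} to get \eqref{eq:core-weights-form}, combine this with \eqref{eq:outerint-est}--\eqref{eq:Z_n-est} to get $\theta_{1,n}+\theta_{2,n}=2+O(\eps_n/\delta_n)$, use the exact ratio $\theta_{1,n}/\theta_{2,n}=b_{1,n}/b_{2,n}$ coming from the symmetry of $H$, and finish with the Lipschitz continuity of $x\mapsto h(x)e^{8\pi H(x,x)}$. Your explicit polar-coordinate bookkeeping, including the $O(\eps_n^2\log(\delta_n/\eps_n))$ metric correction that the paper absorbs without comment, is a harmless refinement.
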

\begin{proof}
    By applying \eqref{eq:core-exp}, we have
\begin{align*}
    \int_{B_{1,n}}h \, e^{W_n}\,dV_g=\big(1+O(\eps_n^{3/2})\big) \frac{e^{\Phi_{1,n}(\xi_{1,n})}}{8\eps_n^2}\int_{B_{1,n}}\tilde{\rho}_{1,n} e^{(\Phi_{1,n}(x)-\Phi_{1,n}(\xi_{1,n}))}\,dV_g.
\end{align*}
Moreover, \eqref{eq:Phi_n-lip-est} together with the elementary estimate $|e^s-1|\leq |s|e^{|s|}$ imply 
\begin{align*}
    \int_{B_{1,n}}\tilde{\rho}_{1,n}e^{(\Phi_{1,n}(x)-\Phi_{1,n}(\xi_{1,n}))}\,dV_g&\leq \int_{B_{1,n}}\tilde{\rho}_{1,n}\,dV_g+O(\delta_n^{-1})\int_{B_{1,n}}\tilde{\rho}_{1,n} d_{1,n}\,dV_g\\
    &=8\pi + O\Big(\frac{\eps_n^2}{\delta_n^2}\Big)+O\Big(\frac{\eps_n}{\delta_n}\Big).
\end{align*}
Hence, substituting above, we get
\begin{align*}
    \int_{B_{1,n}}h \, e^{W_n}\,dV_g=\pi\frac{e^{\Phi_{1,n}(\xi_{1,n})}}{\eps_n^2}\big(1+O(\eps_n/\delta_n)\big),
\end{align*}
and \eqref{eq:core-weights-form} immediately follows for $i=1$. Similarly, we can also prove \eqref{eq:core-weights-form} for $i=2$.

Coupling \eqref{eq:core-weights-form} with \eqref{eq:outerint-est}, \eqref{eq:Z_n-est} and noticing that \eqref{eq:core-weights-form} implies that $\theta_{i,n}\leq C$ uniformly, we obtain
\begin{align}\label{eq:theta_i-sum}
    2=\frac{2}{Z_n}\int_M h \, e^{W_n}\,dV_g=\theta_{1,n}+\theta_{2,n}+O\Big(\frac{\eps_n}{\delta_n}\Big).
\end{align}
Next, using the definitions \eqref{eq:Phi_n-def}, \eqref{eq:b_n-def} of $\Phi_{i,n}$ and $b_{i,n}$, together with the symmetry of $H(x,y)$, we have
\begin{align}\label{eq:theta_i-ratio}
    \frac{\theta_{1,n}}{\theta_{2,n}}=e^{\big(\Phi_{1,n}(\xi_{1,n})-\Phi_{2,n}(\xi_{2,n})\big)}=\frac{h(\xi_{1,n})e^{8\pi H(\xi_{1,n},\xi_{1,n})}}{h(\xi_{2,n})e^{8\pi H(\xi_{2,n},\xi_{2,n})}}=\frac{b_{1,n}}{b_{2,n}}.
\end{align}
From \eqref{eq:theta_i-sum} and \eqref{eq:theta_i-ratio} we get
\begin{align*}
    \theta_{2,n}\Big(\frac{b_{1,n}}{b_{2,n}}+1\Big)=2+O\Big(\frac{\eps_n}{\delta_n}\Big),
\end{align*}
    namely
    \begin{align*}
        \theta_{2,n}=\frac{2 b_{2,n}}{b_{1,n}+b_{2,n}}+O\Big(\frac{\eps_n}{\delta_n}\Big),
    \end{align*}
    which is the first equation of \eqref{eq:core-est}. Finally, using \eqref{eq:b_n-def} and the fact that the function $x\in M\mapsto h(x)e^{8\pi H(x,x)}$ is globally Lipschitz, we get
    \begin{align}\label{eq:eta_n-estimate}
        |\eta_{1,n}|\leq\Big|\frac{ b_{1,n}-b_{2,n}}{b_{1,n}+b_{2,n}}\Big|+O\Big(\frac{\eps_n}{\delta_n}\Big)\leq C\Big(\delta_n+\frac{\eps_n}{\delta_n}\Big),
    \end{align}
    which is the second estimate in \eqref{eq:core-est}.
\end{proof}
From \eqref{eq:eta_n-estimate} it is clear that one can obtain the better estimate $|\eta_{i,n}|\leq C\eps_n/\delta_n$ whenever $b_{1,n}=b_{2,n}$. This will indeed be needed for the proof of Theorem \ref{trmclusterexample}.
\begin{defin}\label{defbalance}
    In the above setting, we will call a configuration $(\xi_{1,n}),(\xi_{2,n})$ \emph{balanced} if $b_{1,n}=b_{2,n}$ for all $n\in \N$. In particular, Lemma \ref{lem:core-weights} implies that, in a balanced configuration, $|\theta_{1,n}-\theta_{2,n}|=O(\eps_n/\delta_n)$.
\end{defin}
It is easy to show that balanced configurations exist at any scale and around any point:
\begin{lemma}\label{lem:balanced-config}
    Let $x_0\in M$ and $0<\delta<\mathrm{inj}(M)$. There exists a unit vector $e\in T_{x_0}M$ such that the points $\xi_i:=\mathrm{exp}_{x_0}\Big(\pm\frac{\delta}{2}e\Big)$ satisfy $h(\xi_1)e^{8\pi H(\xi_1,\xi_1)}=h(\xi_2) e^{8\pi H(\xi_2,\xi_2)}$.
\end{lemma}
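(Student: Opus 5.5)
The plan is an intermediate value argument on the circle of unit directions. Let $\delta\in(0,\inj(M))$ be fixed, and consider the unit circle $S_{x_0}:=\{e\in T_{x_0}M:\ |e|_g=1\}$. Define
\[
f(e):=b\bigl(\exp_{x_0}(\tfrac{\delta}{2}e)\bigr)-b\bigl(\exp_{x_0}(-\tfrac{\delta}{2}e)\bigr),\qquad b(x):=h(x)e^{8\pi H(x,x)}.
\]
We need a zero of $f$ on $S_{x_0}$.

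First we check that $f$ is continuous. The exponential map is smooth on the ball of radius $\inj(M)$ in $T_{x_0}M$, and $\delta/2<\inj(M)$. The function $h$ is continuous by assumption. The diagonal $x\mapsto H(x,x)$ of the regular part of the Green function is continuous, indeed Lipschitz, as already used in the proof of Lemma~\ref{lem:core-weights}. This is the only analytic input. It follows from the standard fact that $H$ is continuous up to the diagonal: $G(x,y)+\frac{1}{2\pi}\log d_g(x,y)$ extends continuously (in fact $C^{1}$) across $x=y$ (see \cite{aubinbook}), while $\chi\equiv1$ near the diagonal.

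Second, $f$ is odd: $f(-e)=-f(e)$, since replacing $e$ by $-e$ simply swaps the two points. Now pick any $e_0\in S_{x_0}$. If $f(e_0)=0$ we are done. Otherwise $f(e_0)$ and $f(-e_0)$ have opposite signs. The circle $S_{x_0}$ is connected, so following a half-circle arc from $e_0$ to $-e_0$, the intermediate value theorem gives some $e$ on it with $f(e)=0$. That is exactly $h(\xi_1)e^{8\pi H(\xi_1,\xi_1)}=h(\xi_2)e^{8\pi H(\xi_2,\xi_2)}$.

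The only point needing care is the continuity of $x\mapsto H(x,x)$, i.e. of the Robin function. I would justify it by noting that near the diagonal, in normal coordinates, $H$ solves $-\Delta_{g,y}H(x,\cdot)=-1+\frac{1}{2\pi}\Delta_g\log d_g(x,\cdot)$. The right-hand side is bounded (indeed $O(1)$ by the expansion $\Psi_g=O(t)$ computed in the proof of Lemma~\ref{lem:profile-estimate}) and depends continuously on $x$. Elliptic estimates then give $H$ jointly continuous, hence so is its restriction to the diagonal. Everything else is elementary.
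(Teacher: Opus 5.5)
Your argument is correct and is essentially the paper's proof: the difference $f(e)$ is odd under $e\mapsto -e$, so the intermediate value theorem along a half-circle of directions gives a zero (the paper parametrizes $e(t)$ by angle and uses $\phi(t+\pi)=-\phi(t)$ on $[0,\pi]$). Your justification of the continuity of $x\mapsto H(x,x)$ supplements what the paper takes for granted; note that near the diagonal one actually has $-\Delta_{g,y}H(x,\cdot)=-1-\frac{1}{2\pi}\Delta_g\log d_g(x,\cdot)$ (pointwise part), but this sign slip is harmless since only the boundedness of the right-hand side is used.
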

\begin{proof}
    Let $e(t)$ be the unit vector of angle $t\in[0,2\pi]$ in $T_{x_0}M$, and set $\psi(x):=h(x)e^{8\pi H(x,x)}$ and $\phi(t):=\psi\big(\mathrm{exp}_{x_0}(\frac{\delta}{2}e(t))\big)-\psi\big(\mathrm{exp}_{x_0}(-\frac{\delta}{2}e(t))\big)$. Then $\phi$ is continuous and $e(t+\pi)=-e(t)$ implies $\phi(t+\pi)=-\phi(t)$. Hence $\phi$ vanishes somewhere in $[0,\pi]$.
\end{proof}

In the next lemma we estimate, for $p<2$, the $L^p$-difference between a weighted double-bubble and the Liouville nonlinearity associated to $W_n$. In the balanced case, this will turn out to be exactly the norm $\|R_n\|_{L^p(M)}$ in  Theorem \ref{trmclusterexample}.

\begin{lemma}\label{lem:twobub-L^p-est}
    Let $1\leq p<2$ and assume that $\delta_n=\eps_n^a$ with $0<a<\frac{2-p}{p}$. Then, letting $\theta_{i,n}$ be as in Lemma \ref{lem:core-weights}, one has
    \begin{align}\label{eq:twobub-L^p-est}
        \left\| \theta_{1,n}\rho_{1,n}+\theta_{2,n}\rho_{2,n}-16\pi\frac{h e^{W_n}}{Z_n}\right\|_{L^p(M)}\leq C_p\eps_n^{\frac{2-p}{p}-a},
    \end{align}
    for a suitable constant $C_p>0$. If moreover the configuration is balanced, then $\theta_{i,n}=1+O(\eps_n^{1-a})$ and
    \begin{align}\label{eq:twobub-L^p-balanced-est}
        \left\| \rho_{1,n}+\rho_{2,n}-16\pi\frac{h e^{W_n}}{Z_n}\right\|_{L^p(M)}\leq C_p\eps_n^{\frac{2-p}{p}-a}.
    \end{align}
\end{lemma}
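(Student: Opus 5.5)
We split $M$ into the two core regions $B_{1,n},B_{2,n}$ and the exterior $\Omega_n$, and estimate the $L^p$ norm of
\[
E_n:=\theta_{1,n}\rho_{1,n}+\theta_{2,n}\rho_{2,n}-16\pi\frac{he^{W_n}}{Z_n}
\]
separately on each piece. On $\Omega_n$ no cancellation is needed: each term is bounded pointwise via \eqref{eq:profile-est-conseq} and \eqref{eq:Z_n-est}. Indeed $\rho_{i,n}\le C\eps_n^2 d_{i,n}^{-4}$, and on $\Omega_n$ both $d_{i,n}\gtrsim\delta_n$ hold. For the nonlinearity,
\[
\frac{he^{W_n}}{Z_n}\le C\eps_n^2\delta_n^4\,(\eps_n^2+d_{1,n}^2)^{-2}(\eps_n^2+d_{2,n}^2)^{-2}.
\]
Near the pair this is bounded by $C\eps_n^2 d^{-4}$ with $d=\min_i d_{i,n}\gtrsim\delta_n$, and far away by $C\eps_n^2\delta_n^4 d^{-8}$. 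Integrating $(\eps_n^2 d^{-4})^p$ over $\{d\gtrsim\delta_n\}$ gives $C\eps_n^{2p}\delta_n^{2-4p}$. Hence the $L^p(\Omega_n)$ norm is $O(\eps_n^2\delta_n^{2/p-4})$, which is dominated by the claimed bound because $\eps_n\le\delta_n$ and $p<2$.

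On $B_{1,n}$ (the case of $B_{2,n}$ is symmetric), Lemma \ref{lem:core-exp} and the definition of $\theta_{1,n}$ give
\[
16\pi\frac{he^{W_n}}{Z_n}=2\pi\frac{e^{\Phi_{1,n}(\xi_{1,n})}}{\eps_n^2Z_n}\,\tilde\rho_{1,n}\,e^{\Phi_{1,n}-\Phi_{1,n}(\xi_{1,n})}\bigl(1+O(\eps_n^{3/2})\bigr)=\theta_{1,n}\tilde\rho_{1,n}\,e^{\Phi_{1,n}-\Phi_{1,n}(\xi_{1,n})}\bigl(1+O(\eps_n^{3/2})\bigr).
\]
Moreover $\rho_{1,n}=\lambda_{1,n}\tilde\rho_{1,n}$ with $|\lambda_{1,n}-1|=O(\eps_n^2\log(1/\eps_n))$ by Lemma \ref{lem:lambda_eps-estimate}. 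Therefore on $B_{1,n}$
\[
|E_n|\le\theta_{1,n}\tilde\rho_{1,n}\Bigl(\bigl|e^{\Phi_{1,n}-\Phi_{1,n}(\xi_{1,n})}-1\bigr|+O(\eps_n^{3/2})\Bigr)+\theta_{2,n}\rho_{2,n}.
\]
By \eqref{eq:Phi_n-lip-est} and $\theta_{1,n}\le C$, the main term is bounded by $C\tilde\rho_{1,n}d_{1,n}/\delta_n$.

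The key computation, and the main place where the exponent is decided, is the $L^p$ norm of this term:
\[
\int_{B_{1,n}}\Bigl(\frac{\eps_n^2 d_{1,n}}{\delta_n(\eps_n^2+d_{1,n}^2)^2}\Bigr)^p dV_g\lesssim\delta_n^{-p}\eps_n^{2p}\int_0^{\delta_n}\frac{t^{p+1}}{(\eps_n^2+t^2)^{2p}}\,dt.
\]
Substituting $t=\eps_n s$, the integral becomes $\eps_n^{2-3p}\int_0^{\delta_n/\eps_n}s^{p+1}(1+s^2)^{-2p}ds$. The integrand decays like $s^{1-3p}$, which is integrable at infinity since $p>1$ (for $p=1$ the tail produces only a logarithm, absorbed by the strict inequality in the exponent). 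So the $L^p$ norm is $\lesssim\delta_n^{-1}\eps_n^{2/p-1}=\eps_n^{(2-p)/p-a}$, which is exactly the claimed rate. The remaining terms on $B_{1,n}$ are smaller: $\|\tilde\rho_{1,n}\|_{L^p}\lesssim\eps_n^{2/p-2}$, so the $O(\eps_n^{3/2})$ correction contributes $O(\eps_n^{2/p-1/2})$, and $\rho_{2,n}\lesssim\eps_n^2\delta_n^{-4}$ on $B_{1,n}$ contributes $O(\eps_n^2\delta_n^{2/p-4})$. Both are $o(\eps_n^{(2-p)/p-a})$, using $a<(2-p)/p<1$. Summing the three regions yields \eqref{eq:twobub-L^p-est}.

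For the balanced case, the relation $b_{1,n}=b_{2,n}$ in \eqref{eq:core-est} gives $\theta_{i,n}=1+O(\eps_n/\delta_n)=1+O(\eps_n^{1-a})$. Hence $\|(\theta_{i,n}-1)\rho_{i,n}\|_{L^p}\lesssim\eps_n^{1-a}\eps_n^{2/p-2}=\eps_n^{(2-p)/p-a}$. Combining this with the triangle inequality and \eqref{eq:twobub-L^p-est} gives \eqref{eq:twobub-L^p-balanced-est}.

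I expect the core estimate to be the main obstacle. The first-order Lipschitz error in $\Phi_{1,n}$ must be tracked against the bubble weight with the right power: the bound $|e^s-1|\le|s|e^{|s|}$ is only usable because $d_{1,n}/\delta_n\le1$ on $B_{1,n}$, and it is this term that fixes the exponent $(2-p)/p-a$.
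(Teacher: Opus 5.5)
Your proposal is correct and follows essentially the same route as the paper. It uses the same splitting into $B_{1,n},B_{2,n},\Omega_n$, the same use of \eqref{eq:core-exp} and \eqref{eq:Phi_n-lip-est} to isolate the dominant term $\tilde\rho_{1,n}d_{1,n}/\delta_n$ (giving $\eps_n^{2-p-ap}$), the same pointwise bounds on $\Omega_n$, and the same $\eta_{i,n}\rho_{i,n}$ estimate in the balanced case.

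One small slip: at $p=1$ the tail $s^{1-3p}=s^{-2}$ is still integrable, since integrability only needs $p>2/3$. So no logarithm appears, which matters, because a logarithmic loss could not be absorbed into the sharp rate $\eps_n^{\frac{2-p}{p}-a}$ as stated.
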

\begin{proof}
    Define $\omega_n:=16\pi\frac{h \, e^{W_n}}{Z_n}-\theta_{1,n}\rho_{1,n}-\theta_{2,n}\rho_{2,n}$.
    Using \eqref{eq:core-exp} in $B_{1,n}$, one has
    \begin{align*}
        16\pi\frac{h(x) \, e^{W_n(x)}}{Z_n}&=2\pi\frac{e^{\Phi_{1,n}(\xi_{1,n})}}{\eps_n^2 Z_n}\tilde{\rho}_{1,n}(x)e^{\big(\Phi_{1,n}(x)-\Phi_{1,n}(\xi_{1,n})\big)}\big(1+O(\eps_n^{3/2})\big) \\
        &=\theta_{1,n}\tilde{\rho}_{1,n}(x)e^{\big(\Phi_{1,n}(x)-\Phi_{1,n}(\xi_{1,n})\big)}\big(1+O(\eps_n^{3/2})\big), \qquad \forall x\in B_{1,n}.
    \end{align*}
    Therefore, using \eqref{eq:lambda_eps-estimate} and arguing as in the proof of Lemma \ref{lem:core-weights}, we obtain the estimate
    \begin{align*}
        |\omega_n|&\leq \theta_{1,n}\tilde{\rho}_{1,n}C\big(\frac{d_{1,n}}{\delta_n}+\eps_n^{3/2}\big)+\theta_{2,n}{\rho}_{2,n} \\
        &\leq C \tilde{\rho}_{1,n}\big(\frac{d_{1,n}}{\delta_n}+\eps_n^{3/2}\big)+ C \rho_{2,n}, \qquad \text{in $B_{1,n}$.}
    \end{align*}
A similar estimate holds in $B_{2,n}$ with  indices $1$ and $2$ interchanged.
Using this, we compute
\begin{align*}
    \int_{B_{1,n}}|\omega_n|^p\,dV_g&\leq C_p\int_{B_{\delta_n/2}(0)}\frac{\eps_n^{2p}}{ (\eps_n^2+|x|^2)^{2p}}\Big(\frac{|x|^p}{\delta_n^p}+\eps_n^{\frac{3p}{2}}\Big)\,dx + C_p\int_{B_{\delta_n/2}(0)}\frac{\eps_n^{2p}}{\delta_n^{4p}}\,dx \\
    &\leq C_p \Big(\frac{\eps_n^{2p}}{\delta_n^p}\eps_n^{2-3p}+\eps_n^{{2p}+\frac{3}{2}p}\eps_n^{2-4p}+\frac{\eps_n^{2p}}{\delta_n^{4p-2}}\Big).
\end{align*}
Recalling $\delta_n=\eps_n^a$ and that $0<a<\frac{2-p}{p}$, one sees that the main term is the first one, so 
\begin{align}\label{eq:omega_n-Lpest-int}
    \int_{B_{1,n}}|\omega_n|^p\,dV_g\leq C_p \eps_n^{2-p-ap}.
\end{align}
The same estimate also holds on $B_{2,n}$.
On the other hand, in $\Omega_n=M\setminus(B_{1,n}\cup B_{2,n})$ we can use \eqref{eq:profile-est-conseq}, \eqref{eq:Z_n-est} to get
\begin{align*}
    |\omega_n|\leq\frac{C\eps_n^2\delta_n^4}{(\eps_n^2+d_{1,n}^2)^2(\eps_n^2+d_{2,n}^2)^2}+\frac{C\eps_n^2}{(\eps_n^2+d_{1,n}^2)^2}+\frac{C\eps_n^2}{(\eps_n^2+d_{2,n}^2)^2}.
\end{align*}
Using this inequality, one can estimate
\begin{align}
\notag
    \int_{\Omega_n}|\omega_n|^p\,dV_g&\leq C_p\Big[\int_{B^g_{2\delta_n}(x_0)}\frac{\eps_n^{2p}}{\delta_n^{4p}}\,dV_g + \int_{\Omega_n\setminus B^g_{2\delta_n}(x_0)}\Big|\frac{\eps_n^2 \delta_n^4}{d_g(\cdot,x_0)^8}+\frac{\eps_n^2}{d_g(\cdot,x_0)^4}\Big|^{p}\,dV_g\Big] \\
    \label{eq:omega_n-Lpest-exp}
    &\leq C_p \eps_n^{2p}\delta_n^{2-4p}=C_p \eps_n^{2(p+a(1-2p))}.
\end{align}
Being $a<1\leq p$, we easily see that $2p+2a(1-2p)>2-p-ap$, and \eqref{eq:omega_n-Lpest-int}-\eqref{eq:omega_n-Lpest-exp} imply
\begin{align}\label{eq:omega_n-Lp-est}
    \int_M |\omega_n|^p\,dV_g\leq C_p \eps_n^{2-p-ap},
\end{align}
which, recalling the definition of $\omega_n$ at the beginning of the proof, is exactly \eqref{eq:twobub-L^p-est}.

Now, if the configuration is balanced, i.e. $b_{1,n}=b_{2,n}$, then  \eqref{eq:core-est} implies that $\theta_{i,n}=1+O(\eps_n^{1-a})$ and so $|\eta_{i,n}|\leq C \eps_n^{1-a}$ for all $n\in\N$. Since
\begin{align*}
    16\pi \frac{h e^{W_n}}{Z_n}-\rho_{1,n}-\rho_{2,n}=\omega_n+\eta_{1,n}\rho_{1,n}+\eta_{2,n}\rho_{2,n},
\end{align*}
 it only remains to get an $L^p$ estimate on $\rho_{i,n}$, which follows from
\begin{align*}
    \int_M \rho_{i,n}^p\,dV_g\leq C_p\int_M \frac{\eps_n^{2p}}{(\eps_n^2+d_{i,n}^2)^{2p}}\,dV_g \leq C_p \eps_n^{2p}\int_0^1 \frac{r \,dr}{(\eps_n^2+r^2)^{2p}}\leq C_p \eps_n^{2-2p}.
\end{align*}
It follows that $\|\eta_{i,n} \rho_{i,n}\|_{L^p(M)}\leq C_p \eps_n^{1-a}\eps_n^{\frac{2-2p}{p}}=C_p\eps_n^{\frac{2-p}{p}-a}$, which, coupled with \eqref{eq:omega_n-Lp-est}, implies \eqref{eq:twobub-L^p-balanced-est}.
\end{proof}

\medskip

\begin{rmk}
    It is crucial to consider balanced sequences since we will actually need \eqref{eq:twobub-L^p-balanced-est} in the proof of Theorem \ref{trmclusterexample} below. At the same time, the balancing condition is not needed if we only care about the vanishing of the remainder term in $H^{-1}$, as it is possible to prove the following estimate on it:
     \begin{equation*}
  \left\|\rho_{\varepsilon_n,\xi_{1,n}}+\rho_{\varepsilon_n,\xi_{2,n}}
  -16\pi\frac{he^{W_n}}{Z_n}\right\|_{H^{-1}(M)}
  \le C\left(\delta_n+\frac{\varepsilon_n}{\delta_n}\right)
  \sqrt{\log\frac1{\varepsilon_n}}.
 \end{equation*}
\end{rmk}

\medskip

\begin{proof}[Proof of Theorem \ref{trmclusterexample}]
Fix $a\in(0,\frac{2-p}{p})$ as in Lemma \ref{lem:twobub-L^p-est} and choose
\[
        \eps_n=e^{-n}, \qquad \delta_n=\eps_n^a=e^{-an}.
\]
By Lemma \ref{lem:balanced-config} applied at $x_0$ and with $\delta=\delta_n$, we find unit vectors $e_n\in T_{x_0}M$ such that the points
\begin{align*}
    \xi_{1,n}=\mathrm{exp}_{x_0}\Big(\frac{\delta_n}{2}e_n\Big), \qquad  \xi_{2,n}=\mathrm{exp}_{x_0}\Big(-\frac{\delta_n}{2}e_n\Big)
\end{align*}
satisfy $b_{1,n}=b_{2,n}$ for all $n\in \N$, where $b_{i,n}$ are given by \eqref{eq:b_n-def}.
Define $W_n=P_{1,n}+P_{2,n}$ as above and set
\[
        u_n=W_n-\log Z_n.
\]
Then $\int_Mhe^{u_n}\dV=1$ and, by~\eqref{projectedbubble},
\[
 -\Delta_gu_n
 =\rho_{\varepsilon_n,\xi_{1,n}}+\rho_{\varepsilon_n,\xi_{2,n}}-16\pi.
\]
Hence
\[
 R_n=\rho_{\varepsilon_n,\xi_{1,n}}+\rho_{\varepsilon_n,\xi_{2,n}}
 -16\pi he^{u_n},
\]
which converges to zero in $L^p(M)$ by estimate \eqref{eq:twobub-L^p-balanced-est} of Lemma \ref{lem:twobub-L^p-est}.
The local expansion of Lemma \ref{lem:profile-estimate} shows that the scale $\varepsilon_n$ around each $\xi_{i,n}$ produces a standard Liouville profile of mass $8\pi$. Moreover
\[
 \frac{d_g(\xi_{1,n},\xi_{2,n})}{\varepsilon_n}
 =\frac{\delta_n}{\varepsilon_n}=\eps_n^{a-1}\to\infty,
 \qquad
 \xi_{1,n},\xi_{2,n}\to x_0.
\]
Thus the profiles are separated at their own scales but cluster at $x_0$. Since the total mass is $16\pi$, Theorem~\ref{trmmain} shows that no further profile occurs.

Finally, the convergence of $R_n$ to zero in $H^{-1}$ follows from the embedding of $L^p$ into $H^{-1}$ for all $p>1$.
\end{proof}

\begin{rmk}
The equations \eqref{PSbar} and \eqref{PSeq} only correspond to the vanishing of $J'_{\lambda_n}(u_n)$ as $n\to +\infty$. For the sequence $(u_n)$ constructed in Theorem \ref{trmclusterexample}, we have
\begin{align*}
J_{16\pi}(u_n)=32\pi \log \delta_n+O(1)=-32\pi a n+O(1)\to -\infty.
\end{align*}
In particular, here the divergence of $J_{16\pi}$ is caused by the clustering, as for two bubbles at fixed positive distance we would get instead $J_{16\pi}(u_n)=O(1)$.
It is an open question whether adding the condition $J_{\lambda_n}(u_n)\to c\in \R$ is sufficient to prevent clustering.
\end{rmk}

\begin{rmk}
   A natural question is whether arbitrary bubbling trees can occur. More precisely, whether, after a suitable generalization of the balancing condition of Definition \ref{defbalance}, every finite tree $\mathcal{T}$ can be realized by a blowing-up Palais--Smale sequence $(u_n)$.
\end{rmk}

\section{Bubble separation: Proof of Proposition \ref{propisolatedstrong}}\label{sec:isolation}


Assume that at least two sequences $(x_{i,n})$ and $(x_{j,n})$  have the same limit $a$. Let $\delta_n$ be the smallest distance between two such centers, and relabel the bubbles so that $i=1$, $j=2$ and $d_g(x_{1,n},x_{2,n})=\delta_n$, and set
\[
 J:=\{j\in\{1,\dots,\ell\}:d_g(x_{j,n},x_{1,n})=O(\delta_n)\}.
\]
Fix
an isothermal chart $\psi:U\to\Omega\subset\mathbb R^2$ around $a$, with $\psi(a)=0$ and $g=e^{2\varphi}|dx|^2$.
Up to replacing $\delta_n$ by an asymptotically equivalent sequence,
we may assume that
\[
 \delta_n=|\xi_{1,n}-\xi_{2,n}|,
 \qquad
 \xi_{j,n}:=\psi(x_{j,n}).
\]
Set
\[
 y_{j,n}:=\frac{\xi_{j,n}-\xi_{1,n}}{\delta_n},
 \qquad j\in J.
\]
After passing to a subsequence, $y_{j,n}\to y_j$, where the points
$y_j$ are pairwise distinct.

Define
\[
 F_n(y):=\xi_{1,n}+\delta_n y,
\]
and
\[
 \widehat u_n(y)
 :=
 u_n(\psi^{-1}(F_n(y)))+2\log\delta_n,
 \qquad
 \widehat v_n(y)
 :=
 v_n(\psi^{-1}(F_n(y)))+2\log\delta_n.
\]
Then
\begin{equation}\label{eq:f_n-clust-def}
 -\Delta\widehat v_n
 =
 \lambda_n \alpha_n e^{\widehat u_n}
 -\delta_n^2\lambda_n \beta_n
 =:f_n,
\end{equation}
where
\[
 \alpha_n(y)
 :=
 e^{2\varphi(F_n(y))}
 h(\psi^{-1}(F_n(y))),
 \qquad
 \beta_n(y):=e^{2\varphi(F_n(y))}.
\]
In particular, on every fixed bounded set,
\[
 \alpha_n\to \alpha_0:=e^{2\varphi(0)}h(a)>0,
 \qquad
 \|\nabla \alpha_n\|_{L^\infty}
 +\|\nabla \beta_n\|_{L^\infty}
 =O(\delta_n).
\]
By Proposition \ref{propclusterquant},
\begin{equation}\label{convehatu}
 \lambda_n \alpha_n e^{\widehat u_n}\,dy
 \rightharpoonup
 8\pi\sum_{j\in J}\delta_{y_j}.
\end{equation}
We next identify the limit of $\widehat v_n$ away from the points
$y_j$. Fix
\[
 y_*\in\mathbb R^2\setminus\{y_j:j\in J\}
\]
and set $c_n:=\widehat v_n(y_*)$. If
\[
 K\Subset\mathbb R^2\setminus\{y_j:j\in J\},
\]
then Proposition \ref{propgradient} gives
\begin{equation}\label{125}
 \|\nabla\widehat v_n\|_{L^\infty(K)}\le C_K.
\end{equation}
Moreover, the $L^q$ estimate in Proposition~\ref{propextraction} gives, for all fixed $q>2$, a uniform
$L^q(K)$ bound for $\alpha_ne^{\widehat u_n}$, whereas \eqref{convehatu} gives $\|\alpha_ne^{\widehat u_n}\|_{L^1(K)}\to0$.
Interpolation therefore yields
\begin{equation}\label{eq:u_nhat-Ls-conv}
 \alpha_ne^{\widehat u_n}\to0
 \quad\text{in }L^s(K)
\end{equation}
for every $1<s<q$. Choosing $s>2$, local elliptic estimates, \eqref{125},
and the normalization at $y_*$ imply, after extraction,
\begin{equation}\label{126}
 \widehat v_n-c_n\longrightarrow P
 \quad\text{in }C^{1,\gamma}_{\mathrm{loc}}
 \bigl(\mathbb R^2\setminus\{y_j:j\in J\}\bigr)
\end{equation}
for some $\gamma\in(0,1)$.

We now identify $P$. By the Green representation formula,
\begin{equation}\label{127}
 \nabla\widehat v_n(y)
 =
 \delta_n
 \int_M
 \nabla_xG\bigl(\psi^{-1}(F_n(y)),z\bigr)
 \lambda_nh(z)e^{u_n(z)}\,dV_g(z)
 +O(\delta_n).
\end{equation}
Using
\[
 \nabla_xG(x,z)
 =
 -\frac1{2\pi}\frac{x-z}{|x-z|^2}+O(1),\quad \text{as }x\to z,
\]
in the chosen coordinates, together with Proposition \ref{propclusterquant}, we obtain
\begin{equation}\label{128}
 \nabla P(y)
 =
 -4\sum_{j\in J}
 \frac{y-y_j}{|y-y_j|^2}.
\end{equation}
Indeed, let $\widehat\mu_n:=\lambda_n\alpha_ne^{\widehat u_n}\,dy$, so that $\widehat\mu_n\rightharpoonup8\pi\sum_{j\in J}\delta_{y_j}$ by \eqref{convehatu}. Since the
regular part of $G$ is bounded and the centers outside $J$ are at distance $\gg\delta_n$
from $x_{1,n}$, \eqref{127} gives, for every fixed
$y\in\R^2\setminus\{y_j:j\in J\}$,
\begin{equation*}
 \nabla\widehat v_n(y)
 =-\frac1{2\pi}\int_{\R^2}\frac{y-\eta}{|y-\eta|^2}\,d\widehat\mu_n(\eta)+O(\delta_n).
\end{equation*}
Fix $0<\varepsilon<\tfrac12\min_{j\in J}|y-y_j|$. 
On $B_\varepsilon(y)$, H\"older's inequality and
\eqref{eq:u_nhat-Ls-conv} give, with $s'=\frac{s}{s-1}<2$,
\begin{equation*}
 \int_{B_\varepsilon(y)}\frac{d\widehat\mu_n(\eta)}{|y-\eta|}
 \le\|\lambda_n\alpha_ne^{\widehat u_n}\|_{L^s(B_\varepsilon(y))}
 \bigl\||y-\cdot|^{-1}\bigr\|_{L^{s'}(B_\varepsilon(y))}=o(1),
\end{equation*}
so when we send $n\to+\infty$ and use \eqref{convehatu} we readily obtain \eqref{128}.
 As a consequence,
\begin{equation}\label{129}
 P(y)
 =
 4\sum_{j\in J}\log\frac1{|y-y_j|}+C.
\end{equation}
Fix now $j\in J$ and choose
$r>0$ so small that
\[
 B_{4r}(y_j)\cap\{y_k:k\in J,\ k\ne j\}=\emptyset.
\]
By \eqref{convehatu},
\[
 \int_{A_{r,2r}(y_j)}
 \lambda_n \alpha_ne^{\widehat u_n}\,dy\to0.
\]
Hence, applying Fubini's theorem, there exists
$\rho_n\in(r,2r)$ such that
\begin{equation}\label{130}
 \int_{\partial B_{\rho_n}(y_j)}
 \lambda_n\alpha_ne^{\widehat u_n}\,d\sigma\to0.
\end{equation}
After passing to a subsequence, $\rho_n\to\rho\in[r,2r]$.

Since $e^{u_n}\in L^q(M)$ $\forall q<+\infty$ by the Moser-Trudinger inequality, then $v_n\in W^{2,q}(M)$, which, together with \eqref{strongresidual}, imply $u_n\in L^{\infty}(M)$. Then \eqref{eq:f_n-clust-def} implies $\widehat{v}_n\in W^{2,q}_{\mathrm{loc}}(\R^2)$, therefore,
for a constant vector $X\in\mathbb R^2$, we may multiply \eqref{eq:f_n-clust-def} by $X\cdot \nabla \widehat{v}_n$ and integrate by parts (the LHS) on $B_{\rho_n}(y_j)$ to obtain the following Pohozaev-type identity:
\begin{equation}\label{131}
\int_{\partial B_{\rho_n}(y_j)}
 \left[
 \frac12|\nabla\widehat v_n|^2(X\cdot\nu)
 -(X\cdot\nabla\widehat v_n)
 \partial_\nu\widehat v_n
 \right]d\sigma                                     
 =
 \int_{B_{\rho_n}(y_j)}
 f_n\,X\cdot\nabla\widehat v_n\,dy.
\end{equation}
Since $w_n=u_n-v_n$, $\delta_n\to 0$ and \eqref{strongresidual} holds,
\begin{equation}\label{132}
 \|\nabla(\widehat u_n-\widehat v_n)\|_{L^\infty_{\mathrm{loc}}(\R^2)}
 \le C\delta_n\|\nabla w_n\|_{L^\infty(M)}\leq C\bar{C}\delta_n
 =o(1).
\end{equation}
The measures $|f_n|\,dy$ have uniformly bounded mass on fixed
balls; hence \eqref{132} allows us to replace
$\nabla\widehat v_n$ by $\nabla\widehat u_n$ in the right-hand side
of \eqref{131}, at the expense of an $o(1)$ error.

For the nonlinear term, writing $f_n$ as in \eqref{eq:f_n-clust-def}, integration by parts gives
\[
\begin{split}
 \int_{B_{\rho_n}(y_j)}
 \lambda_n\alpha_ne^{\widehat u_n}
 X\cdot\nabla\widehat u_n\,dy
 &=
 \int_{\partial B_{\rho_n}(y_j)}
 \lambda_n\alpha_ne^{\widehat u_n}X\cdot\nu\,d\sigma       \\
 &\quad
 -\int_{B_{\rho_n}(y_j)}
 \lambda_ne^{\widehat u_n}X\cdot\nabla \alpha_n\,dy.
\end{split}
\]
The first term tends to zero by \eqref{130}, while the second one is
$O(\delta_n)$, since $\|\nabla \alpha_n\|_{L^\infty}=O(\delta_n)$ and
the conformal volume is uniformly bounded.

Finally, Proposition \ref{propgradient} and \eqref{132} imply
\[
 \int_{B_{2r}(y_j)}|\nabla\widehat u_n|\,dy\le C_r.
\]
Therefore the contribution of
$-\delta_n^2\lambda_n\beta_n$ is $O(\delta_n^2)$. We conclude from \eqref{126},
\eqref{131} that
\begin{equation}\label{133}
 \int_{\partial B_\rho(y_j)}
 \left[
 \frac12|\nabla P|^2(X\cdot\nu)
 -(X\cdot\nabla P)\partial_\nu P
 \right]d\sigma=0.
\end{equation}

Let
\[
 P_j(y):=
 P(y)-4\log\frac1{|y-y_j|}.
\]
A direct expansion of the left-hand side of \eqref{133} as $\rho\downarrow0$ (the choice of $r$ above was arbitrary, so we can apply the same argument to a sequence $r_k\to 0$, which will produce radii $\rho_{k}\to0$ as above)
gives
\[
 8\pi\,X\cdot\nabla P_j(y_j)=0.
\]
Since $X$ is arbitrary,
\[
 \nabla P_j(y_j)=0.
\]
Using \eqref{129}, this is equivalent to
\[
 \sum_{\substack{k\in J\\k\ne j}}
 \frac{y_j-y_k}{|y_j-y_k|^2}=0,
 \qquad j\in J.
\]
Thus $(y_j)_{j\in J}$ would be a critical point of the logarithmic
interaction functional
\[
 \Xi((y_j)_{j\in J}):=\sum_{\substack{j,k\in J\\j<k}}
 \log\frac1{|y_j-y_k|},
\]
contrary to Lemma~\ref{lem:nocriticallog}. Hence the limiting
centers are pairwise distinct. \hfill $\square$

\medskip

The proof of the following lemma can be found in \cite[Lemma A.3]{MalchiodiMartinazziThizy2024}.

\begin{lemma}
\label{lem:nocriticallog}
Let $N\ge2$ and let $\beta_{ij}>0$ for $1\le i<j\le N$. Then
\[
 \Xi(z_1,\ldots,z_N)
 :=
 \sum_{1\le i<j\le N}
 \beta_{ij}\log\frac1{|z_i-z_j|}
\]
has no critical points on
\[
 (\mathbb R^2)^N\setminus
 \{(z_1,\ldots,z_N):z_i=z_j\text{ for some }i\ne j\}.
\]
\end{lemma}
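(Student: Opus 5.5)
The plan is to argue by contradiction and look for a balance law that forces one point to have nonzero net force. Suppose $(z_1,\dots,z_N)$ is a critical point of $\Xi$ off the diagonal, so that
\[
\sum_{j\ne i}\beta_{ij}\frac{z_i-z_j}{|z_i-z_j|^2}=0\qquad\text{for every } i,
\]
with $\beta_{ji}:=\beta_{ij}$. The natural first attempt is the dilation (Euler) identity. Since $\Xi(tz)=\Xi(z)-\log t\sum_{i<j}\beta_{ij}$, differentiating at $t=1$ gives
\[
\sum_i z_i\cdot\nabla_{z_i}\Xi=-\sum_{i<j}\beta_{ij}.
\]
At a critical point the left-hand side vanishes, while the right-hand side is strictly negative because all $\beta_{ij}>0$ and $N\ge2$. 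That is a contradiction.

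The key steps are as follows. Compute $\nabla_{z_i}\Xi=-\sum_{j\ne i}\beta_{ij}(z_i-z_j)/|z_i-z_j|^2$. Pair the terms $i<j$ in $\sum_i z_i\cdot\nabla_{z_i}\Xi$, which gives
\[
-\sum_{i<j}\beta_{ij}\frac{(z_i-z_j)\cdot(z_i-z_j)}{|z_i-z_j|^2}=-\sum_{i<j}\beta_{ij}.
\]
Then invoke criticality.

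Translation invariance alone, $\sum_i\nabla_{z_i}\Xi=0$, gives no information, so the scaling identity is the real content. There is no serious obstacle; the only point to check is the pairing computation, where symmetry of $\beta_{ij}$ is what lets the cross terms combine. This is exactly the form needed at the end of the proof of Proposition~\ref{propisolatedstrong}, with all $\beta_{ij}=1$ (or $16\pi$ after normalization).
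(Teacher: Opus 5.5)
Your proof is correct and complete. The identity $\sum_i z_i\cdot\nabla_{z_i}\Xi=-\sum_{i<j}\beta_{ij}$ holds at every point off the diagonal, since dilations by $t>0$ preserve that set. The pairing of the $(i,j)$ and $(j,i)$ terms works exactly as you compute once you set $\beta_{ji}:=\beta_{ij}$. Strict negativity then uses both $N\ge2$ and $\beta_{ij}>0$, so no critical point can exist.

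The paper does not prove this lemma itself; it defers to \cite[Lemma A.3]{MalchiodiMartinazziThizy2024}. Your two-line argument therefore makes the paper self-contained at this point.

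For comparison, another standard route looks at a single extremal point. Suppose $|z_1|=\max_i|z_i|$; then $z_1\neq0$, because the points are distinct and $N\ge2$. For every $j\ne1$ one has $z_1\cdot z_j\le|z_1||z_j|\le|z_1|^2$, with equality only if $z_j=z_1$, so $z_1\cdot(z_1-z_j)>0$. Hence already $z_1\cdot\nabla_{z_1}\Xi<0$.

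The two arguments differ in what they need. The extremal-point version shows that one specific equation of the critical-point system fails, namely the one at the outermost point. Yours sums all $N$ equations with weights $z_i$, requires no choice of point, and is the discrete counterpart of a dilation-type Pohozaev identity. Either one suffices for the application at the end of the proof of Proposition~\ref{propisolatedstrong}, where all $\beta_{ij}=1$.
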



\printbibliography[heading=bibintoc]

\Addresses

\end{document}